\newenvironment{tightlist}{\begin{list}{$\bullet$}
{\setlength{\itemsep}{0ex plus0.2ex}
\setlength{\topsep}{0ex plus0.2ex}}}
{\end{list}}
\DeclareMathOperator{\rk}{rk}
\DeclareMathOperator{\td}{td}  
\DeclareMathOperator{\ldim}{ldim}  
\DeclareMathOperator{\Qldim}{\ldim_\Q}  
\DeclareMathOperator{\Jac}{Jac}  
\newcommand{\restrict}[1]{\ensuremath{\!\!\upharpoonright_{#1}}}
\newcommand{\N}{\ensuremath{\mathbb{N}}}
\newcommand{\Z}{\ensuremath{\mathbb{Z}}}
\newcommand{\Q}{\ensuremath{\mathbb{Q}}}
\newcommand{\R}{\ensuremath{\mathbb{R}}}
\newcommand{\C}{\ensuremath{\mathbb{C}}}
\newcommand{\Rexp}{\ensuremath{\mathbb{R}_{\mathrm{exp}}}}
\newcommand{\Cexp}{\ensuremath{\mathbb{C}_{\mathrm{exp}}}}
\newcommand{\Kexp}{\ensuremath{K_{\mathrm{exp}}}}
\newcommand{\graph}{\ensuremath{\mathcal{G}}}
\newcommand{\ga}{\ensuremath{{\mathbb{G}_\mathrm{a}}}}   
\newcommand{\gm}{\ensuremath{{\mathbb{G}_\mathrm{m}}}}  
\newcommand{\HV}{\ensuremath{\mathcal{H}_V}} 
\newcommand{\ps}[2]{\ensuremath{#1[\![#2]\!]}}
\newcommand{\Laurent}[2]{\ensuremath{#1(\!(#2)\!)}} 
\newcommand{\cps}[2][\C]{\ensuremath{#1\{\!\{#2\}\!\}}}  
\renewcommand{\phi}{\varphi}
\renewcommand{\le}{\ensuremath{\leqslant}}
\renewcommand{\ge}{\ensuremath{\geqslant}}
\newcommand{\tuple}[1]{\ensuremath{\langle #1 \rangle}}
\newcommand{\class}[2]{\ensuremath{\left\{ #1 \,\left|\, #2 \right.\right\}}}
\newcommand{\iso}{\cong}
\newcommand{\into}{\hookrightarrow}
\newcommand{\onto}{\twoheadrightarrow}
\newcommand{\subs}{\subseteq} 
\newcommand{\nstrong}{\ensuremath{\not\kern-4pt\lhd\;}} 
\newcommand{\cross}{\ensuremath{\times}}
\newbox\noforkbox \newdimen\forklinewidth
\noforkbox\hbox{\lower 2pt\box1\lower
2pt\box0\relax}
\def\unionstick{\mathop{\copy\noforkbox}\limits}
\def\nonfork_#1{\unionstick_{\textstyle #1}}
\newbox\doesforkbox
\doesforkbox\hbox{\lower 2pt\box1 \lower
2pt\box2\lower2pt\box0\relax}
\def\nunionstick{\mathop{\copy\doesforkbox}\limits}
\def\fork_#1{\nunionstick_{\textstyle #1}}
\newcommand{\ra}[3]{\ensuremath{#1 \stackrel{#2}{\longrightarrow} #3}}
\newcommand{\leteq}{\mathrel{\mathop:}=}
\DeclareMathOperator{\Kldim}{K\mbox{-}ldim}
\DeclareMathOperator{\logD}{lD}
\newtheorem{prop}{Proposition}[section]
\newtheorem{theorem}[prop]{Theorem}
\newtheorem{conj}[prop]{Conjecture}
\theoremstyle{definition}
\newtheorem{defn}[prop]{Definition}
\title{Variants of Schanuel's conjecture}
\author{Jonathan Kirby}
\date{Version 2.0, \today}
\begin{document}

\maketitle

\subsection*{Introduction, January 2018}
This is a collection of variants of Schanuel's conjecture and the known
dependencies between them. It was originally written in 2007, and made available for a time on my webpage. I have been asked by a few people to make it available again and have taken the opportunity to make some minor revisions now in January 2018. The treatment is far from exhaustive of the literature, and for the most part consists of statements which were under discussion in the Oxford logic group between around 2002 and 2007. Many of these appeared in Boris Zilber's papers \cite{Zilber00fpe}, \cite{Zilber02esesc}, and \cite{Zilber03powers}. The general idea (although not much stressed in this article) is that Schanuel-type statements are obtained by counting the degrees of freedom in a system of equations, and that if this is negative then the system has no solutions.

Some parts of this article are now obsolete. The section on generic powers has been superseded by the paper \cite{BKW10}. Conjecture~\ref{SSC} is false but the issue of what part of Schanuel's conjecture is first-order expressible is discussed in \cite{ECFCIT}. The relationship between non-standard integer powers and the CIT is explained in chapter~6 of \cite{Bays_thesis}.

\subsection*{Conventions}
We adopt the convention that theorems are unconditionally proved
statements (about numbers, fields, exponential maps, etc,) and
propositions are (unconditionally proved) relationships between
conjectures. Most proofs are omitted. Algebraic varieties defined over
a subfield of the complex numbers $\C$ are identified with their
$\C$-points. Throughout the article we write $\td_\Q(x_1,\ldots,x_n)$
to mean the transcendence degree of the field extension
$\Q(x_1,\ldots,x_n)/\Q$, and $\Qldim(x_1,\ldots,x_n)$ to mean the
$\Q$-linear dimension of the $\Q$-vector space spanned by
$x_1,\ldots,x_n$.

\newpage

\tableofcontents

\section{Statements of Schanuel's conjecture}

We begin with Stephen Schanuel's original conjecture, in several
equivalent formulations.

\begin{conj}[Schanuel's conjecture (SC)]
  Let $a_1,\ldots,a_n$ be $\Q$-linearly independent complex
  numbers. Then $\td_\Q(a_1,e^{a_1},\ldots,a_n,e^{a_n}) \ge n$.
\end{conj}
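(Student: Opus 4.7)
The statement is Schanuel's conjecture itself, which remains open, so what follows is a discussion of natural strategies rather than a genuine proof sketch. The plan would be to consider three broad routes.

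The first route is classical transcendence theory. One would assume $a_1,\ldots,a_n$ are $\Q$-linearly independent and suppose for contradiction that $\td_\Q(a_1,e^{a_1},\ldots,a_n,e^{a_n}) < n$, so that there is a polynomial relation of degree and height one can try to control. The standard machinery then constructs an auxiliary polynomial $P \in \Z[X_1,Y_1,\ldots,X_n,Y_n]$ vanishing to high order on many $\Z$-linear combinations of the pairs $(a_i,e^{a_i})$, estimates $|P|$ analytically on a large disc in $\C$, and applies a zero estimate of Philippon--Nesterenko type together with a Liouville-style lower bound to reach a contradiction. This recovers Lindemann--Weierstrass for algebraic $a_i$, the Gel'fond--Schneider and Baker theorems, and in Nesterenko's hands the algebraic independence of $\pi$ and $e^\pi$; but nobody has found a variant giving a lower bound on transcendence degree that grows linearly in $n$ for arbitrary complex $a_i$.

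The second route is to imitate Ax's theorem, which proves the exact analogue of Schanuel's conjecture with $\C$ replaced by a differential field and $\exp$ by a formal exponential compatible with the derivation. I would look for an auxiliary structure on $\C$ playing the role of the derivation; the obvious candidate is complex conjugation, but no useful Schanuel-type inequality seems to emerge. The third route, closer in spirit to this article, is to try to deduce Schanuel's conjecture from a uniform or first-order statement about exponential fields in the style of \cite{Zilber00fpe}, combined with categoricity or completeness of $\Cexp$. The hardest step on every route is the same: producing any absolute lower bound on transcendence degree that grows with $n$, beyond the small-$n$ cases accessible to current transcendence methods. This is precisely why the conjecture is open, and why I would not attempt a proof but instead take SC as the starting point for the variants and implications developed in the rest of the paper.
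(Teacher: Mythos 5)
You are right not to attempt a proof: (SC) is an open conjecture, and the paper simply states it without proof, using it as the root statement from which the variants are derived. Your discussion of why the classical, Ax-type, and model-theoretic routes all stall is a reasonable survey, but it is not (and could not be) a proof, which matches the paper's treatment exactly.
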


\begin{conj}[(SC), version 2]
  Let $a_1,\ldots,a_n$ be complex numbers and suppose that
  $\td_\Q(a_1,e^{a_1},\ldots,a_n,e^{a_n}) < n$. Then there are
  integers $m_1,\ldots,m_n$, not all zero, such that $\sum_{i=1}^n
  m_ia_i = 0$.
\end{conj}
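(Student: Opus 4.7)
The plan is to prove the two versions equivalent by observing that they are contrapositives of one another, modulo the elementary fact that $\Q$-linear dependence of complex numbers is the same as the existence of a nontrivial $\Z$-linear relation.

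First I would prove the implication (SC) $\Rightarrow$ (SC, version 2). Assume (SC) and suppose $a_1,\ldots,a_n \in \C$ satisfy $\td_\Q(a_1,e^{a_1},\ldots,a_n,e^{a_n}) < n$. Then by the contrapositive of (SC), the $a_i$ fail to be $\Q$-linearly independent, so there are rationals $q_1,\ldots,q_n$, not all zero, with $\sum_{i=1}^n q_i a_i = 0$. Clearing denominators by multiplying through by a common multiple of the denominators of the $q_i$ produces integers $m_1,\ldots,m_n$, not all zero, with $\sum_{i=1}^n m_i a_i = 0$, as required.

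Next I would prove the reverse implication (SC, version 2) $\Rightarrow$ (SC). Assume version 2 and let $a_1,\ldots,a_n$ be $\Q$-linearly independent complex numbers. Suppose, for contradiction, that $\td_\Q(a_1,e^{a_1},\ldots,a_n,e^{a_n}) < n$. Then version 2 supplies integers $m_1,\ldots,m_n$, not all zero, with $\sum_{i=1}^n m_i a_i = 0$. Since the $m_i$ are in particular rational numbers, this contradicts the $\Q$-linear independence of the $a_i$.

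There is no substantial obstacle here; the only thing to verify is the elementary equivalence between $\Q$-linear dependence and the existence of a nontrivial $\Z$-linear relation, which amounts to clearing denominators. The two formulations are simply contrapositives rephrased using this equivalence.
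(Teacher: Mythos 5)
Your argument is correct and is exactly the (omitted) reasoning the paper relies on when it lists version 2 among the ``several equivalent formulations'' of Schanuel's conjecture: the two statements are contrapositives of each other once one notes that $\Q$-linear dependence of $a_1,\ldots,a_n$ is equivalent to the existence of a nontrivial $\Z$-linear relation among them, by clearing denominators. The paper gives no explicit proof of this equivalence, so there is nothing to compare beyond noting that your write-up supplies precisely the routine verification the paper leaves implicit.
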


We define a ``predimension function'' $\delta$ on $n$-tuples of
complex numbers by
\[\delta(a_1,\ldots,a_n) \leteq \td(a_1,e^{a_1},\ldots,a_n,e^{a_n}) -
\Qldim(a_1,\ldots,a_n).\] 
\begin{conj}[(SC), version 3]\label{predim version}
  Let $a_1,\ldots,a_n$ be complex numbers. Then
  $\delta(a_1,\ldots,a_n) \ge 0$.
\end{conj}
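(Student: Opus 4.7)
The plan is to show that this predimension formulation is equivalent to the original conjecture (SC), so that it inherits its (conjectural) status. The easy half is Conjecture~\ref{predim version} $\Rightarrow$ (SC): if $a_1, \ldots, a_n$ are $\Q$-linearly independent then $\Qldim(a_1, \ldots, a_n) = n$, and $\delta(a_1, \ldots, a_n) \ge 0$ immediately rearranges to $\td_\Q(a_1, e^{a_1}, \ldots, a_n, e^{a_n}) \ge n$. So I would concentrate on the converse, deducing $\delta \ge 0$ from (SC) in its original form.

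Given arbitrary complex numbers $a_1, \ldots, a_n$, I would set $d = \Qldim(a_1, \ldots, a_n)$ and extract a maximal $\Q$-linearly independent subset $b_1, \ldots, b_d$ of $\{a_1, \ldots, a_n\}$. Applying (SC) to the $b_j$'s gives $\td_\Q(b_1, e^{b_1}, \ldots, b_d, e^{b_d}) \ge d$, and since $\{b_1, \ldots, b_d\} \subseteq \{a_1, \ldots, a_n\}$ we trivially have $\td_\Q(a_1, e^{a_1}, \ldots, a_n, e^{a_n}) \ge \td_\Q(b_1, e^{b_1}, \ldots, b_d, e^{b_d}) \ge d$, which is exactly $\delta(a_1, \ldots, a_n) \ge 0$.

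The main point to check --- and the only real content --- is the algebraic step verifying that the transcendence degree of the full tuple coincides with that of the basis sub-tuple. Writing $a_i = \sum_j q_{ij} b_j$ with $q_{ij} \in \Q$ shows $a_i \in \Q(b_1, \ldots, b_d)$; clearing a common denominator $N$ gives $e^{Na_i} = \prod_j (e^{b_j})^{Nq_{ij}}$ with integer exponents, so $e^{a_i}$ is algebraic over $\Q(e^{b_1}, \ldots, e^{b_d})$. This is a routine check rather than a real obstacle, but it is the step at which one sees why $\delta$ is the right predimension: subtracting the $\Q$-linear dimension, rather than some other invariant of the $a_i$'s, is precisely what makes $\delta$ insensitive to appending $\Q$-linearly dependent elements, and this insensitivity is what allows the general statement to reduce to the $\Q$-linearly independent case of (SC).
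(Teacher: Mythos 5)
Your argument is correct, and since the paper presents this formulation as one of several equivalent versions of Schanuel's conjecture and explicitly omits most proofs, there is no internal proof to compare against. Your second paragraph is the complete argument: extract a $\Q$-basis $b_1,\ldots,b_d$ from among the $a_i$, apply (SC) to it, and use monotonicity of transcendence degree under inclusion of generators to conclude $\td_\Q(a_1,e^{a_1},\ldots,a_n,e^{a_n}) \ge d = \Qldim(a_1,\ldots,a_n)$. That is exactly the standard reduction.

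One quibble about your own framing. Your third paragraph describes the verification that $\td_\Q$ of the full tuple \emph{equals} $\td_\Q$ of the basis sub-tuple as ``the main point to check'' and ``the only real content.'' It is neither: your second paragraph already closes the argument using only the trivial inequality $\td_\Q(a_1,e^{a_1},\ldots,a_n,e^{a_n}) \ge \td_\Q(b_1,e^{b_1},\ldots,b_d,e^{b_d})$, which is just monotonicity and requires no algebra about $a_i = \sum_j q_{ij}b_j$ or $e^{Na_i}=\prod_j(e^{b_j})^{Nq_{ij}}$. The equality is true and is a good sanity check --- it confirms that the full tuple carries no transcendence beyond the sub-tuple, which is the conceptual reason $\delta$ is insensitive to appending $\Q$-dependent elements --- but it plays no logical role in your proof. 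You should soften that sentence so it doesn't read as though there is a gap in paragraph two that paragraph three fills.
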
 

The predimension  function can be relativised. For a subset $A \subs \C$, we can define 
\[\delta(a_1,\ldots,a_n/A) \leteq \td(a_1,e^{a_1},\ldots,a_n,e^{a_n}/A,\exp(A)) -
\Qldim(a_1,\ldots,a_n/A).\] 
For example, we can take $A = \ker(\exp) = 2 \pi i \Z$, and that gives a version of Schanuel's conjecture ``over the kernel''.

\begin{conj}[SC over the kernel]
 Let $a_1,\ldots,a_n$ be complex numbers. Then
  $\delta(a_1,\ldots,a_n/\ker(\exp)) \ge 0$.
\end{conj}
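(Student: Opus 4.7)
The plan is to deduce this from plain Schanuel's conjecture (SC) by enlarging the tuple with $2\pi i$ and then exploiting the identity $e^{2\pi i} = 1$. Unwinding the definitions with $A = \ker(\exp) = 2\pi i \Z$, we have $\exp(A) = \{1\}$, so $\Q(A,\exp(A)) = \Q(2\pi i)$, and $\Qldim(a_1,\ldots,a_n/A)$ is the dimension of the image of the $a_i$ in the quotient $\C / \Q \cdot 2\pi i$. Thus the statement to be proved is
\[
\td\bigl(a_1,e^{a_1},\ldots,a_n,e^{a_n}\,/\,2\pi i\bigr) \;\ge\; \Qldim(a_1,\ldots,a_n\,/\,2\pi i).
\]

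First I would reduce to a $\Q$-linearly independent subset modulo $2\pi i$. Pick a reindexed subtuple $a_1,\ldots,a_k$ that is maximally $\Q$-linearly independent modulo $\Q\cdot 2\pi i$, so that $k = \Qldim(a_1,\ldots,a_n/2\pi i)$. Each remaining $a_j$ satisfies a relation $N a_j = N q_j \cdot 2\pi i + \sum_{i=1}^k N r_{ij} a_i$ with all coefficients integers (after clearing denominators). Consequently $a_j$ lies in $\Q(2\pi i, a_1,\ldots,a_k)$, and exponentiating gives $e^{Na_j} = \prod_i (e^{a_i})^{N r_{ij}}$, so $e^{a_j}$ is algebraic over $\Q(e^{a_1},\ldots,e^{a_k})$. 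Hence
\[
\td\bigl(a_1,e^{a_1},\ldots,a_n,e^{a_n}\,/\,2\pi i\bigr) \;=\; \td\bigl(a_1,e^{a_1},\ldots,a_k,e^{a_k}\,/\,2\pi i\bigr).
\]

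Next I would apply (SC) to the $(k+1)$-tuple $(2\pi i, a_1, \ldots, a_k)$, which is genuinely $\Q$-linearly independent by the choice of $a_1,\ldots,a_k$. This yields
\[
\td\bigl(2\pi i, e^{2\pi i}, a_1, e^{a_1},\ldots, a_k, e^{a_k}\bigr) \;\ge\; k+1,
\]
and since $e^{2\pi i} = 1$ contributes nothing, the left-hand side equals $\td(2\pi i, a_1, e^{a_1},\ldots, a_k, e^{a_k})$. Subtracting $\td(2\pi i) = 1$ gives $\td(a_1,e^{a_1},\ldots,a_k,e^{a_k}/2\pi i) \ge k$, and combining with the equality from the previous step we obtain the desired inequality.

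There is no real obstacle here; the argument is routine once one sees that the fudge factor in the relative predimension is exactly paid for by the extra transcendence degree $1$ coming from $2\pi i$ in (SC), balanced against the extra dimension $1$ in $\Qldim$. The one spot requiring mild care is the reduction step: one must verify that after clearing denominators the $e^{a_j}$ for $j>k$ really are algebraic over $\Q(2\pi i, e^{a_1},\ldots,e^{a_k})$, which uses crucially that $e^{m \cdot 2\pi i}=1$ for integer $m$. The same template, applied to any $\Q$-linearly independent generating set of $A$, should yield the analogous relative version of (SC) over any set $A$ on which Schanuel's conjecture is known.
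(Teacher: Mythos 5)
Your argument is correct, and it is the natural (and surely the intended) one; the paper states the proposition (SC) $\implies$ (SC over the kernel) $\implies$ (Weak SC) without proof, so there is no written argument to compare against. Your bookkeeping is right: appending $2\pi i$ to a subtuple that is maximally $\Q$-linearly independent modulo $\Q\cdot 2\pi i$ costs exactly one in $\Qldim$, and is repaid by the tower formula for $\td$ once one observes that $e^{2\pi i}=1$ contributes nothing and $\td_\Q(2\pi i)=1$; the reduction to the subtuple uses precisely that integer multiples of $2\pi i$ exponentiate to $1$, so each discarded $e^{a_j}$ is algebraic over $\Q(e^{a_1},\ldots,e^{a_k})$. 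One could tidy the statement that ``one $N$'' clears denominators by taking a common multiple over all $j>k$, but this is cosmetic. Your closing remark that the template extends to a general $A$ is fine for $A$ whose $\Q$-span is finite-dimensional and on which (SC) holds with exact equality accounting for $\exp(A)$; for the cyclic kernel $2\pi i\Z$ the span is one-dimensional and the accounting is trivial, which is exactly what you used.
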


There is a weaker version of (SC) which also ignores the kernel.
\begin{conj}[(Weak SC)]
  Let $a_1,\ldots,a_n$ be complex numbers and suppose that
  $\td_\Q(a_1,e^{a_1},\ldots,a_n,e^{a_n}) < n$. Then there are
  integers $m_1,\ldots,m_n$, not all zero, such that $\prod_{i=1}^n
  e^{m_ia_i} = 1$.
\end{conj}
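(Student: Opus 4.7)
My plan is to derive (Weak SC) as a one-line consequence of (SC), since its conclusion is strictly weaker. Given $a_1,\ldots,a_n \in \C$ with $\td_\Q(a_1, e^{a_1}, \ldots, a_n, e^{a_n}) < n$, I will invoke (SC) version 2 to produce integers $m_1, \ldots, m_n$, not all zero, with $\sum_{i=1}^n m_i a_i = 0$. Applying $\exp$ to both sides and using its homomorphism property then yields $\prod_{i=1}^n e^{m_i a_i} = e^0 = 1$, which is exactly the conclusion required.

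Before carrying this out I would spend a moment unpacking the target statement: $\prod e^{m_i a_i} = 1$ is equivalent to $\sum m_i a_i \in \ker(\exp) = 2\pi i \Z$, and hence to the $\Q$-linear dependence of the extended tuple $(a_1, \ldots, a_n, 2\pi i)$. This reformulation makes transparent why (Weak SC) is genuinely weaker than (SC): it permits $2\pi i$ to enter the linear relation at no transcendence-degree cost. In particular, the derivation from (SC) presents no obstacle, and is essentially tautological.

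The real difficulty, naturally, lies in proving (Weak SC) unconditionally rather than just deriving it from (SC). Any direct attack would have to exploit precisely the weakening: we are asking only for a multiplicative relation, phrased entirely in terms of $\exp$ without reference to its kernel. This might admit techniques that treat the kernel as a defect from the outset, or that proceed purely in the multiplicative group $\gm(\C)$, and so are unavailable for the full (SC). I expect the hard part to be in finding such a method that stops short of pinning down the transcendence properties of kernel elements — for if one succeeded in doing the latter, one would simply recover (SC).
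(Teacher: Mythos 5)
The statement you are addressing is a conjecture, and the paper does not (and cannot) prove it unconditionally; following the paper's stated convention of omitting most proofs, it also gives no argument for the proposition that (SC) $\implies$ (SC over the kernel) $\implies$ (Weak SC). You correctly recognize this distinction, and what you actually supply is a proof of the implication (SC) $\implies$ (Weak SC), which is sound: (SC) version 2 produces $m_1,\ldots,m_n \in \Z$, not all zero, with $\sum_{i=1}^n m_i a_i = 0$, and exponentiating gives $\prod_{i=1}^n e^{m_i a_i} = 1$. The paper's proposition routes through (SC over the kernel) as an intermediate step, which is a natural factoring — one first weakens $\sum m_i a_i = 0$ to $\sum m_i a_i \in 2\pi i\Z$, and then exponentiates — but your direct passage from (SC) to (Weak SC) is equally valid and shorter. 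Your reformulation of the conclusion as $\sum m_i a_i \in \ker(\exp) = 2\pi i\Z$, equivalently a nontrivial $\Q$-linear dependence among $(a_1,\ldots,a_n,2\pi i)$, is correct (the coefficient of $2\pi i$ cannot be the only nonzero one since $2\pi i \ne 0$), and it explains cleanly both why (Weak SC) is weaker than (SC) and why, as the paper notes, an algebraic dependence between $e$ and $\pi$ would separate (Weak SC) from (SC over the kernel). Your concluding remarks about the difficulty of an unconditional proof are reasonable commentary but, appropriately, not part of the argument.
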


\begin{prop}
  (SC) $\implies$ (SC over the kernel) $\implies$ (Weak SC).
\end{prop}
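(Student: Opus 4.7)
The plan is to treat the two implications separately, noting that each reduces to routine bookkeeping once $2\pi i$ is brought into play through the two facts $e^{2\pi i}=1$ and $2\pi i$ is transcendental over $\Q$.

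For (SC) $\Rightarrow$ (SC over the kernel), I would apply Conjecture~\ref{predim version} to the augmented tuple $(a_1,\ldots,a_n,2\pi i)$. The target identity is
\[ \delta(a_1,\ldots,a_n,2\pi i) \;=\; \delta(a_1,\ldots,a_n/\ker(\exp)). \]
On the transcendence side, since $e^{2\pi i}=1$ and $2\pi i$ is transcendental over $\Q$,
\[ \td(\bar a, e^{\bar a}, 2\pi i, e^{2\pi i}) \;=\; \td(\bar a, e^{\bar a}, 2\pi i) \;=\; \td(\bar a, e^{\bar a}/2\pi i) + 1. \]
On the linear side, $\ker(\exp)=2\pi i\,\Z$ spans the line $2\pi i\,\Q$ over $\Q$, and $2\pi i\neq 0$, so
\[ \Qldim(\bar a, 2\pi i) \;=\; \Qldim(\bar a/2\pi i\cdot\Q) + 1 \;=\; \Qldim(\bar a/\ker(\exp)) + 1. \]
Subtracting the two lines gives the identity, and (SC) applied to $(\bar a, 2\pi i)$ then yields $\delta(\bar a/\ker(\exp))\ge 0$.

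For (SC over the kernel) $\Rightarrow$ (Weak SC), suppose $\td(\bar a, e^{\bar a}) < n$. Relativised transcendence degrees only decrease when one adds base parameters, so
\[ \td(\bar a, e^{\bar a}/2\pi i) \;\le\; \td(\bar a, e^{\bar a}) \;<\; n. \]
Applying (SC over the kernel) to $\bar a$ rearranges to $\Qldim(\bar a/\ker(\exp)) \le \td(\bar a, e^{\bar a}/2\pi i) < n$, so the $a_i$ are $\Q$-linearly dependent modulo $\Q\cdot 2\pi i$. Clearing denominators produces integers $m_1,\ldots,m_n$, not all zero, and an integer $k$ with $\sum m_i a_i = 2\pi i\cdot k \in \ker(\exp)$, equivalently $\prod e^{m_ia_i}=1$.

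There is no serious obstacle here; the whole content lies in choosing $2\pi i$ as an auxiliary parameter and matching definitions. The one point that deserves care is fixing the convention for $\Qldim(\bar a/A)$ when $A$ is merely a $\Z$-submodule of $\C$: one takes the $\Q$-linear dimension of the quotient by the $\Q$-span of $A$, and with this convention the two computations above go through as written.
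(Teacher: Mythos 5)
The paper states this proposition without proof (it is one of the ``most proofs are omitted'' cases), so there is no official argument to compare against; your proposal fills the gap correctly. Both implications go through: the identity $\delta(\bar a,2\pi i)=\delta(\bar a/\ker\exp)$ is verified correctly using $e^{2\pi i}=1$, the transcendence of $2\pi i$, and the dimension formula for the $\Q$-span modulo $\Q\cdot 2\pi i$; and the second implication correctly extracts an integer relation $\sum m_i a_i\in 2\pi i\,\Z$ from the $\Q$-linear dependence modulo $\Q\cdot2\pi i$ (your remark about also clearing the denominator of the coefficient of $2\pi i$ is the right point of care). Your route via the auxiliary parameter $2\pi i$ is exactly the mechanism the author alludes to immediately afterward when discussing the converse ``because $2\pi i$ is transcendental, and the kernel is a cyclic group,'' so your argument matches the intended approach.

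Two small observations worth keeping in mind. First, your displayed identity is stated over the field generated by $2\pi i$; strictly, the definition in the paper relativises to $A,\exp(A)=2\pi i\,\Z,\{1\}$, but since $\Q(2\pi i\,\Z,1)=\Q(2\pi i)$ this makes no difference, and it is worth noting you have silently used this. Second, for (SC over the kernel) $\Rightarrow$ (Weak SC) you invoke the monotonicity $\td(\bar a,e^{\bar a}/2\pi i)\le\td(\bar a,e^{\bar a})$; this is correct, but one could equally observe that (Weak SC) is exactly the contrapositive reformulation of the inequality $\Qldim(\bar a/\ker)\le\td(\bar a,e^{\bar a}/2\pi i)$ combined with this monotonicity, which makes clear the implication costs nothing and that (Weak SC) genuinely discards information (the $+1$ from $\td(2\pi i)$) compared to (SC over the kernel), consistent with the paper's remark that the converse fails.
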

The converse implication (Weak SC) $\implies$ (SC over the kernel) is false. For example, if $e$ and $\pi$ were algebraically dependent, but Schanuel's conjecture had no other counterexamples, then (Weak SC) would be true but $\delta(1/\ker) = -1$.

The implication (SC over the kernel) $\implies$ (SC) is true, because $2\pi i$ is transcendental, and the kernel is a cyclic group. However the equivalent implication is not necessarily true for other exponential fields.

\subsection*{Geometric statements}

In a different direction, we can use the fact that the transcendence
degree of an $m$-tuple of complex numbers is smaller than $n$ iff the
tuple lies in an algebraic variety of dimension less than $n$.
\begin{conj}[(SC), version 4]\label{variety version}
  Let $a_1,\ldots,a_n$ be complex numbers, and suppose that the
  $2n$-tuple $(a_1,e^{a_1},\ldots, a_n, e^{a_n})$ lies in an algebraic
  subvariety $V$ of $\C^{2n}$ which is defined over $\Q$ and of
  dimension strictly less than $n$. Then there are integers
  $m_1,\ldots,m_n$, not all zero, such that $\sum_{i=1}^n m_ia_i = 0$.
\end{conj}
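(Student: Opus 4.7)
The plan is to show that this statement is equivalent to version 2 of Schanuel's conjecture; it is presented as a reformulation rather than a genuinely new claim, and the intervening remark about transcendence degree and dimension of varieties telegraphs the argument. The link is the standard algebraic-geometric fact that for a tuple $\bar{a} \in \C^m$, the transcendence degree $\td_\Q(\bar{a})$ equals the dimension of the smallest $\Q$-variety containing $\bar{a}$, namely its Zariski closure over $\Q$.

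First I would verify the forward direction (version 2 $\implies$ version 4). Suppose $(a_1,e^{a_1},\ldots,a_n,e^{a_n}) \in V$ with $V$ a $\Q$-subvariety of $\C^{2n}$ of dimension $< n$. Since any point of a variety defined over $\Q$ has transcendence degree over $\Q$ at most the dimension of the variety (the point factors through a generic point of some irreducible component of $V$, whose residue field has transcendence degree equal to the dimension of that component), we get $\td_\Q(a_1,e^{a_1},\ldots,a_n,e^{a_n}) < n$. Apply version 2 to obtain the required integers $m_i$.

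For the reverse direction (version 4 $\implies$ version 2), suppose $\td_\Q(a_1,e^{a_1},\ldots,a_n,e^{a_n}) = d < n$. Take $V$ to be the Zariski closure over $\Q$ of the tuple, i.e.\ the vanishing locus in $\C^{2n}$ of the ideal of polynomials in $\Q[X_1,Y_1,\ldots,X_n,Y_n]$ that vanish on the tuple. This $V$ is defined over $\Q$, irreducible, and of dimension exactly $d < n$ (its function field is $\Q(a_1,e^{a_1},\ldots,a_n,e^{a_n})$, which has transcendence degree $d$ over $\Q$). The tuple lies in $V$ by construction, so version 4 yields the $m_i$.

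There is no serious obstacle: the only thing to be careful about is that the variety $V$ in version 4 is required to be defined over $\Q$, and this is precisely what is supplied by taking the $\Q$-Zariski closure rather than some larger field. A minor variant one could also discuss is whether the ``defined over $\Q$'' hypothesis can be relaxed to ``defined over $\overline{\Q}$'' or even over $\C$; the first is equivalent (replace $V$ by the union of its Galois conjugates, still of the same dimension), while the last is not, since arbitrary $\C$-varieties of dimension $< n$ through the tuple can always be arranged trivially.
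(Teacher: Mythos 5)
Your argument is correct and takes the same approach the paper indicates: the paper simply records the guiding fact that ``the transcendence degree of an $m$-tuple of complex numbers is smaller than $n$ iff the tuple lies in an algebraic variety of dimension less than $n$,'' and you have correctly expanded this into both directions, including the necessary care about the variety being defined over $\Q$ (via the $\Q$-Zariski closure) and the aside about $\overline{\Q}$ versus $\C$.
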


Continuing in this direction, we can find a more geometric statement.
Write $\gm(\C)$ for the multiplicative group of the complex numbers,
and $\ga(\C)$ for its additive group. $\ga(\C)$ is just $\C$, and is
naturally identified with the universal covering space of $\gm(\C)$,
and the exponential map is the covering map
$\ra{\ga(\C)}{\exp}{\gm(\C)}$.

$\ga(\C)$ is also naturally identified with tangent space of $\gm(\C)$
at the identity. This is the Lie algebra of $\gm(\C)$, written
$L\gm(\C)$. The tangent bundle $T\gm(\C)$ is naturally isomorphic to
$L\gm(\C) \cross \gm(\C)$. Thus the graph of the exponential map is an
analytic subvariety of the tangent bundle $\graph \subs T\gm(\C)$.
\begin{conj}[(SC), version 5]\label{SC intersection version}
  Let $V$ be an algebraic subvariety of $T\gm(\C)^n$, defined over
  $\Q$, of dimension strictly less than $n$. Then the intersection
  $\graph \cap V$ is contained in the union 
  \[\bigcup \class{TH}{H \mbox{ is a proper algebraic subgroup of
    }\gm(\C)^n}.\]
\end{conj}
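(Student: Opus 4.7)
The plan is to prove Conjecture~\ref{SC intersection version} equivalent to Conjecture~\ref{variety version}, i.e.\ to (SC), version~4, thereby showing that Version~5 is simply a geometric repackaging of the algebraic statement rather than a substantively new conjecture.

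First I would set up the dictionary. Identifying $T\gm(\C)^n = L\gm(\C)^n \cross \gm(\C)^n$ with $\C^n \cross (\C^\times)^n$, and thence with a Zariski-open subset of $\C^{2n}$, the graph $\graph$ corresponds to the set of tuples $(a_1,\ldots,a_n,e^{a_1},\ldots,e^{a_n})$, and the hypothesis ``$(a_1,e^{a_1},\ldots,a_n,e^{a_n})$ lies in $V$'' of Version~4 is interchangeable with the hypothesis ``$(a,e^a) \in V \cap \graph$'' of Version~5. I would then recall the classification of algebraic subgroups of $\gm(\C)^n$: each such $H$ is the simultaneous vanishing locus of a family of characters $\chi_m(x) = \prod_i x_i^{m_i} = 1$ for $m = (m_1,\ldots,m_n)$ ranging over a sublattice $\Lambda \subs \Z^n$, and $H$ is proper precisely when $\Lambda \neq 0$. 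Under the chosen identification, $TH$ becomes the locus of pairs $((b_1,\ldots,b_n),(x_1,\ldots,x_n))$ satisfying $\chi_m(x) = 1$ and $\sum_i m_i b_i = 0$ for every $m \in \Lambda$ (the second condition expressing that $b$ lies in the Lie algebra of $H$, after the canonical translation back to the identity).

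With this dictionary in place, both implications are short. For Version~5~$\Rightarrow$~Version~4: if $(a,e^a) \in V \cap \graph$ lies in some $TH$ with $H$ a proper algebraic subgroup corresponding to a sublattice $\Lambda \neq 0$, then any nonzero $m \in \Lambda$ supplies integers $m_1,\ldots,m_n$, not all zero, with $\sum_i m_i a_i = 0$. Conversely, for Version~4~$\Rightarrow$~Version~5: given $(a,e^a) \in V \cap \graph$, apply Version~4 to obtain a relation $\sum_i m_i a_i = 0$ with $m \neq 0$, let $H$ be the proper algebraic subgroup cut out by the single character $\chi_m = 1$, and observe that $\chi_m(e^a) = e^{\sum_i m_i a_i} = e^0 = 1$, so that $(a,e^a) \in TH$.

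I do not expect a substantive obstacle: the argument is a translation exercise. The only points requiring care are verifying that the subgroup $H$ produced on the Version~4~$\Rightarrow$~Version~5 side is defined over $\Q$ (it is, since its defining character has integer exponents), and that the dimension bound $\dim V < n$ is preserved by the open embedding $T\gm(\C)^n \into \C^{2n}$ (it is, since the embedding is biholomorphic onto its image, so complex dimensions agree). The genuine mathematical content sits entirely inside Version~4, and hence inside Schanuel's conjecture itself; Version~5 just phrases the ``exceptional set'' where the predimension inequality can fail as the union of the tangent bundles of the proper algebraic subgroups of $\gm(\C)^n$.
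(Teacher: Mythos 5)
Your argument is correct and is precisely the translation the paper tacitly relies on in presenting Version~5 as another equivalent formulation of (SC); the paper omits the equivalence proof entirely, but your dictionary between proper algebraic subgroups of $\gm(\C)^n$, nonzero sublattices of $\Z^n$, and $\Q$-linear relations $\sum_i m_i a_i = 0$ is the standard and evidently intended one.
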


\subsection*{Power series version}

For any ring $R$, the ring of formal power series over $R$ is given by
\[\ps{R}{t} = \class{\sum_{n\in \N}a_n t^n}{a_n \in R}\]
with the usual term-by-term addition and multiplication. If $f$ and
$g$ are two power series and $g$ has no constant term, then there is a
formal composite $f(g)$.  The exponential function is represented by
the formal power series $\sum_{n\in \N} \frac{t^n}{n!}$, and so the
ring $\ps{R}{t}$ admits a partial exponential map defined on the
subset $t\ps{R}{t}$, that is, on the principal ideal generated by $t$.
In the case where $R$ is an exponential ring, in particular for
$\Cexp$, the exponential map can be extended to the whole of
$\ps{R}{t}$. For $a = \sum_{n\in \N}a_n t^n$, define $\exp(a) =
\exp(a_0)\exp(a-a_0)$, where the first $\exp$ is that defined on $R$
and the second is that defined on power series with no constant term.
We can also consider power series in several variables
$t_1,\ldots,t_r$. There are the usual derivations
$\frac{\partial}{\partial t_j}$ for $j=1,\ldots,r$. The \emph{Jacobian
  matrix} of a tuple $f = (f_1,\ldots,f_n)$ is $\Jac(f) =
\left(\frac{\partial f_i}{\partial t_j}\right)_{i,j}$. The rank of
this matrix is used.

The analogue of Schanuel's conjecture for power series is as follows.
The special case of this statement when $r=0$ is just (SC). In theorem
2 of \cite{Ax71}, James Ax proved the converse implication.
\begin{conj}[(SC), power series version]
  Let $f_1,\ldots,f_n \in \ps{\C}{t_1,\ldots,t_r}$. If
  $\td_\C(f_1,\exp(f_1),\ldots,f_n,\exp(f_n)) - \rk\Jac(f) < n$ then
  there are $m_i \in \Z$, not all zero, such that $\sum_{i=1}^n m_if_i
  = 0$.
\end{conj}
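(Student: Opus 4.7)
The approach is via differential algebra, following Ax's 1971 method. View $K = \Laurent{\C}{t_1, \ldots, t_r}$ as a differential field with commuting derivations $D_j = \partial / \partial t_j$, whose field of constants is precisely $\C$. Write $y_i \leteq f_i$ and $z_i \leteq \exp(f_i)$; the definition of the formal exponential (extended as in the paper to allow non-zero constant terms) yields the logarithmic-derivative identity $D_j z_i = z_i \cdot D_j y_i$ for all $i, j$. The Jacobian $\Jac(f) = (D_j f_i)_{i,j}$ coincides with $(D_j y_i)$, so the hypothesis reads $\td_\C(\bar{y}, \bar{z}) < n + \rk(D_j y_i)_{i,j}$.

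The plan is to invoke Ax's functional Schanuel theorem (Theorem 2 of \cite{Ax71}): in any differential field $K \supseteq C$ with commuting derivations, given $n$ pairs $(y_i, z_i) \in K \times K^*$ satisfying $D_j z_i = z_i D_j y_i$, if $y_1, \ldots, y_n$ are $\Q$-linearly independent modulo $C$ then $\td_C(\bar{y}, \bar{z}) \ge n + \rk(D_j y_i)$. Its contrapositive, applied in our setting with $C = \C$, delivers integers $m_i$, not all zero, with $\sum_i m_i f_i \in \C$. Ax's own proof proceeds by analysing the Zariski closure $V$ over $C$ of the point $(\bar{y}, \bar{z}) \in \C^{2n}$: the differential constraint $D_j z_i = z_i D_j y_i$ forces every tangent direction along $V$, under the canonical identifications $L\ga \iso \ga$ and $T\gm \iso L\gm \cross \gm$, to land in the graph of the exponential; combining this with the $\gm^n$-action by rescaling the $z_i$-coordinates produces a non-trivial character $\prod z_i^{m_i}$ that must be a constant of $K$, and logarithmic differentiation of that identity returns the $\Q$-linear relation $\sum_i m_i y_i \in C$.

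The remaining subtlety, which is the main obstacle in addressing the statement exactly as worded, is the passage from $\sum_i m_i f_i \in \C$ to $\sum_i m_i f_i = 0$. This discrepancy is what makes the $r = 0$ reduction to (SC) not a literal special case: when the $f_i$ are themselves complex numbers, $\td_\C(\bar{y}, \bar{z})$ is always zero, so the hypothesis is trivially met but the stated conclusion is not (take $n=1$, $f_1 = 1$). The natural intended reading is that the $f_i$ are considered in the quotient module $\ps{\C}{\tbar} / \C$, in which case ``$\sum_i m_i f_i \in \C$'' reads precisely as ``$\sum_i m_i f_i = 0$''. Equivalently, if one imposes the standing hypothesis that each $f_i$ has no constant term (the domain on which the formal $\exp$ is unambiguously defined as a power series), then $\sum_i m_i f_i \in \C$ forces $\sum_i m_i f_i = 0$ by comparing constant terms. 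Under either convention Ax's theorem closes the argument; the real work is the tangent-bundle / character analysis above, not this final upgrade.
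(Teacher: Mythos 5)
You have correctly identified the central difficulty, but you then define it away, and in doing so you prove a different and strictly weaker statement. The statement here is labelled a \emph{conjecture}, not a theorem, and the paper offers no proof of it: the surrounding text says that its $r=0$ case \emph{is} (SC) and that Ax proved the converse implication, i.e.\ the power series version is \emph{equivalent} to Schanuel's conjecture and hence open. Your argument, via the differential field $\Laurent{\C}{t_1,\ldots,t_r}$ with constant field $\C$ and Ax's theorem (SC${}_D$), only ever yields $\sum_i m_i f_i \in \C$ and $\prod_i \exp(f_i)^{m_i} \in \C$. That is exactly the content of Theorem~\ref{Ax formal ps} (Ax's Corollary~1), which the paper states separately as an unconditional theorem precisely because it is restricted to power series with no constant term. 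Passing from ``$\sum_i m_i f_i$ is constant'' to ``$\sum_i m_i f_i = 0$'' for general $f_i$ requires controlling the transcendence over $\Q$ of the constant terms $f_i(0)$ and $\exp(f_i(0))$ --- that is, it requires (SC) itself. So the ``final upgrade'' you dismiss as bookkeeping is not a convention; it is the entire open content of the conjecture.

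Your two proposed fixes (read the $f_i$ modulo $\C$, or impose that they have no constant term) each turn the conjecture into the known theorem, but they contradict the paper's explicit assertion that the $r=0$ specialisation is (SC): under either convention the $r=0$ case becomes vacuous rather than Schanuel. You are right that the hypothesis as printed, with $\td_\C$ rather than $\td_\Q$, does not literally specialise to (SC) either, and indeed admits constant counterexamples such as $n=1$, $f_1=1$; the intended reading is $\td_\Q$, under which $r=0$ gives (SC), version~2, verbatim, and under which (by Ax's Theorem~2) the general case follows from (SC). But under that intended reading no unconditional proof is possible, and your Ax-based analysis correctly establishes only the no-constant-term theorem, not the conjecture.
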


We can also consider the ring of convergent power
series $\cps{t}$, the subring of $\ps{\C}{t}$ consisting of those
power series with non-zero radius of convergence. $\cps{t}$ is
naturally a (total) exponential ring. The restriction of the above
power series version of (SC) to convergent power series is
intermediate between (SC) and the power series statement, hence it is
equivalent to both.

\subsection*{Roy's version}

Damien Roy \cite{Roy01} has shown that (SC) is equivalent to the
following statement, which is in the style of transcendental number
theoretic statements saying that transcendence is equivalent to having
a good rational approximation.
\begin{conj}
  Let $n \in \N$, and fix positive real numbers $s_0,s_1,t_0,t_1,u$
  such that $\max\{1,t_0,2t_1\} < \min\{s_0,2s_1\}$ and
  \[\max\{s_0,s_1+t_1\} < u < \frac{1}{2}(1+t_0+t_1).\]

  Then for all $x_1,\ldots,x_n \in \C$ and all
  $\alpha_1,\ldots,\alpha_n \in \C^\times$, either 
  \begin{tightlist}
  \item the $x_i$ are $\Q$-linearly dependent, or 
  \item $\td_\Q(\bar{x},\bar{\alpha}) \ge n$, or
  \item there is $N_0 \in \N$ such that for all $N \in \N$ greater
    than $N_0$ there is a nonzero polynomial $P_N \in \Z[X_0,X_1]$
    with partial degrees at most $N^{t_0}$ in $X_0$ and $N^{t_1}$ in
    $X_1$, and all coefficients of modulus at most $e^N$ such that for
    all $k,m_1,\ldots,m_n \in \N$ with $k \le N^{s_0}$ and
    $\max\{m_1,\ldots,m_n\} \le N^{s_1}$ we have
    \[\left|(\mathcal{D}^k P_N)\left(\sum_{j=1}^n m_j x_j, \prod_{j=1}^n
        \alpha_j^{m_j}\right)\right| \le e^{-N^{u}}\]
  \end{tightlist}
  where $\mathcal{D}$ is the derivation $\frac{\partial}{\partial X_0}
  + X_1 \frac{\partial}{\partial X_1}$.
\end{conj}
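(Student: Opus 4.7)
The plan is to prove the conjecture conditionally on (SC), by constructing the auxiliary polynomials $P_N$ through a Siegel-lemma argument of the type standard in transcendental number theory. I would assume that the first two alternatives fail: the $x_i$ are $\Q$-linearly independent and $\td_\Q(\bar{x}, \bar{\alpha}) < n$. The task is then, for each sufficiently large $N$, to produce a nonzero $P_N \in \Z[X_0, X_1]$ with partial degrees at most $N^{t_0}$ in $X_0$ and $N^{t_1}$ in $X_1$, integer coefficients of modulus at most $e^N$, and such that $|(\mathcal{D}^k P_N)(\sum_j m_j x_j, \prod_j \alpha_j^{m_j})| \le e^{-N^u}$ for all admissible $(k, m_1, \ldots, m_n)$.

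Next I would set up the pigeonhole / Siegel framework. The unknowns are the $\sim N^{t_0 + t_1}$ integer coefficients of $P_N$, each constrained to $[-e^N, e^N]$. The constraints say that the evaluation map $P \mapsto \bigl((\mathcal{D}^k P)(\sum_j m_j x_j, \prod_j \alpha_j^{m_j})\bigr)_{k,\bar{m}}$ should have small image. There are $\sim N^{s_0 + n s_1}$ pairs $(k, \bar{m})$ in the index set, but the critical reduction is that all of these values lie in $\Q(\bar{x}, \bar{\alpha})$, a field of transcendence degree strictly less than $n$ by hypothesis. After choosing a transcendence basis, clearing denominators, and controlling archimedean sizes via the bounds on $k$ and $m_j$, the image spans a $\Z$-lattice of substantially lower rank than the naive count.

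Applying Siegel's lemma to this lattice system then produces an integer vector of controlled height whose image under the evaluation map has sup-norm bounded by $e^{-N^u}$; this vector is the coefficient sequence of $P_N$. The numerical conditions $\max\{1, t_0, 2t_1\} < \min\{s_0, 2s_1\}$ and $\max\{s_0, s_1 + t_1\} < u < \frac{1}{2}(1 + t_0 + t_1)$ are precisely what is needed for the Siegel parameters to balance: the upper bound on $u$ compares the number of available integer coefficient vectors against the reduced rank of the constraint matrix, while the lower bound on $u$ absorbs the cost of applying $\mathcal{D}^k$ up to $k \sim N^{s_0}$ and of evaluating at arguments of magnitude $\sim N^{s_1}$. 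The left-hand inequality $\max\{1, t_0, 2t_1\} < \min\{s_0, 2s_1\}$ enters as the compatibility condition guaranteeing that derivatives and translates of $P_N$ do not swamp the error budget.

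The main obstacle I expect is the bookkeeping in the last two steps: extracting from $\td_\Q(\bar{x}, \bar{\alpha}) < n$ a sharp bound on the effective $\Z$-rank of the constraint system, and then verifying that the stated inequalities among $s_0, s_1, t_0, t_1, u$ give exactly the slack needed for Siegel's lemma to deliver a nonzero solution of the required size. This is the technical heart of Roy's argument, and requires a careful simultaneous estimation of heights, derivatives, and algebraic relations among $(\bar{x}, \bar{\alpha})$ on the variety cut out by the transcendence defect.
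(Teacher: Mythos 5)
There is no proof in the paper to compare against: this statement is an open conjecture, which Roy showed to be \emph{equivalent} to (SC), so the most one could attempt is the implication (SC) $\Rightarrow$ (this statement) --- and although you announce that conditional framing, (SC) is never actually invoked in your argument; every step uses only the hypothesis $\td_\Q(\bar{x},\bar{\alpha})<n$. More seriously, you have read the third alternative as something to be \emph{constructed} whenever the first two fail, whereas in Roy's theorem the existence of the sequence $(P_N)$ functions as the \emph{hypothesis} of the criterion and $\td_\Q(\bar{x},\bar{\alpha})\ge n$ as its conclusion. The direction (SC) $\Rightarrow$ (criterion) is proved by showing that such a sequence of auxiliary polynomials forces the point $(\bar{x},\bar{\alpha})$ to come from an exponential function and then applying (SC); the construction of auxiliary polynomials belongs to the converse direction and is carried out only for $\alpha_j=e^{x_j}$, by Siegel's lemma imposing \emph{few} conditions followed by a Schwarz-lemma extrapolation along the entire function $z\mapsto P(z,e^z)$. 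That analytic extrapolation is the only known way to reach smallness of order $e^{-N^u}$ with $u>1$, and it is unavailable for arbitrary $\alpha_j$.

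Independently of the above, the Siegel-lemma step as you describe it cannot succeed. The inequalities give $t_0<s_0$ and $t_1<s_1$, so the number of conditions, roughly $N^{s_0+ns_1}$, exceeds the number of unknown coefficients, roughly $N^{t_0+t_1}$; and the proposed rescue --- that $\td_\Q(\bar{x},\bar{\alpha})<n$ collapses the rank of the constraint matrix --- conflates transcendence degree with $\Q$-linear rank. The rows of the matrix are built from the quantities $(\sum_jm_jx_j)^a(\prod_j\alpha_j^{m_j})^b$, and algebraic relations among the $x_j,\alpha_j$ do not make these $\Q$-linearly dependent (the powers of $2$ lie in a field of transcendence degree $0$ yet are $\Q$-linearly independent). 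Quantitatively: with coefficients in $[-e^N,e^N]$ there are about $e^{N^{1+t_0+t_1}}$ candidate polynomials, so a pigeonhole argument in an image space of real dimension $2r$ yields smallness about $e^{-N^{1+t_0+t_1}/(2r)}$ per coordinate; to reach $e^{-N^u}$ one needs $r\lesssim N^{1+t_0+t_1-u}$, and since $u>s_0>1$ this forces the rank $r$ to be smaller than the number of columns $N^{t_0+t_1}$ by a factor of $N^{u-1}\to\infty$. Nothing in your argument produces such a collapse. The case $n=1$, $x_1=1$, $\alpha_1=2$ makes the failure vivid: there $\td_\Q(1,2)=0<1$, every value $(\mathcal{D}^kP_N)(m,2^m)$ is a rational integer, so the bound $e^{-N^u}<1$ forces \emph{exact} vanishing, and you would need the overdetermined integer system of about $N^{s_0+s_1}$ equations in about $N^{t_0+t_1}$ unknowns to be degenerate enough to admit a nonzero solution --- something your argument neither establishes nor could establish by counting alone.
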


\section{Known cases}

The following are known special cases of Schanuel's conjecture.

\begin{description}
\item[Hermite, 1873]
  $e$ is transcendental. (Special case of (SC) with $n = 1, a_1 = 1$.)

\item[Lindemann, 1882]
  $\pi$ is transcendental. ($n = 1, a_1 = \pi i$.)

\item[Lindemann, 1882]
  If $a$ is algebraic then $e^a$ is transcendental. ($n=1$)

\item[Lindemann-Weierstrass theorem, 1885]
  If $a_1,\ldots,a_n$ are $\Q$-linearly independent algebraic numbers
  then $e^{a_1},\ldots,e^{a_n}$ are algebraically independent. (All
  the $a_i$ are algebraic.)

\item[Nesterenko, 1996]
  $\pi$ and $e^\pi$ are algebraically independent. ($n=2, a_1 = \pi,
  a_2 = i\pi$.)
\end{description}

However, some very simple cases are unknown. For example, it is not
known if $e$ and $\pi$ are algebraically independent. ($n=2, a_1 = 1,
a_2 = \pi i$.)

There are other known theorems which give special cases, such as the Gelfond-Schneider theorem and Baker's theorem.

\section{Uniform SC and Strong SC}

Schanuel's conjecture was strengthened to a uniform version by Zilber in \cite{Zilber02esesc}.
\begin{conj}[(USC), Uniform Schanuel Conjecture]\label{USC}
  For each algebraic subvariety $V \subs T\gm(\C)^n$ defined over $\Q$
  and of dimension strictly less than $n$, there is a finite set $\HV$
  of proper algebraic subgroups of $\gm^n$ such that for any
  $a_1,\ldots,a_n \in \C$, if $(a_1,e^{a_1},\ldots, a_n, e^{a_n}) \in
  V$ then there is $H \in \HV$ such that $(a_1,\ldots,a_n) \in LH + 2\pi i \Z^n$, and either $H$ has codimension at least 2, or $(a_1,\ldots,a_n) \in LH$.
\end{conj}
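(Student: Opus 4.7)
The plan is to derive (USC) from (SC) together with a uniform finiteness statement in the spirit of the Conjecture on Intersections with Tori (CIT). First, fix an algebraic $V \subs T\gm^n$ of dimension strictly less than $n$ defined over $\Q$, and take any $\bar a = (a_1,\ldots,a_n) \in \C^n$ with $(a_1,e^{a_1},\ldots,a_n,e^{a_n}) \in V$. By (SC) in the form of Conjecture~\ref{variety version}, there exist integers $m_1,\ldots,m_n$, not all zero, with $\sum_i m_i a_i = 0$. Setting $H$ to be the proper algebraic subgroup of $\gm^n$ cut out by $\prod_i x_i^{m_i} = 1$, this places $\bar a \in LH$ and hence $(\bar a, e^{\bar a}) \in TH$. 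So pointwise, every intersection point already lies on some $TH$; the remaining content of (USC) is the \emph{uniform} finiteness of the collection $\HV$ of such $H$'s.

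To pass from pointwise existence to a finite list, I would invoke (CIT): for $V \subs T\gm^n$, the atypical components of intersections of $V$ with translates $TH$ of Lie algebras of proper subtori are controlled by a single finite family of such subtori, and $\HV$ should be read off this finite family (possibly after a closure operation). The codimension dichotomy in the statement should fall out of the same bookkeeping. For $H$ of codimension $\ge 2$, the condition $\bar a \in LH + 2\pi i \Z^n$ already encodes two independent multiplicative relations on the $e^{a_i}$, which is enough dimensional drop to match $\dim V < n$. For $H$ of codimension $1$, say $H = \{\prod x_i^{m_i} = 1\}$, one needs the \emph{additive} relation $\sum m_i a_i = 0$, i.e.\ $\bar a \in LH$ rather than $\bar a \in LH + 2\pi i \Z^n$; this is precisely the extra strength of (SC) over (Weak SC) in the codimension-$1$ case.

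The main obstacle is that (CIT) is itself an open conjecture, so (USC) is expected to be a theorem relative to (SC) + (CIT) rather than to (SC) alone. Even granting (CIT), the delicate point is the compatibility between the additive and multiplicative pictures: one must ensure each codimension-$1$ $H$ in $\HV$ is attested by a genuine $\Q$-linear dependence among the $a_i$ and not merely by a multiplicative dependence on the $e^{a_i}$ up to the kernel, and that the decomposition of $V$ into its atypical components (supplied by CIT) is compatible with this additive/multiplicative split. I would spend most of the technical effort on this bookkeeping, and on making the finite family $\HV$ depend only on $V$ and not on the point $\bar a$.
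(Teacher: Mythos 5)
What you have written is not a proof of (USC), nor does the paper contain one: (USC) is a \emph{conjecture}, strictly stronger than (SC), and the paper explicitly says that the converse implication (SC) $\implies$ (USC) ``is not known.'' What you are sketching is instead the conditional implication (SC) $\&$ (CIT) $\implies$ (USC), which the paper records (without proof) as a Proposition citing Zilber's paper. You do acknowledge this in your last paragraph, so you have correctly diagnosed the status of the statement; but you should be clear that your text is a heuristic for that conditional implication, not a demonstration of the conjecture itself.

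Even as a sketch of (SC) $\&$ (CIT) $\implies$ (USC), the key step is underdeveloped. CIT as stated here is about subvarieties of $\gm^n$ and their intersections with algebraic subgroups of $\gm^n$; it does not directly apply to the variety $V \subs T\gm^n$, since $T\gm^n \iso \ga^n \cross \gm^n$ has a $\ga^n$ factor with infinitely many algebraic subgroups and no CIT-type finiteness available there. Zilber's argument therefore cannot simply ``read $\HV$ off'' an application of CIT to $V$: it has to separate the additive and multiplicative data, apply (SC) pointwise to descend to a smaller $n$, and use CIT on the multiplicative projection to bound the family of tori that can arise — an induction you gesture at but do not carry out. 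Similarly, the dichotomy between $\bar a \in LH$ (codimension $1$) and $\bar a \in LH + 2\pi i\Z^n$ (codimension $\ge 2$) is exactly where the kernel $2\pi i\Z$ enters, and your remark that the codimension-$\ge 2$ case ``already encodes two independent multiplicative relations'' is not a substitute for tracking how the kernel contributes to the linear relations over $\Q$ versus mod $2\pi i\Z^n$. None of this is fatal to the overall strategy — it is the right strategy — but as written the proposal asserts the conclusion of the difficult bookkeeping rather than performing it, and the final statement of the paper it would justify is the Proposition (SC) $\&$ (CIT) $\implies$ (USC), not Conjecture~\ref{USC} itself.
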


This conjecture can be written in an intersection version similar to Conjecture~\ref{SC intersection version}, but it is a little more messy.

It is easy to see from the statements that (USC) implies (SC). The
converse is not known, but the diophantine conjecture CIT (see later)
implies that any exponential field satisfying (SC) does indeed satisfy
(USC).  However, we can do without CIT in some cases. Write
(SC${}_\R$) for the conjecture that the real exponential field $\Rexp$
satisfies (SC), that is that the statement of Schanuel's conjecture
holds when the $a_i$ are real numbers. Similarly, write (USC${}_\R$)
for the conjecture that $\Rexp$ satisfies (USC).
\begin{prop}[\cite{USCR}]
  (SC${}_\R$) $\iff$ (USC${}_\R$). Moreover, if an exponential field (or
  partial exponential field) is o-minimal with analytic
  cell-decomposition and satisfies (SC) then it also satisfies (USC).
\end{prop}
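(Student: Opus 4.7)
For (USC) $\Rightarrow$ (SC), I would argue formally from the statements. Given a would-be counterexample $a$ to (SC), let $V$ be the locus of algebraic relations between $(a,\exp(a))$, so $\dim V < n$; (USC) then supplies $H \in \HV$ with $a \in LH + 2\pi i \Z^n$. If $H$ has codimension $1$, the codimension clause forces $a \in LH$ and one reads off a nontrivial integer linear relation among the $a_i$. If $H$ has codimension $\ge 2$, the lattice defining $H$ has rank $\ge 2$ and supplies two independent relations $\sum m_i a_i = 2\pi i k$ and $\sum m'_i a_i = 2\pi i k'$; these combine, using the cyclicity of $\ker\exp$, into $\sum (k' m_i - k m'_i) a_i = 0$ with integer coefficients not all zero. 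So only the forward direction is substantive, and SC$_\R \Rightarrow$ USC$_\R$ will follow as the special case $K = \Rexp$ of the general o-minimal statement.

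For the forward direction, fix $V \subseteq T\gm^n$ algebraic over $\Q$ with $\dim V < n$, and work in the o-minimal (partial) exponential field $K$. First observe that since $\exp$ is strictly monotonic on its domain in the real closed field $K$, we have $\ker(\exp) = \{0\}$; this removes the coset bookkeeping from (USC) and makes $LH + \ker^n = LH$. Consider the definable set
\[ W \leteq \{\, a \in K^n : (a, \exp(a)) \in V \,\}, \]
and apply analytic cell decomposition to partition $W$ into finitely many connected analytic cells $C_1, \ldots, C_m$.

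The plan now is to show each cell $C_i$ is contained in a single $LH_i$, and the tool is Baire category. Applied pointwise, (SC) produces, for every $a \in C_i$, a proper algebraic subgroup $H_a \subseteq \gm^n$ with $a \in LH_a$. Since there are only countably many such subgroups, $C_i = \bigcup_H (C_i \cap LH)$ is a countable union of closed analytic subsets of $C_i$. Now $C_i$, being a connected real-analytic manifold, is a Baire space, and by the identity principle every proper closed analytic subset of $C_i$ is nowhere dense; hence one of the sets $C_i \cap LH$ must equal $C_i$, i.e.\ $C_i \subseteq LH_i$ for some $H_i$. Taking $\HV \leteq \{H_1, \ldots, H_m\}$ gives the required finite family, and the codimension clause is automatic because $\ker\exp$ is trivial.

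The main obstacle I anticipate is the Baire/analyticity step: one needs that proper analytic subsets of connected analytic cells are nowhere dense, and this is precisely what the analytic-cell-decomposition hypothesis buys. Without it, a cell could conceivably be a genuine countable union of proper sub-objects and no single $H_i$ would suffice, causing the conclusion to fail. The remainder of the argument is formal bookkeeping, and the biconditional in the real case is then just this analysis applied to $K=\Rexp$ together with the trivial reverse implication.
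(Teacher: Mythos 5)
Your reconstruction captures the key ideas of the cited proof in \cite{USCR} and is essentially correct: use o-minimal analytic cell decomposition to break the definable solution set $W = \{a : (a,\exp(a))\in V\}$ into finitely many connected analytic cells; observe that pointwise (SC) forces each point of $W$ into $LH$ for one of the countably many proper algebraic subgroups $H$ of $\gm^n$; then invoke Baire category together with the real-analytic identity principle to force each cell into a single $LH$, yielding the finite family $\HV$. This is the same route as the reference the paper points to, and the remark that the kernel is trivial (so the $2\pi i\Z^n$ coset and the codimension clause drop out) is also correct in the real and o-minimal partial settings. Two small points you gloss over but which do not affect correctness: you should say explicitly that the hypothesis $(a,\exp(a))\in V$ with $V$ over $\Q$ and $\dim V<n$ gives $\td_\Q(a,\exp(a))<n$, so that (SC) really does apply pointwise; and the "combine two relations" argument in your reverse direction is only needed in the complex setting — over $\R$ the coset $LH+2\pi i\Z^n$ already collapses to $LH$, so the reverse implication is immediate.
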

The structure of $\Rexp$ with restricted sine interprets the complex
field with complex exponentiation defined on the strip $\R \cross
[-\pi i,\pi i]$. This structure is known to be o-minimal, with
analytic cell-decomposition.
\begin{proof}
  The proof of this for $\Rexp$ is in \cite{USCR}, but it only uses
  the fact that $\Rexp$ is o-minimal and has analytic
  cell-decomposition, so the same proof gives the full statement.
\end{proof}

Another strengthening of (SC) is the following.
\begin{conj}[(SSC), Strong Schanuel conjecture]\label{SSC}
  The statement of (SC) holds for all ultrapowers of $\Cexp$
  (including $\Cexp$ itself). Equivalently, (SC) is true of $\Cexp$
  and is part of its first order theory.
\end{conj}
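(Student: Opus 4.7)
The plan is to translate (SC) into an $L_{\omega_1,\omega}$ schema, one statement $\sigma_{n,V}$ for each pair $(n,V)$ with $V \subs \C^{2n}$ an algebraic subvariety over $\Q$ of dimension strictly less than $n$ (as in Conjecture~\ref{variety version}): for all $\bar a \in \C^n$, if $(\bar a, e^{\bar a}) \in V$, then $\sum_{i=1}^n m_ia_i = 0$ for some $\bar m \in \Z^n \minus \{0\}$. Individually these $\sigma_{n,V}$ are not first-order, because the conclusion involves an infinite disjunction over integer tuples. The key device is a family of first-order approximations $\phi_{n,V}^N$ obtained by additionally restricting $\max_i |m_i| \le N$. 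Each $\phi_{n,V}^N$ is a finite disjunction of linear conditions in the language of exponential rings, because $V$ is cut out by polynomials with rational coefficients, making $(\bar a, e^{\bar a}) \in V$ first-order.

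For the forward direction---(SC) in all ultrapowers implies (SC) follows from a first-order subtheory of $\Th(\Cexp)$---I would show that for every $(n,V)$ there exists $N = N(n,V)$ with $\Cexp \models \phi_{n,V}^N$. Suppose not: for each $N$ pick a witness $\bar a^{(N)} \in \C^n$ with $(\bar a^{(N)}, e^{\bar a^{(N)}}) \in V$ but admitting no $\Z$-linear relation of height $\le N$. Fix a non-principal ultrafilter $\mathcal{U}$ on $\N$ and let $\bar a^* = [\bar a^{(N)}]_{\mathcal{U}} \in \Cexp^\N / \mathcal{U}$. The ultraproduct theorem gives $(\bar a^*, e^{\bar a^*}) \in V$ in the ultrapower. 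Moreover $\bar a^*$ is $\Q$-linearly independent there: any fixed $\bar m \in \Z^n \minus \{0\}$ has some finite height $N_0$, so by construction $\sum m_i a_i^{(N)} \neq 0$ for all $N \ge N_0$, a cofinite (hence $\mathcal{U}$-large) set, whence $\sum m_i a_i^* \neq 0$ in the ultrapower. This contradicts (SC) in the ultrapower. Thus $\{\phi_{n,V}^{N(n,V)} : n, V\}$ is a first-order subtheory of $\Th(\Cexp)$ that entails (SC), so (SC) is ``part of'' $\Th(\Cexp)$ in the intended sense.

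The reverse direction is immediate from the ultraproduct theorem: any first-order subtheory $\Sigma \subs \Th(\Cexp)$ persists to every ultrapower, so if $\Sigma$ entails (SC), then every ultrapower of $\Cexp$ (including the trivial one, $\Cexp$ itself) satisfies (SC). The hard part will be the ultraproduct step in the forward direction, where one must verify that the diagonal witness $\bar a^*$ simultaneously inherits membership in $V$ and escapes every $\Z$-linear relation; both rely on the fact that each fixed integer linear relation has bounded height, so cofinite avoidance at the indexed levels promotes to outright failure in the ultrapower. This is the compactness input that converts ``(SC) in all ultrapowers'' into a uniform first-order axiomatization modulo $\Th(\Cexp)$.
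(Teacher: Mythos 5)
The statement (SSC) is a conjecture (the paper's 2018 note even records it as false), and the ``Equivalently'' clause within it is asserted without argument; there is thus no paper proof to compare against. What you have supplied is a correct proof of that internal equivalence. The forward direction is the standard ultraproduct compactness argument: if no height bound $N$ makes $\Cexp\models\phi_{n,V}^N$, diagonalize the failing witnesses against a non-principal ultrafilter to produce a $\Q$-linearly independent tuple in an ultrapower whose exponential graph still lies in $V$, contradicting (SC) there; so under the hypothesis that (SC) holds in all ultrapowers, each $\phi_{n,V}^{N(n,V)}$ belongs to $\Th(\Cexp)$ and together they form a first-order subtheory entailing the $L_{\omega_1,\omega}$-schema. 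The reverse direction is, as you say, immediate from the fundamental theorem on ultraproducts. One remark worth recording: the bounded-height sentences $\phi_{n,V}^{N(n,V)}$ you extract are exactly a weak form of the Uniform Schanuel Conjecture, Conjecture~\ref{USC} (a bounded-height $\Z$-linear relation, but with no kernel shift), so your argument makes precise the sense in which (SSC) amounts to a uniformity claim sitting between (SC) and (USC). Since (SSC) is false, your proof establishes only that the two clauses stand or fall together---which is all the statement asserts, and is all that can be proved.
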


{\it Added in 2018: The conjecture (SSC) is false. See \cite{ECFCIT} for a discussion of the issue.}

\section{Parametric SC}

(SC) version 4 applies to subvarieties defined over $\Q$, but there is
a statement dealing with subvarieties defined over any subfield of
$\C$.
\begin{conj}[(SC${}_{param}$), Schanuel conjecture with parameters]
  Let $V \subs T\gm(\C))^n$ be any algebraic subvariety, of dimension
  strictly less than $n$. Then there is $l \in \N$ and $b_1,\ldots,b_l
  \in \C$ such that if $a_1,\ldots,a_n \in \C$ such that
  $(a_1,e^{a_1},\ldots,a_n,e^{a_n}) \in V$ then there are
  $m_1,\ldots,m_{n+l} \in \Z$, with $m_1,\ldots,m_n$ not all zero,
  such that $\sum_{i=1}^n m_ia_i + \sum_{i=1}^l m_{i+n}b_i = 0$.

  Furthermore, if $V$ is a fibre $W(\bar{p})$ where $W \subs
  T\gm(\C)^n \cross P$ and $P$ is an algebraic subvariety of $\C^k$
  defined over $\Q$ then we may take $l = k(n+1)$.
\end{conj}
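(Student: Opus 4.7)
The plan is to reduce (SC${}_{param}$) to (SC) by applying Schanuel's conjecture to a joint tuple that incorporates parameters defining $V$. Fix $V$ and choose a finite tuple $\bar c = (c_1,\ldots,c_k) \in \C^k$ over which $V$ is defined as an algebraic subvariety. The aim is to produce a tuple $\bar b = (b_1,\ldots,b_l) \in \C^l$ such that $\bar c \in \Q(\bar b, e^{\bar b})$ and such that $\bar b$ saturates Schanuel's inequality, namely $\delta(\bar b) = 0$ in the notation of Conjecture~\ref{predim version}.

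Granted such a $\bar b$, the rest is essentially bookkeeping. For any $\bar a$ with $(\bar a, e^{\bar a}) \in V$, the hypothesis $\dim V < n$ together with the fact that $V$ is defined over $\Q(\bar b, e^{\bar b})$ gives
\[\td_\Q(\bar a, e^{\bar a}, \bar b, e^{\bar b}) - \td_\Q(\bar b, e^{\bar b}) < n.\]
On the other hand, (SC) applied to the joint tuple $(\bar a, \bar b)$ yields
\[\td_\Q(\bar a, e^{\bar a}, \bar b, e^{\bar b}) \ge \Qldim(\bar a, \bar b) = \Qldim(\bar a/\bar b) + \Qldim(\bar b).\]
Using $\td_\Q(\bar b, e^{\bar b}) = \Qldim(\bar b)$ and combining these inequalities gives $\Qldim(\bar a/\bar b) < n$. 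Hence $\bar a$ is $\Q$-linearly dependent modulo the $\Q$-span of $\bar b$, which produces integers $m_1,\ldots,m_{n+l}$ with some $m_i$ for $i \le n$ nonzero and $\sum_{i=1}^n m_ia_i + \sum_{j=1}^l m_{n+j}b_j = 0$, as required.

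The hard part is constructing the tuple $\bar b$ with $\delta(\bar b) = 0$ while preserving $\bar c \in \Q(\bar b, e^{\bar b})$. Starting from $\bar b_0 = \bar c$, I would iteratively adjoin elements to reduce the excess predimension. The two basic moves are to adjoin a \emph{logarithm}, that is an element $y$ with $e^y$ already algebraic over the current field but $y$ itself $\Q$-linearly independent of the current tuple; and to adjoin an element $y$ algebraic over the current field but with $e^y$ new. Either operation increases $\Qldim$ by one while increasing $\td_\Q$ of the corresponding exponential field extension by at most one, so $\delta$ is non-increasing. Since (SC) keeps $\delta \ge 0$ throughout, the process should terminate at $\delta = 0$ after finitely many steps. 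The delicate point, which is the main obstacle of the proof, is to show that such elements can always be found so long as $\delta > 0$; this uses (SC) nontrivially and exploits the richness of $\Cexp$, which has logarithms for every nonzero element and $n$th roots for every element.

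For the quantitative fibre statement $V = W(\bar p)$, I would make the construction of $\bar b$ uniform in $\bar p \in P$. The parameter $\bar p$ itself contributes $k$ coordinates, and one must absorb the Schanuel excess associated to the $n$ coordinates of the ambient $T\gm^n$, which in the worst case requires a further $kn$ logarithms, roughly one for each pair $(p_i, j)$ with $i \le k$ and $j \le n$. This gives the claimed bound $l = k(n+1)$. Making the construction explicit and verifying the saturation of $\delta$ uniformly in $\bar p \in P$ is the main additional labour for the fibre version.
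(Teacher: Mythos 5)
The overall strategy is right and matches the standard Hrushovski-style argument behind Zilber's \cite[Proposition~4]{Zilber02esesc}: absorb the parameters of $V$ into a tuple $\bar b$ chosen so that (SC) applied to the joint tuple $(\bar a,\bar b)$ forces $\Qldim(\bar a/\bar b)<n$. The bookkeeping you carry out in the second paragraph is also correct, granted the properties you impose on $\bar b$.

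The gap is in the condition you impose on $\bar b$. You ask for $\delta(\bar b)=0$, but this is strictly stronger than what is needed and is in general unattainable. If the parameters $\bar c$ over which $V$ is defined include an element that is exponentially transcendental over $\emptyset$ (for instance $V$ a single point $\{(a_0,e^{a_0})\}$ with $a_0$ a sufficiently generic real), then, assuming (SC), every finite tuple $\bar b$ with $\bar c$ algebraic over $\Q(\bar b,e^{\bar b})$ has $\delta(\bar b)\ge 1$: one cannot drive the predimension to zero while retaining $\bar c$. Your ``two basic moves'' only give $\delta$ non-increasing, and there is no reason the process terminates at $0$ rather than at some positive value; in the example above it will get stuck at $1$.

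What you actually need is the weaker, and always available, condition that $\bar b$ be \emph{self-sufficient} (strong): $\delta(\bar b')\ge\delta(\bar b)$ for every finite $\bar b'\supseteq\bar b$. Such a tuple containing $\bar c$ exists because (SC) says $\delta\ge 0$, so one may pick any $\bar b\supseteq\bar c$ minimizing $\delta$. Then one replaces the use of $\delta(\bar a\bar b)\ge 0$ plus $\delta(\bar b)=0$ by the single inequality $\delta(\bar a\bar b)\ge\delta(\bar b)$, which rearranges to
\[\td_\Q(\bar a,e^{\bar a},\bar b,e^{\bar b})-\td_\Q(\bar b,e^{\bar b})\ \ge\ \Qldim(\bar a\bar b)-\Qldim(\bar b)\ =\ \Qldim(\bar a/\bar b),\]
and since the left-hand side is $<n$ as you argue, the conclusion follows. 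This repair is minor but essential.

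Finally, the explanation of the bound $l=k(n+1)$ in the fibre case is only a heuristic; the count ``one logarithm for each pair $(p_i,j)$'' is not an argument. To get the stated bound one must actually control the number of elements added in the self-sufficient closure uniformly in $\bar p$, and this requires a separate quantitative argument, not just the vague worst-case count.
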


\begin{prop}[{\cite[Proposition 4]{Zilber02esesc}}]
  (SC) $\implies$ (SC${}_{param}$)
\end{prop}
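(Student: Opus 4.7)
The plan is to reduce (SC${}_{param}$) to (SC) by absorbing the parameters defining $V$ into an auxiliary tuple $\bar{b}$ and then applying Schanuel's conjecture to the combined tuple $(\bar{a},\bar{b})$. First I would reduce to the ``furthermore'' clause: any algebraic subvariety $V \subs T\gm(\C)^n$ of dimension $<n$ may be written as a fibre $V = W(\bar{p})$ of some family $W \subs T\gm^n \times P$ defined over $\Q$, with $\bar{p}\in P \subs \C^k$ and $P$ also $\Q$-defined (take $\bar p$ to be the coefficients appearing in the polynomials cutting out $V$, and $P$ their locus). Shrinking $W$ if necessary, we may assume $\dim W < n + \dim P \le n + k$.

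Next I would look for a tuple $\bar{b}$ of length $l = k(n+1)$ with two properties: (i) $\bar{p} \in \Q(\bar{b},e^{\bar{b}})^{\mathrm{alg}}$, so that $V$ is defined over this field, and (ii) $\delta(\bar{b})=0$, i.e.\ $\bar{b}$ saturates the Schanuel inequality. Given such a $\bar b$, apply (SC) to $(\bar{a},\bar{b})$ and use the additivity $\delta(\bar{a},\bar{b}) = \delta(\bar{a}/\bar{b}) + \delta(\bar{b})$ to conclude $\delta(\bar{a}/\bar{b}) \ge 0$, that is $\td(\bar{a},e^{\bar{a}}/\bar{b},e^{\bar{b}}) \ge \Qldim(\bar{a}/\bar{b})$. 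By (i) and $(\bar{a},e^{\bar{a}}) \in V$ with $\dim V<n$, the left side is at most $n-1$, so $\Qldim(\bar{a}/\bar{b})<n$. This forces a $\Q$-linear dependence among the images of $a_1,\ldots,a_n$ in $\C/\Q\bar{b}$, which after clearing denominators gives integers $m_1,\ldots,m_{n+l}$ with $(m_1,\ldots,m_n)\ne 0$ realising the required relation.

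For the construction of $\bar{b}$, a natural candidate is an iterated-logarithm tuple: set $b_{i,0}=p_i$ and pick $b_{i,j+1}$ to be a logarithm of $b_{i,j}$ for $i=1,\ldots,k$ and $j=0,\ldots,n-1$, giving $k(n+1)$ elements that manifestly satisfy (i). To obtain (ii) I would exploit the freedom to translate each $b_{i,j}$ by an element of $2\pi i\Z$, together with the internal relations $e^{b_{i,j+1}} = b_{i,j}$, so that the $k(n+2)$-element union $\bar{b}\cup e^{\bar{b}}$ has transcendence degree exactly $\Qldim(\bar{b})$. Having $n$ levels of iteration per parameter provides enough slack to absorb the up-to-$\dim P \le k$ units of excess predimension contributed by $\bar p$, producing the announced bound $l = k(n+1)$.

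The hard part is verifying (ii), and more generally ensuring that such a Schanuel-saturated extension of $\bar p$ exists using only (SC) as a hypothesis rather than the stronger exponentially-closed-field axioms à la Zilber. The inequality $\delta(\bar b) \ge 0$ is immediate; the reverse inequality is the genuine content, and must be extracted from (SC) applied to subtuples of $\bar b$ together with careful counting against the iterated-log relations. Once (ii) is in place, the rest of the argument is the short predimension computation above.
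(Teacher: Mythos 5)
Your overall framework is right — absorb the parameters into an auxiliary tuple $\bar{b}$, apply (SC) to the combined tuple, and extract the linear relation from the predimension computation — and you have correctly identified where the real work lies. But your condition (ii), namely $\delta(\bar{b}) = 0$, is strictly stronger than what is needed and, more importantly, is simply not achievable in general. If even one coordinate $p_i$ is \emph{exponentially transcendental} (over $\emptyset$), then no finite tuple $\bar{b}$ with $p_i$ algebraic over $\Q(\bar{b},e^{\bar{b}})$ can have $\delta(\bar{b}) = 0$: such a $\bar{b}$ would exhibit $p_i$ as lying in the exponential-algebraic closure of $\emptyset$. Assuming (SC), generic parameters are exponentially transcendental, so your construction is doomed for generic $V$. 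Relatedly, the iterated-logarithm tuple does not drive $\delta$ down: for generic $p$, each new logarithm adds one new transcendental and one new $\Q$-linearly independent element, so $\delta$ is typically unchanged, and the $2\pi i\Z$-translations do not help since $2\pi i$ is itself transcendental.

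What you actually need is the weaker and always-achievable condition that $\bar{b}$ be \emph{strong} (self-sufficient): $\delta(\bar{b}') \ge \delta(\bar{b})$ for every finite $\bar{b}' \supseteq \bar{b}$. Since (SC) says $\delta \ge 0$ everywhere, the minimum of $\delta$ over finite supersets of $\bar{c} = (\log p_1,\ldots,\log p_k)$ exists; any $\bar{b}$ attaining it is strong, and then $\delta(\bar{a}/\bar{b}) = \delta(\bar{a}\cup\bar{b}) - \delta(\bar{b}) \ge 0$ for every $\bar{a}$, which is exactly what your additivity step requires. The cleanest way to see the bound $l = k(n+1)$ is to build $\bar{b}$ by the following process rather than by iterated logs: start with $\bar{b}_0 = \bar{c}$ (so $\delta(\bar{b}_0) \le k$), and whenever there exists $\bar{a}$ with $(\bar{a},e^{\bar{a}}) \in V$ and $\Qldim(\bar{a}/\bar{b}_i) = n$, set $\bar{b}_{i+1} = \bar{b}_i \cup \bar{a}$. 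Your own computation shows $\td(\bar{a},e^{\bar{a}}/\bar{b}_i,e^{\bar{b}_i}) \le \dim V \le n-1$ while $\Qldim(\bar{a}/\bar{b}_i) = n$, so $\delta(\bar{a}/\bar{b}_i) \le -1$ and hence $\delta(\bar{b}_{i+1}) \le \delta(\bar{b}_i) - 1$. Since $\delta \ge 0$ by (SC), the process stops after at most $\delta(\bar{c}) \le k$ steps, each adding $n$ coordinates, so $|\bar{b}| \le k + kn = k(n+1)$, and at that point every $\bar{a}$ from $V$ satisfies $\Qldim(\bar{a}/\bar{b}) < n$ as required.
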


The uniform version is simpler to state.

\begin{conj}[(USC${}_{param}$), Uniform Schanuel conjecture with
  parameters] \ \\
  Let $V \subs T\gm(\C))^n$ be any algebraic subvariety, of dimension
  strictly less than $n$. Then there is a finite collection
  $\mathcal{K}_V$ of cosets of the form $g \cdot TH$ where $H$ is a
  proper algebraic subgroup of $\gm(\C)^n$ such that $\graph \cap V
  \subs \bigcup \mathcal{K}_V$.
\end{conj}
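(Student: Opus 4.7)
The plan is to derive (USC${}_{param}$) from (SC${}_{param}$) together with the Zilber diophantine conjecture (CIT), paralleling the (SC) $+$ CIT $\implies$ (USC) strategy alluded to in the introduction. The naive attempt to mimic the parameter-absorption trick behind (SC) $\implies$ (SC${}_{param}$) by embedding $V$ as a fibre of a $\Q$-definable $W \subs T\gm(\C)^{n+l}$ and applying (USC) directly fails for a basic dimensional reason: the Lie-algebra coordinates of the $l$ new factors are unconstrained, giving $\dim W = \dim V + 2l$, which exceeds the threshold $n+l$ unless $\dim V < n - l$. So some extra geometric input is needed.

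First I would apply (SC${}_{param}$) to $V \subs T\gm(\C)^n$ of dimension $< n$, producing a finite parameter tuple $b_1, \ldots, b_l \in \C$ such that every point $(a, e^a) \in \graph \cap V$ obeys a $\Z$-linear relation
\[\sum_{i=1}^n m_i a_i + \sum_{j=1}^l m_{j+n} b_j = 0\]
with $(m_1, \ldots, m_n)$ not all zero. Each integer tuple $(m_i)_{i\le n}$ determines a proper algebraic subgroup $H \le \gm(\C)^n$, namely the kernel of the character $(y_1, \ldots, y_n) \mapsto \prod y_i^{m_i}$, and the constant shift $-\sum_j m_{j+n} b_j$ specifies a translate, placing the point in a coset $g \cdot TH$ of the tangent bundle. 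Thus $\graph \cap V$ lies in an a priori countable union of such cosets, parametrised by the admissible integer vectors.

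Second I would invoke CIT to cut this countable collection down to a finite one. CIT, applied to the constructible family of translates of tangent bundles $TH$ for proper subgroups $H \le \gm(\C)^n$, asserts that the union of atypical components of intersections $V \cap (g \cdot TH)$ is contained in finitely many cosets $g_k \cdot TH_k$. Each coset produced in the previous step is forced to contribute atypically by the dimension hypothesis $\dim V < n$ (so that the $\Z$-linear relation is in excess of the generic dimension count for $V \cap (g \cdot TH)$), and CIT therefore collapses the countable collection to the desired finite family $\mathcal K_V$.

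The main obstacle is that CIT is itself an open diophantine conjecture, so this route proves (USC${}_{param}$) only conditionally. Unconditionally, one can bypass CIT in the o-minimal setting: the argument of \cite{USCR} for (SC${}_\R$) $\iff$ (USC${}_\R$) uses only o-minimality and analytic cell-decomposition, and should upgrade to give (SC${}_{param}$) $\iff$ (USC${}_{param}$) over any o-minimal partial exponential field, in particular for $\Rexp$, since o-minimal definable-choice supplies the required geometric finiteness in place of CIT. A minor reduction is that parameters algebraic over $\Q$ (including $b_j = 0$) can be absorbed into the defining equations of $V$, so we may assume the $b_j$ are $\Q$-linearly independent transcendentals without loss of generality.
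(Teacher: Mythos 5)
The statement you are trying to prove, (USC${}_{param}$), is presented in the paper as an open conjecture, not a theorem or proposition; the paper gives no proof, conditional or otherwise, and does not even record a proposition of the form (SC${}_{param}$) $\&$ (CIT) $\implies$ (USC${}_{param}$) analogous to Zilber's (SC) $\&$ (CIT) $\implies$ (USC). So there is no paper proof to compare against, and what you have written can at best be assessed as a proposed conditional derivation.

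As such a derivation it has real gaps. The central one is the assertion that each coset $g\cdot TH$ arising from the $\Z$-linear relation ``is forced to contribute atypically by the dimension hypothesis $\dim V < n$''. This does not follow: the fact that a point of $\graph\cap V$ lies on $g\cdot TH$ says nothing about the dimension of the irreducible component of $V\cap(g\cdot TH)$ through it. The intersection that is genuinely atypical is $\graph\cap V$ itself (with $\graph$ of analytic dimension $n$ inside $T\gm(\C)^n$ of dimension $2n$), but $\graph$ is not algebraic, so one cannot feed it to CIT directly; the whole art of the (SC) $\&$ (CIT) $\implies$ (USC) argument in \cite{Zilber02esesc} is the translation between the two. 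Secondly, CIT as stated in the paper concerns subvarieties of the torus $\gm^n$ intersected with algebraic subgroups, defined over $\bar\Q$; your $V$ lives in $T\gm(\C)^n \cong \ga^n\cross\gm^n$, which is not a torus, and is allowed arbitrary complex parameters, so at minimum you would need (CIT${}_{param}$) and a careful passage between $T\gm^n$ and a torus, neither of which you address. Finally, the closing o-minimal remark is only a heuristic: \cite{USCR} upgrades (SC) to (USC) for o-minimal exponential fields, but you would still have to prove the parametric analogue and supply the (SC${}_{param}$) input for that setting, so it does not discharge the conditionality.

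What is reasonable in your write-up: the opening observation that $V$ cannot simply be absorbed as a fibre of a $\Q$-variety and fed to (USC) is a correct diagnosis of why the parametric version is a separate statement, and the overall shape (SC${}_{param}$) plus a diophantine finiteness input $\implies$ (USC${}_{param}$) is the natural analogue of the paper's Proposition for (USC). But it remains a conditional outline of an open conjecture, with the CIT step not actually carried out correctly.
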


\section{Ax's theorem and some consequences}

An important piece of work on Schanuel's conjecture was done by James
Ax, in \cite{Ax71}, using differential fields. Let $F$ be a field (of
characteristic zero), $\Delta$ a set of derivations on $F$, and $C$
the common field of constants, which is given by $C = \bigcap_{D \in
  \Delta} \class{x \in F}{Dx = 0}$. Usually $\Delta$ will be a finite
set or a finite-dimensional $F$-vector space of derivations. If $x =
(x_1,\ldots,x_n)$ is a finite tuple of elements of $F$, the
\emph{Jacobian matrix} of $x$ is defined to be the matrix $\Jac(x)$
with entries $(Dx_i)$ for $i=1,\ldots,n$ and $D \in \Delta$. The order
of the columns will not matter, nor that the matrix is infinite if
$\Delta$ is infinite, because we are only concerned with the rank of
the matrix, $\rk \Jac(x)$, which is at most $n$.
Write
\[\Gamma = \class{(x,y) \in T\gm(F)}{\frac{Dy}{y} = Dx \mbox{ for each
  }D \in \Delta}\] for the solution set to the exponential
differential equation. In this language, Ax's main theorem is as
follows.
\begin{theorem}[(SC${}_D$), Differential field Schanuel condition,
  Ax's theorem]
  If $x_1,y_1,\ldots,x_n,y_n \in \Gamma^n$ and
  $\td_C(x_1,y_1,\ldots,x_n,y_n) - \rk\Jac(x) < n$ then there are $m_i
  \in \Z$, not all zero, such that $\sum_{i=1}^n m_ix_i \in C$ and
  $\prod_{i=1}^n y_i^{m_i} \in C$.
\end{theorem}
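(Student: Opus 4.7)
The plan is to argue by induction on $n$, following Ax's approach via formal power series. The base case is vacuous, and for the inductive step the first move is to reduce to the situation where $x_1, \ldots, x_n$ are $\Q$-linearly independent modulo $C$. If instead there exist integers $m_1, \ldots, m_n$, not all zero, with $\sum m_i x_i = c \in C$, then applying any $D \in \Delta$ yields $\sum m_i Dx_i = 0$, hence
\[\frac{D\left(\prod_i y_i^{m_i}\right)}{\prod_i y_i^{m_i}} = \sum_i m_i \frac{Dy_i}{y_i} = \sum_i m_i Dx_i = 0,\]
so $\prod_i y_i^{m_i} \in C$ as well, which delivers the conclusion directly. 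Thus we may assume $\Q$-linear independence modulo $C$, and the target becomes the inequality $\td_C(\bar x, \bar y) \ge n + \rk \Jac(\bar x)$.

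A second reduction replaces $\Delta$ by a subset of $r \leteq \rk \Jac(\bar x)$ derivations whose column vectors $(Dx_1, \ldots, Dx_n)$ are $F$-linearly independent, so we may assume $\Delta$ is finite of size $r$ and $\Jac(\bar x)$ has full row rank. The central technical step is then a Seidenberg-type embedding of the finitely generated differential subfield of $F$ containing $\bar x$ and $\bar y$ into a field of convergent power series $\cps[C]{t_1, \ldots, t_r}$ in which each $D \in \Delta$ becomes a partial derivative $\partial/\partial t_j$. Inside this power series field, the condition $Dy_i/y_i = Dx_i$ forces $y_i = c_i \exp(x_i - x_i(0))$ for some $c_i \in C^\times$, where $\exp$ is the standard formal exponential on series with zero constant term. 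The original problem is thereby converted into a transcendence statement purely about formal exponentials of $\Q$-linearly independent power series.

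The main obstacle is the resulting estimate for formal exponentials --- essentially the power series version of (SC), which is Ax's theorem 2 in \cite{Ax71}. I would prove it by nested induction: assume a nontrivial algebraic relation among the $x_i$ and $\exp(x_i - x_i(0))$ of minimal complexity, differentiate it by each $\partial/\partial t_j$, and exploit the identity $\partial \exp(f)/\partial t_j = (\partial f/\partial t_j)\exp(f)$ to manufacture either a relation of strictly lower complexity or a forced $\Q$-linear dependence among the $x_i$ modulo $C$, contradicting the standing assumption. The combinatorial heart of the argument is a rank calculation on a matrix built from the Jacobian entries and the partial derivatives of the minimal relation, and this is where Ax's original computation is genuinely delicate.
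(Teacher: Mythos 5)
The paper itself gives no proof of this theorem: it is stated and attributed to Ax, with a citation to \cite{Ax71}. So the only sensible comparison is against Ax's own argument.

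Your two opening reductions are correct and standard. Rewriting the statement in contrapositive form (if no nontrivial $\Q$-linear relation $\sum m_i x_i \in C$ exists, then $\td_C(\bar x,\bar y) \ge n + \rk\Jac(\bar x)$) is exactly right, and the computation showing that $\sum m_i x_i \in C$ already forces $\prod y_i^{m_i} \in C$ via the logarithmic derivative is the correct observation that the two parts of the conclusion collapse to one. Cutting $\Delta$ down to $r = \rk\Jac(\bar x)$ derivations with independent columns is also fine.

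However, your characterisation of the strategy as ``Ax's approach via formal power series'' is backwards. Ax proves the differential-field statement (his Theorem~1) \emph{directly}, by a purely algebraic argument: one takes a minimal transcendence relation, differentiates, and runs a delicate rank/Wronskian computation on a matrix assembled from the Jacobian and the partial derivatives of the relation, all inside the abstract differential field. The power series version (his Theorem~2 and Corollary~1, quoted in the paper as Theorem~\ref{Ax formal ps}) is then an easy \emph{corollary}, since a ring of formal power series with the partial derivatives $\partial/\partial t_j$ is a differential ring whose fraction field has constant field $C$. Your route runs this in the opposite direction: Seidenberg-embed the finitely generated differential subfield into (convergent) power series, then prove the power series statement. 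That reversal is a legitimate alternative that has appeared in later expositions, but it is not Ax's proof, and it raises two issues you have not addressed. First, Seidenberg's embedding theorem applies to finitely generated differential fields over $\Q$ with \emph{commuting} derivations; the statement here allows $\Delta$ to be an arbitrary set of derivations, so you would need either to restrict to the commuting case or to explain why the Jacobian rank can be realised by a commuting subfamily acting on $C(\bar x,\bar y)$. Second, since the power series version is normally deduced \emph{from} the differential field version, your reduction does not by itself save any work: the core rank computation must still be carried out, just in the power series ring instead of abstractly.

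And that core computation is precisely what your proposal leaves as a one-sentence sketch (``assume a relation of minimal complexity, differentiate, manufacture a lower-complexity relation or a $\Q$-linear dependence''). You correctly identify this as the delicate part, but as written it is a gap rather than a proof: the whole difficulty of Ax's theorem lies in making that rank argument go through, and nothing in the proposal commits to how the minimal relation interacts with the Jacobian to force the dichotomy. Everything surrounding it is sound bookkeeping; the theorem itself is not yet proved.
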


An equivalent statement is the power series version of (SC),
restricted to power series with no constant term.
\begin{theorem}[{\cite[Corollary 1]{Ax71}}]\label{Ax formal ps}
  Let $C$ be a field of characteristic zero and let $f_1,\ldots,f_n
  \in \ps{C}{t_1,\ldots,t_r}$ be power series with no constant term.
  If $\td_C(f_1,\exp(f_1),\ldots,f_n,\exp(f_n)) - \rk\Jac(f) < n$ then
  there are $m_i \in \Z$, not all zero, such that $\sum_{i=1}^n m_if_i
  = 0$.
\end{theorem}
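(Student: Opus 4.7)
The plan is to deduce this from the differential-field statement (SC${}_D$) stated just above. Set $R := \ps{C}{t_1,\ldots,t_r}$ and let $F$ be the fraction field of $R$, equipped with the set of commuting $C$-linear derivations $\Delta = \{\partial/\partial t_1,\ldots,\partial/\partial t_r\}$, extended from $R$ to $F$ by the quotient rule. One checks (using that $R$ is a regular local UFD and a differentially constant element of $R$ is visibly an element of $C$) that the common field of constants of $\Delta$ on $F$ is exactly $C$; this is the only preparatory sub-lemma.

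Next I would transfer the power-series exponentials into the solution set $\Gamma$. Each $f_i$ lies in the maximal ideal $\mathfrak{m} = (t_1,\ldots,t_r)R$, so $\exp(f_i) = \sum_{k\ge 0} f_i^k/k!$ converges $\mathfrak{m}$-adically in $R$ and lies in $1+\mathfrak{m} \subseteq F^\times$. The formal chain rule gives $D(\exp f_i) = (\exp f_i)\cdot Df_i$ for each $D \in \Delta$, hence $D(\exp f_i)/\exp(f_i) = Df_i$, so $(f_i,\exp f_i) \in \Gamma$ for every $i$. Moreover the Jacobian matrix $\Jac(f)$ as defined in the statement of the theorem (entries $\partial f_i/\partial t_j$) is literally the Jacobian attached to $\Delta$ in Ax's formulation, so the ranks coincide; the transcendence-degree hypothesis is word-for-word the same since the constant field is $C$.

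Applying (SC${}_D$) to the tuple $((f_1,\exp f_1),\ldots,(f_n,\exp f_n))$, we obtain integers $m_1,\ldots,m_n$, not all zero, such that
\[\textstyle\sum_{i=1}^n m_i f_i \in C \qquad\text{and}\qquad \prod_{i=1}^n (\exp f_i)^{m_i} \in C.\]
The first of these is the crux: since each $f_i$ has zero constant term, so does $\sum m_i f_i$, and an element of $C$ with zero constant term is $0$. This gives the desired $\Q$-linear relation $\sum_{i=1}^n m_i f_i = 0$.

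I do not expect any serious obstacle, because all the real work is absorbed into Ax's theorem. The two small points that need care are (i) checking that the constants of $F$ under $\Delta$ are precisely $C$, and (ii) matching the two notions of Jacobian rank; both are routine. Observe also that we never need the second conclusion $\prod (\exp f_i)^{m_i} \in C$, which is automatic here once $\sum m_i f_i = 0$ and merely serves as a sanity check. The converse direction (that Theorem~\ref{Ax formal ps} implies (SC${}_D$)) would require a separate argument involving realising an abstract differential field inside a power-series field, but that is not what is being claimed.
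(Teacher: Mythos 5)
Your argument is correct, and it is in fact Ax's own derivation of his Corollary~1 from the differential-field theorem; the paper simply cites Ax and does not reproduce a proof. The bookkeeping checks out: each $\exp(f_i)$ lies in $1+\mathfrak{m}\subseteq F^{\times}$, the formal chain rule places each $(f_i,\exp f_i)$ in $\Gamma$, the two notions of Jacobian agree entry-by-entry, and $\sum_i m_i f_i\in C$ together with $C\cap\mathfrak{m}=\{0\}$ forces $\sum_i m_i f_i=0$. The one point you state a little glibly is that the $\Delta$-constants of $F=\mathrm{Frac}(R)$ are exactly $C$: that a constant lying in $R$ belongs to $C$ is indeed immediate, but one must also show that a constant of $F$ actually lies in $R$. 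Write such a constant as $g/h$ with $g,h$ coprime in the UFD $R$; from $h\,\partial_j g = g\,\partial_j h$ one gets $h\mid\partial_j h$ for every $j$, and if $h\in\mathfrak{m}\setminus\{0\}$ had lowest-degree homogeneous part of degree $d\ge 1$, then by Euler's formula (characteristic zero) some $\partial_j h$ would have order $d-1<\mathrm{ord}(h)$, contradicting $h\mid\partial_j h$; hence $h$ is a unit and the constant lies in $R$. With that routine sub-lemma spelled out, the proof is complete.
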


An immediate corollary translates this into a result about analytic
functions.
\begin{theorem}[{\cite[Corollary 2]{Ax71}}]
  Let $C$ be an algebraically closed field of characteristic zero
  which is complete with respect to a non-discrete absolute value. Let
  $f_1,\ldots,f_n$ be analytic functions in some polydisc about the
  origin $0$ in $C^r$ such that each $\exp(f_i)$ is defined and for
  which the $f_i - f_i(0)$ are $\Q$-linearly independent. Then
  \[\td_C(f_1,\exp(f_1),\ldots,f_n,\exp(f_n)) - \rk\Jac(f) \ge n.\]
\end{theorem}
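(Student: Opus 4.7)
The plan is to reduce to the formal-power-series statement in Theorem~\ref{Ax formal ps}. First I would centre the functions at the origin by setting $g_i := f_i - f_i(0)$, so that each $g_i$ vanishes at $0$ and $\exp(f_i) = \exp(f_i(0)) \cdot \exp(g_i)$. Since $f_i(0)$ and $\exp(f_i(0))$ lie in $C$, we have $\td_C(f, \exp(f)) = \td_C(g, \exp(g))$, and $\Jac(f) = \Jac(g)$ because the $f_i$ and $g_i$ differ by constants. So it suffices to prove the inequality for the $g_i$, under the hypothesis that they are $\Q$-linearly independent.

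Next I would pass to Taylor expansions. The map $\tau$ sending an analytic function in the given polydisc to its Taylor series in $\ps{C}{t_1,\ldots,t_r}$ is an injective $C$-algebra homomorphism (by the identity theorem for analytic functions on a connected domain) which commutes with each $\partial/\partial t_j$. The key additional point is that $\tau(\exp g_i) = \exp(\tau g_i)$ in $\ps{C}{t_1,\ldots,t_r}$: since $\tau g_i$ has zero constant term, both the Taylor series of the analytic exponential and the formal exponential are given by the same sum $\sum_{k \ge 0} (\tau g_i)^k/k!$, which is well-defined formally and coincides with the convergent expansion. Hence $\tau$ preserves $\Q$-linear independence, algebraic dependence over $C$, and Jacobian rank, giving
\[\td_C(g, \exp(g)) - \rk \Jac(g) \;=\; \td_C(\tau g, \exp(\tau g)) - \rk \Jac(\tau g).\]

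Finally, the $\tau g_i$ form a $\Q$-linearly independent tuple of formal power series with no constant term, so the contrapositive of Theorem~\ref{Ax formal ps} delivers the required lower bound of $n$ on the right-hand side. There is no substantial obstacle here: the whole argument is a routine translation from analytic to formal data, and the only point requiring care is the compatibility of Taylor expansion with the exponential, which is immediate from the power-series definition of $\exp$ on inputs with zero constant term.
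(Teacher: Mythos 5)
Your proof is correct and matches the paper's intent: the paper records this result only as an \emph{immediate corollary} of Theorem~\ref{Ax formal ps}, and your argument is exactly the expected translation --- centre at the origin to kill the constant terms, pass to Taylor series, observe that the Taylor map $\tau$ is an injective $C$-algebra homomorphism commuting with the derivations $\partial/\partial t_j$ and with $\exp$ on power series with zero constant term, then apply the contrapositive of the formal-power-series theorem. The one point requiring a moment's thought, that $\tau(\exp g_i)=\exp(\tau g_i)$, you justify correctly, since for each multi-index only finitely many terms of $\sum_{k\ge 0}(\tau g_i)^k/k!$ contribute to that Taylor coefficient and so the formal and analytic expansions agree term by term.
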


Robert Coleman strengthened theorem~\ref{Ax formal ps} in
\cite{Coleman80}, in the special case that the power series are
defined over $\bar{\Q}$. For a field $F$, write $\Laurent{F}{t}$ for
the field of Laurent series, that is, the field of fractions of
$\ps{F}{t}$. Let $\mathcal{O}$ be the ring of algebraic integers (a
subring of $\bar{\Q}$) and let $k$ be its field of fractions.
\begin{theorem}
  Let $f_1,\ldots,f_n \in \ps{\bar{\Q}}{t_1,\ldots,t_r}$ be power
  series with no constant term, and suppose they are $\Q$-linearly
  independent. Then
  \[\td_{\Laurent{k}{t_1,\ldots,t_r}}(f_1,\exp(f_1),\ldots,f_n,\exp(f_n))
  \ge n.\]
\end{theorem}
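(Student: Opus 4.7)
The proof strategy is to argue by contradiction, adapting the differential-algebra techniques behind Ax's theorem (Theorem~\ref{Ax formal ps}) and supplementing them with an arithmetic ingredient that exploits the algebraicity of the coefficients of the $f_i$. Since the $f_i$ already lie in the base field, the inequality is equivalent to the statement that $\exp(f_1), \ldots, \exp(f_n)$ are algebraically independent over $\Laurent{k}{t_1,\ldots,t_r}$.

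I would assume for contradiction a nonzero polynomial $P = \sum_{\alpha \in S} c_\alpha Y^\alpha \in \Laurent{k}{t}[Y_1,\ldots,Y_n]$ with $P(\exp(f_1),\ldots,\exp(f_n)) = 0$, chosen to have the minimum number of monomials $|S|$. Applying each derivation $D_j = \partial/\partial t_j$ and using $D_j \exp(f_i) = (D_j f_i)\exp(f_i)$, the chain rule produces another polynomial relation supported on $S$; by minimality, it must be a $\Laurent{k}{t}$-scalar multiple of $P$. Normalizing $c_{\alpha_0} = 1$ for a fixed $\alpha_0 \in S$ and comparing coefficients yields
\[
D_j c_\alpha = c_\alpha \cdot D_j\left(\sum_i (\alpha_{0,i} - \alpha_i) f_i\right) \qquad (\alpha \in S,\ 1 \le j \le r),
\]
which integrates to $c_\alpha = \lambda_\alpha \exp(g_\alpha)$ with $\lambda_\alpha \in \bar{\Q}^\times$ and $g_\alpha = \sum_i (\alpha_{0,i} - \alpha_i) f_i$. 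This is the structural conclusion that underlies Ax's own proof. Feeding the formula for $c_\alpha$ back into $P(\exp f) = 0$ also yields the scalar relation $\sum_{\alpha \in S} \lambda_\alpha = 0$, which is the only constraint produced by the Ax-type argument alone.

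The crux, and the main obstacle, is to force $g_\alpha = 0$ for every $\alpha \in S$. Once this is done, the $\Q$-linear independence of the $f_i$ gives $\alpha = \alpha_0$ throughout, so $P$ reduces to a single monomial, which is impossible since $P \ne 0$ and each $\exp(f_i)$ is a unit in $\ps{\bar{\Q}}{t}$. Ax's argument discharges the analogous step only when the base field is the constants; here the base is much larger than $\bar{\Q}$, and an extra arithmetic input is indispensable. My plan is to invoke Coleman's $p$-adic analysis: using that only finitely many algebraic coefficients occur in $P$ and in the initial segments of the $f_i$ needed at each precision, fix for each sufficiently large rational prime $p$ an embedding $\bar{\Q} \hookrightarrow \C_p$ and examine Newton polygons (or more delicate $p$-adic growth estimates) of $c_\alpha$ and $\exp(g_\alpha)$ inside $\ps{\C_p}{t}$. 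The $p$-adic denominators forced on the coefficients of $\exp(g_\alpha)$ by the factorial denominators in its definition are compatible with the algebraic data defining $c_\alpha \in \Laurent{k}{t}$ only in the degenerate case $g_\alpha = 0$. Turning this intuition into a rigorous estimate is the genuinely hard part of the proof, and it is precisely the place where the restriction to $\bar{\Q}$-coefficients becomes essential — without it one recovers only Ax's weaker statement of algebraic independence over $\bar{\Q}(f_1,\ldots,f_n)$.
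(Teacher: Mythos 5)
The paper gives no proof of this theorem: it is quoted from Coleman's paper~\cite{Coleman80}, consistent with the stated policy that ``most proofs are omitted.'' So your attempt cannot be compared with a proof in the paper; I compare it with what would actually be needed.

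Your Ax-type differential reduction is fine as far as it goes: take a minimal-support relation, differentiate, use minimality to get proportionality, and integrate to conclude $c_\alpha = \lambda_\alpha\exp(g_\alpha)$ with $\lambda_\alpha$ a constant and $g_\alpha$ a $\Z$-linear combination of $f_{\alpha_0}-f_\alpha$. That is standard and correct. However, there are two genuine problems.

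First, and more seriously, you have not examined the theorem statement carefully enough, and the reading you adopt makes it trivially false. You take $k$ to be the field of fractions of the ring of all algebraic integers, hence $k=\bar{\Q}$ and $\Laurent{k}{t}=\bar{\Q}((t))$, and you assert at the outset that ``the $f_i$ already lie in the base field.'' But then \emph{so do} $\exp(f_1),\dots,\exp(f_n)$: for $g\in t\bar{\Q}[[t]]$, $\exp(g)=\sum g^m/m!$ is an honest element of $\bar{\Q}[[t]]$, so every quantity on the left is in $\bar{\Q}((t))$ and the transcendence degree is $0$. For instance, with $n=r=1$ and $f_1=t$, $\exp(t)\in\Q[[t]]$ already, so $\td_{\bar{\Q}((t))}(t,e^t)=0<1$. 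Coleman's theorem is stated over $\mathrm{Frac}(\mathcal{O}[[t_1,\ldots,t_r]])$ --- the fraction field of power series with \emph{algebraic-integer} coefficients --- which is a proper subfield of $\bar{\Q}((t))$, and it is precisely the factorial denominators in $\exp$ that keep $\exp(g_\alpha)$ out of it. With the correct base field it is no longer clear that the $f_i$ themselves lie in the base (their coefficients need not have bounded denominators), so your opening reduction to algebraic independence of the $\exp(f_i)$ alone must also be re-examined. You should have noticed the degeneracy and concluded that $\Laurent{k}{t}$ cannot mean $k((t))$ here.

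Second, the step you yourself call ``the genuinely hard part'' --- forcing $g_\alpha=0$ from $\exp(g_\alpha)\in\Laurent{k}{t}$ --- is exactly what distinguishes Coleman's theorem from Ax's, and you leave it as an unquantified heuristic about Newton polygons and $p$-adic denominators. That heuristic points in the right general direction (Coleman does use $p$-adic analysis to show $\exp(g)$ is transcendental over $\mathrm{Frac}(\mathcal{O}[[t]])$ when $0\neq g\in t\bar{\Q}[[t]]$), but gesturing at it is not a proof of the lemma, and without it the argument collapses back to Ax's weaker conclusion. As it stands the proposal has the right skeleton but misreads the statement and omits the theorem's one essential new ingredient.
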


The uniform version of Ax's theorem is also a theorem.
\begin{theorem}[(USC${}_D$), \cite{DPhil}]
  For each parametric family $(V_c)_{c \in P(C)}$ of subvarieties of
  $T\gm^nS$, with $V_c$ defined over $\Q(c)$, there is a finite set
  $\HV$ of proper algebraic subgroups of $\gm^n$ such that for each $c
  \in P(C)$ and each $(x,y) \in \Gamma^n \cap V_c$, if $\dim V_c -
  \rk\Jac(x,y) < n$, then there is $\gamma \in TS(C)$ and $H \in \HV$
  such that $(x,y)$ lies in the coset $\gamma \cdot TH$.
\end{theorem}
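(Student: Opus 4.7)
The plan is to deduce the statement from Ax's theorem (SC${}_D$) by a compactness argument in the theory $\mathrm{DCF}_0$ of differentially closed fields of characteristic zero. As a preliminary reduction, I stratify the parameter space $P$ into finitely many locally closed subsets on which $\dim V_c$ is constant, using Noetherianity of $P$; it suffices to prove the result on each stratum, so I may assume $\dim V_c = d$ is a fixed integer for all $c \in P$.

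I work inside a sufficiently saturated model $\mathcal{K} \models \mathrm{DCF}_0$ with constant field $C$ extending the original differential field. Enumerate the countably many proper algebraic subgroups of $\gm^n$ as $H_1, H_2, \ldots$, each determined by its lattice of characters. For each $i$, the condition ``$(x,y) \in \gamma \cdot TH_i$ for some $\gamma \in TS(C)$'' is first-order in $\mathrm{DCF}_0$: it is equivalent to $\sum_{j=1}^n m_j x_j \in C$ and $\prod_{j=1}^n y_j^{m_j} \in C$ for each $m = (m_1, \ldots, m_n)$ in a finite generating set of the character lattice of $\gm^n/H_i$, and $C$ is definable in $\mathcal{K}$. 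The remaining conditions $(x,y) \in \Gamma^n \cap V_c$ and $\rk \Jac(x,y) > d - n$ are likewise first-order, the rank condition being a disjunction over non-vanishing of $k \times k$ minors.

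Now form the partial type $\Sigma(x,y,c)$ asserting the hypotheses of the theorem together with the schema ``$(x,y)$ lies in no coset $\gamma \cdot TH_i$ with $\gamma \in TS(C)$'' indexed by $i \in \N$. Ax's theorem, applied in $\mathcal{K}$, implies that $\Sigma$ has no realization: any $(x,y,c)$ satisfying the hypotheses must by (SC${}_D$) lie in some coset $\gamma \cdot TH$ with $H$ proper and $\gamma \in TS(C)$. By compactness a finite sub-type of $\Sigma$ is already inconsistent, and this finite sub-type mentions only finitely many $H_i$; taking $\HV$ to be the collection of those $H_i$ gives the required finite set, and the conclusion transfers back to the original differential field. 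The main obstacle is confirming that both the coset condition and the rank/dimension conditions can be written as first-order schemata in $\mathrm{DCF}_0$ uniformly in the parameter $c$; once that is in place, the contradiction with (SC${}_D$) and the finiteness via compactness are essentially automatic.
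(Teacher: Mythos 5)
Your compactness argument is correct in outline and does prove the statement. The key points all hold up: the coset condition ``$(x,y)\in\gamma\cdot TH$ for some $\gamma\in T\gm^n(C)$'' is indeed equivalent to the quantifier-free condition that $\sum_j m_j x_j$ and $\prod_j y_j^{m_j}$ lie in $C$ for a generating set of characters (for the multiplicative half this uses that algebraic subgroups of $\gm^n$ are split, so Hilbert 90 makes the relevant character map surjective on $C$-points); the Noetherian stratification of $P$ to fix $\dim V_c = d$ is what makes the rank hypothesis a single first-order formula; and $\td_C(x,y)\le\dim V_c$ for $(x,y)\in V_c(\mathcal K)$ with $c\in C$ is what lets you invoke Ax's theorem to refute the type. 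Since both the hypotheses and the (reformulated) coset conclusion are quantifier-free, the resulting implication is a $\forall$-sentence and transfers to the original differential field; you should state this explicitly rather than leaving it implicit in ``transfers back''.

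Two small technical points worth flagging. First, you phrase everything in $\mathrm{DCF}_0$, i.e.\ a single derivation, whereas the statement allows a set $\Delta$ of derivations; the same argument runs in the model companion of fields with finitely many commuting derivations, but if $\Delta$ is infinite one needs an extra reduction (e.g.\ since $\rk\Jac$ is bounded by $2n$, a finite subset of $\Delta$ already realises the rank, and one then has to track the constant field correctly). Second, you only ever need $H$ of codimension one in the enumeration, since that is what Ax's theorem returns; this slightly simplifies the type and makes the finiteness cleaner.

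As for the comparison with the source: the paper gives no proof here and simply cites Kirby's thesis. The treatment there (and in the published version for semiabelian varieties) derives uniformity by first establishing the weak CIT via lattice-geometric/dimension arguments and then combining it with Ax's theorem, rather than by a direct compactness-from-a-type argument. Your route is shorter and more self-contained model theory; what it gives up is any structural information about $\HV$ (e.g.\ bounds or the weak CIT as a byproduct), which the cited approach produces along the way.
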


A corollary deals with complex analytic varieties.
\begin{theorem}[{\cite[Theorem 8.1]{DPhil}}]
  Let $P$ be an algebraic variety and $(V_p)_{p \in P(\C)}$ be a
  parametric family of algebraic subvarieties of $T\gm(\C)^n$. There
  is a finite collection $\HV$ of proper algebraic subgroups of
  $\gm^n$ with the following property:

  If $p \in P$ and $W$ is a connected component of the analytic
  variety $\mathcal{G} \cap V_p$ with analytic dimension $\dim W$
  satisfying $\dim W > \dim V_p - n$, then there is $H \in \HV$ and $g
  \in T\gm(\C)^n$ such that $W$ is contained in the coset $g\cdot TH$.
\end{theorem}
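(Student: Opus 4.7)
The plan is to reduce the complex-analytic statement to the differential-field uniform theorem (USC${}_D$) via a local analytic parametrization of $W$. Pick a smooth point $w_0 \in W$ and an analytic chart $t = (t_1,\ldots,t_d)$ with $d = \dim W$ parametrizing an open neighborhood $U \subs W$ of $w_0$. Composing this chart with the coordinate functions on $T\gm(\C)^n = \ga^n \times \gm^n$ produces convergent power series $x_1(t),\ldots,x_n(t)$ and $y_1(t),\ldots,y_n(t)$; since $W \subs \graph$, we have $y_i(t) = \exp(x_i(t))$ for each $i$.

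View these series as elements of a differential field $F$ (for instance the fraction field of $\cps{t_1,\ldots,t_d}$) equipped with the derivations $\partial/\partial t_j$ and constant field $C = \C$. Then each $(x_i,y_i) \in \Gamma$, the parameter $p$ belongs to $P(\C) = P(C)$ so $V_p$ is defined over $\Q(p) \subs C$, and $(x,y) \in \Gamma^n \cap V_p$. At a generic point of the chart, $\rk \Jac(x) = d$: the projection $\graph^n \to \ga^n$, $(x,y)\mapsto x$, is locally biholomorphic (with inverse $x \mapsto (x,\exp(x))$), so the tangent space $T_{w_0} W$ injects into $\C^n$, which forces the matrix $(\partial x_i/\partial t_j)_{i,j}$ to have rank $d$. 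Combined with $\td_C(x,y) \le \dim V_p$ and the hypothesis $\dim W > \dim V_p - n$, this gives $\dim V_p - \rk\Jac(x,y) < n$, exactly the hypothesis of (USC${}_D$).

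Applying (USC${}_D$) to the parametric family $(V_p)_{p\in P}$ yields a finite collection $\HV$ of proper algebraic subgroups of $\gm^n$ depending only on the family, together with some $g \in T\gm(\C)^n$ and $H \in \HV$ such that the tuple $(x(t),y(t))$ satisfies the polynomial equations cutting out the coset $g \cdot TH$. Hence the open subset $U$ of $W$ is contained in $g \cdot TH$. The analytic subvariety $W \cap (g \cdot TH)$ of the connected variety $W$ then contains a nonempty open set and is therefore all of $W$ (its dimension at $w_0$ equals $\dim W$, so by an identity-principle argument the set of points of $W$ having a neighborhood inside $g \cdot TH$ is open, closed and nonempty in $W$). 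Thus $W \subs g \cdot TH$.

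The main obstacle is the translation between the two settings: one must set up the differential field so that (USC${}_D$) applies cleanly, verifying in particular that $p$ is a constant of the derivations (automatic because $V_p$ is independent of $t$) and that the differential-algebraic notion of $\rk\Jac$ agrees with the analytic rank at a generic point (automatic from the étale nature of $\exp$ restricted to $\graph$). Finiteness and uniformity of $\HV$ transfer directly, since $\HV$ in (USC${}_D$) depends only on the parametric family, not on the particular component $W$ or parameter value $p$.
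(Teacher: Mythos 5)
Your overall strategy is the right one, and it is the expected route from the differential‐algebraic theorem to its complex‐analytic corollary: parametrize $W$ near a smooth point $w_0$ of maximal local dimension by convergent power series, view the resulting tuple $(x(t),y(t))$ as an element of $\Gamma^n\cap V_p$ over the differential field of germs at $0\in\C^d$ with derivations $\partial/\partial t_j$ and constants $\C$, note that $\rk\Jac(x,y)=\rk\Jac(x)=d$ because the projection $\graph\to\ga(\C)^n$ is a local biholomorphism, and invoke (USC${}_D$). Two small bookkeeping items you should make explicit: (USC${}_D$) as stated wants the family over $\Q$ (each $V_c$ defined over $\Q(c)$), whereas $(V_p)_{p\in P}$ here is an arbitrary complex family; this is handled by the standard trick of enlarging $P$ to absorb the coefficients of a defining family for $(V_p)_p$. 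Also the rank you need is the rank of $\Jac$ over the field of germs, i.e.\ the generic rank, which indeed equals the rank at the smooth point $w_0$ where $\dim_{w_0}W=\dim W$.

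There is, however, a genuine gap in the final propagation step. You assert that the set $A$ of points of $W$ having a neighbourhood in $W$ contained in $g\cdot TH$ is open, closed and nonempty, hence all of $W$. Openness and nonemptiness are fine, but closedness can fail when the connected component $W$ is reducible. If $W_1$ is the irreducible component of $W$ through $w_0$ (of dimension $\dim W$) and $W_2$ is a further irreducible component of $W$ meeting $W_1$ only at a point $q$, your argument gives $W_1\subs g\cdot TH$, so $A\sups W_1\minus\{q\}$; but if $W_2\not\subs g\cdot TH$ then $q\notin A$ and $A$ is open without being closed. Worse, a piece $W_2$ of the typical dimension $\dim V_p-n$ (which every component must have, by the intersection inequality in the smooth ambient $T\gm(\C)^n$) cannot be forced into a coset by (USC${}_D$) at all, since for its germ one has $\dim V_p - \rk\Jac = n$, not $<n$. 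So as written your argument proves the statement with ``irreducible component'' in place of ``connected component''; to recover the connected-component statement as given, you need a separate argument for the lower-dimensional irreducible pieces, which the present method does not supply.
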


\section{Two-sorted exponentiation}

Consider the two-sorted structure of the exponential map with the
domain and codomain as separate copies of $\C$, with the full field
structure on each. The difference with the one-sorted case is that we
do not have a chosen isomorphism between the two sorts. In this
setting, we cannot ask about algebraic relations between elements
$a_i$ of the domain and their images $e^{a_i}$ in the codomain. Thus
the relevant version of Schanuel's conjecture in this setting is
weaker. \cite{Zilber00fpe}

\begin{conj}[(2-sorted SC)]
  Let $a_1,\ldots,a_n$ be complex numbers. Then
  \[\td_\Q(a_1,\ldots,a_n) + \td_\Q(e^{a_1},\ldots,e^{a_n}) -
  \Qldim(a_1,\ldots,a_n) \ge 0.\]
\end{conj}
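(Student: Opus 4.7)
The plan is to deduce (2-sorted SC) from (SC). The key observation is that the two-sorted inequality is invariant under the natural $\Q$-linear reduction that makes the one-sorted (SC) directly applicable, and that passing from the one-sorted statement to the two-sorted one is exactly what the subadditivity of transcendence degree does.

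First I would reduce to the case of a $\Q$-linearly independent tuple. Set $d = \Qldim(a_1,\ldots,a_n)$ and, after reordering, take $a_1,\ldots,a_d$ to be a $\Q$-basis for the $\Q$-span of $a_1,\ldots,a_n$. Since each $a_{d+k}$ lies in $\Q(a_1,\ldots,a_d)$, we have
\[ \td_\Q(a_1,\ldots,a_n) = \td_\Q(a_1,\ldots,a_d). \]
On the exponential side, writing $a_{d+k} = \sum_{j=1}^{d} q_{kj} a_j$ with $q_{kj} \in \Q$ and clearing denominators, $e^{a_{d+k}}$ satisfies a monomial equation over $e^{a_1},\ldots,e^{a_d}$, so it is algebraic over $\Q(e^{a_1},\ldots,e^{a_d})$. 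Hence
\[ \td_\Q(e^{a_1},\ldots,e^{a_n}) = \td_\Q(e^{a_1},\ldots,e^{a_d}), \]
and the target inequality collapses to $\td_\Q(a_1,\ldots,a_d) + \td_\Q(e^{a_1},\ldots,e^{a_d}) \ge d$.

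Next I would apply (SC) to the $\Q$-linearly independent tuple $a_1,\ldots,a_d$ to get $\td_\Q(a_1,e^{a_1},\ldots,a_d,e^{a_d}) \ge d$, and combine this with the subadditivity of transcendence degree
\[ \td_\Q(a_1,e^{a_1},\ldots,a_d,e^{a_d}) \le \td_\Q(a_1,\ldots,a_d) + \td_\Q(e^{a_1},\ldots,e^{a_d}) \]
to close the argument.

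There is no genuine obstacle here; (2-sorted SC) is strictly weaker than (SC) precisely because it throws away the information about algebraic interaction between each $a_i$ and its exponential, and subadditivity of transcendence degree is exactly the tool that discards that information. The only care required is the bookkeeping in the reduction to a $\Q$-basis, which in particular uses that $e^{q a_j}$ is algebraic over $\Q(e^{a_j})$ for $q \in \Q$.
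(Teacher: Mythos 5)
Your proof is correct, and the key idea (subadditivity of transcendence degree, $\td_\Q(\bar a) + \td_\Q(e^{\bar a}) \ge \td_\Q(\bar a, e^{\bar a})$) is exactly the one the paper uses. The only difference is that you spend most of the argument reducing to a $\Q$-linearly independent subtuple so that you can invoke the original formulation of (SC); the paper skips this entirely by applying the equivalent predimension version (Conjecture~\ref{predim version}), which already asserts $\td_\Q(\bar a, e^{\bar a}) \ge \Qldim(\bar a)$ for an arbitrary tuple, so the subadditivity inequality alone finishes the job. Your detour is harmless and the bookkeeping in it is right, but it is not needed.
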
 

\begin{prop}
  (SC) $\implies$ (2-sorted SC)
\end{prop}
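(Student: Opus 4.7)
My plan is to reduce to the case of $\Q$-linearly independent tuples, apply (SC) there, and then split the transcendence degree across the two sorts using subadditivity.

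First, I would show that we may assume $a_1, \ldots, a_n$ are $\Q$-linearly independent. Let $a_1, \ldots, a_m$ (after reordering) be a maximal $\Q$-linearly independent subtuple, so $\Qldim(a_1,\ldots,a_n) = m$. For each $i > m$, write $a_i = \sum_{j\le m} q_{ij} a_j$ with $q_{ij} \in \Q$; clearing denominators gives an integer relation, and exponentiating turns it into a multiplicative relation among the $e^{a_j}$. Consequently $a_i$ is algebraic over $\Q(a_1,\ldots,a_m)$ and $e^{a_i}$ is algebraic over $\Q(e^{a_1},\ldots,e^{a_m})$. Hence $\td_\Q(a_1,\ldots,a_n) = \td_\Q(a_1,\ldots,a_m)$ and $\td_\Q(e^{a_1},\ldots,e^{a_n}) = \td_\Q(e^{a_1},\ldots,e^{a_m})$, while $\Qldim$ is unchanged by definition of $m$. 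Thus the inequality to be proved is the same for the full tuple and for the linearly independent sub-tuple, so I assume $n = m$.

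Next, under the assumption that $a_1, \ldots, a_n$ are $\Q$-linearly independent, (SC) immediately yields
\[\td_\Q(a_1, e^{a_1}, \ldots, a_n, e^{a_n}) \ge n.\]
Finally, I invoke the standard subadditivity of transcendence degree over towers of fields: for any tuples $\bar x, \bar y$ in an extension of $\Q$,
\[\td_\Q(\bar x, \bar y) \le \td_\Q(\bar x) + \td_\Q(\bar y).\]
Applying this with $\bar x = (a_1, \ldots, a_n)$ and $\bar y = (e^{a_1}, \ldots, e^{a_n})$ gives
\[\td_\Q(a_1,\ldots,a_n) + \td_\Q(e^{a_1},\ldots,e^{a_n}) \ge n = \Qldim(a_1,\ldots,a_n),\]
which is precisely (2-sorted SC) in the linearly independent case, and hence in general by the reduction above.

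There is no serious obstacle here; the only point that requires a moment of care is the reduction step, specifically checking that the multiplicative side also collapses onto the maximal linearly independent subset, which relies on the observation that the $\Q$-linear relation among the $a_i$'s becomes, after clearing denominators, an integer multiplicative relation among the $e^{a_i}$'s so that each extra $e^{a_i}$ is algebraic (in fact a root of a monomial expression) over the $e^{a_j}$ for $j\le m$.
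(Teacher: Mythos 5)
Your proof is correct and uses the same essential ingredient as the paper's one-line proof, namely the subadditivity inequality $\td_\Q(\bar x) + \td_\Q(\bar y) \ge \td_\Q(\bar x, \bar y)$. The difference is that the paper applies (SC) in the form of version 3 (the predimension inequality $\delta(a_1,\ldots,a_n)\ge 0$, which holds for arbitrary tuples), so it chains $\td_\Q(\bar a) + \td_\Q(e^{\bar a}) \ge \td_\Q(\bar a, e^{\bar a}) \ge \Qldim(\bar a)$ with no reduction at all. You instead worked from version 1, which forced the preliminary reduction to a maximal $\Q$-linearly independent subtuple; that reduction is carried out correctly (your observation that an integer multiplicative relation among the $e^{a_j}$ makes the extra $e^{a_i}$ algebraic over the others is exactly the right point to check), but it is re-deriving the equivalence of versions 1 and 3 of (SC), which the paper has already set up. So the approach is the same in substance, just slightly longer because you start from a less convenient formulation of (SC).
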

\begin{proof}
  $\td_\Q(a_1,\ldots,a_n) + \td_\Q(e^{a_1},\ldots,e^{a_n}) \ge
  \td_\Q(a_1,e^{a_1},\ldots,a_n,e^{a_n})$.
\end{proof}

\section{Raising to powers}

The exponential function is used to define the (multivalued) functions
of raising to a power. For $a,b,\rho \in \C$, write $b=a^\rho$ to mean
that there is $\alpha \in \C$ such that $e^\alpha = a$ and
$e^{\rho\alpha} = b$. Schanuel's conjecture (for exponentiation)
implies statements about these ``raising to powers'' functions. We say
that $x_1,\ldots,x_n$ are \emph{multiplicatively dependent} iff there
are $m_1,\ldots,m_n \in \Z$, not all zero, such that $\prod_{i=1}^n
x_i^{m_i} = 1$. Otherwise they are \emph{multiplicatively
  independent}.

\subsection*{Algebraic powers}

\begin{conj}[(SCP${}_{alg}$), Schanuel Conjecture for raising to
  an algebraic power]
  Let $\rho \in \bar{\Q}$, and $a_1,b_1,\ldots,a_n,b_n \in \C$ such
  that $b_i = a_i^{\rho}$. If we have $\td_\Q(a_1,b_1,\ldots,a_n,b_n) < n$ then
  there are $m_1,\ldots,m_{2n} \in \Z$, not all zero, such that
  $\prod_{i=1}^n a_i^{m_i} b_i^{m_{n+i}} = 1$.

  Equivalently, if the $a_i$ and $b_i$ are multiplicatively
  independent then we have $\td_\Q(a_1,b_1,\ldots,a_n,b_n) \ge n$.
\end{conj}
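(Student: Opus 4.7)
The plan is to apply (SC) directly to a coordinated lift of the data along $\exp$. By the definition of $b_i = a_i^\rho$, for each $i$ we may fix a single $\alpha_i \in \C$ that works simultaneously as a logarithm for $a_i$ and for $b_i$ in the sense that $e^{\alpha_i} = a_i$ and $e^{\rho\alpha_i} = b_i$. I would then feed the $2n$-tuple $\alpha_1, \rho\alpha_1, \ldots, \alpha_n, \rho\alpha_n$ into Schanuel's conjecture, whose conclusion is
\[\td_\Q(\alpha_1, \rho\alpha_1, a_1, b_1, \ldots, \alpha_n, \rho\alpha_n, a_n, b_n) \ge \Qldim(\alpha_1, \rho\alpha_1, \ldots, \alpha_n, \rho\alpha_n).\]

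Next I would show the right-hand side equals $2n$ and the left-hand side is bounded by $n + \td_\Q(a_1, b_1, \ldots, a_n, b_n)$. For the first claim, any $\Q$-linear relation among the $\alpha_i$ and $\rho\alpha_i$ has the form $\sum_i (p_i + q_i \rho)\alpha_i = 0$ with $p_i, q_i \in \Q$; clearing denominators and applying $\exp$ gives $\prod_i a_i^{p_i} b_i^{q_i} = 1$, so the assumed multiplicative independence of the $a_i, b_i$ forces all $p_i, q_i$ to vanish, and the $\Qldim$ is exactly $2n$. For the second claim, this is the one place algebraicity of $\rho$ enters: each $\rho\alpha_i$ lies in $\bar\Q(\alpha_1,\ldots,\alpha_n)$, so $\td_\Q(\alpha_i, \rho\alpha_i : i) = \td_\Q(\alpha_1,\ldots,\alpha_n) \le n$, and additivity of transcendence degree over $\Q(\alpha_i,\rho\alpha_i)$ yields the bound. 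Combining the two gives $\td_\Q(a_i, b_i : i) \ge n$, which is the multiplicative-independence reformulation of (SCP${}_{alg}$); the equivalence of the two formulations in the statement is immediate by contraposition.

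I do not anticipate a real obstacle: the argument is essentially a one-line application of (SC) once the $\alpha_i$ are chosen. The only point that deserves care is the coordination of the lift, namely that a \emph{single} $\alpha_i$ must serve both roles; this is built into the definition of $a_i^\rho$. It is worth noting that the proof breaks precisely when $\rho$ is transcendental, since then $\td_\Q(\alpha_i, \rho\alpha_i)$ can be as large as $2n$ and no contradiction is forced — this is why the analogous conjecture for transcendental powers, treated later in the article, requires genuinely more input.
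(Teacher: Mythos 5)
Your argument is correct, but it proves a slightly weaker implication than the paper does. The paper's Proposition derives (SCP${}_{alg}$) from \emph{(Weak SC)}, whereas you invoke the full strength of (SC) (specifically the predimension form $\td \ge \Qldim$). Since (Weak SC) is strictly weaker than (SC) — the paper even gives an example showing the two are not equivalent — deducing the conjecture from (Weak SC) is a stronger result.

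The underlying computation is the same: choose coordinated logarithms $\alpha_i$ with $e^{\alpha_i}=a_i$ and $e^{\rho\alpha_i}=b_i$, and use $\rho\in\bar\Q$ to conclude $\td_\Q(\alpha_1,\rho\alpha_1,a_1,b_1,\ldots,\alpha_n,\rho\alpha_n,a_n,b_n) \le n + \td_\Q(\bar a,\bar b)$. Where you diverge is in the remaining step. You pass to $\Qldim(\alpha_i,\rho\alpha_i)=2n$ (using multiplicative independence) and then appeal to $\td \ge \Qldim$, which is (SC) itself. The paper instead works directly: assume $\td_\Q(\bar a,\bar b)<n$, deduce $\td_\Q(\alpha_i,a_i,\rho\alpha_i,b_i : i) < 2n$, and then (Weak SC) delivers exactly the desired multiplicative relation $\prod a_i^{m_i}b_i^{m_{n+i}}=1$, with no need to control $\Qldim$ at all. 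Since (Weak SC) has the multiplicative relation built into its conclusion, it is precisely the tool suited to what (SCP${}_{alg}$) asks for. If you had phrased your argument as a direct argument (or contradiction) and applied (Weak SC) at the final step in place of the $\Qldim$ computation, you would recover the paper's stronger proposition with essentially no extra work.

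One small remark on your closing observation: you are right that the proof breaks when $\rho$ is transcendental, but the bound $\td_\Q(\alpha_i,\rho\alpha_i : i) \le n+1$ (not $2n$) is what one actually gets for a single transcendental $\rho$, since adjoining $\rho$ costs one transcendence degree; the issue is that $n+1 + \td_\Q(\bar a,\bar b) \ge 2n$ only gives $\td_\Q(\bar a,\bar b) \ge n-1$, which falls just short — and this is exactly why the generic-power and non-standard-integer variants later in the article need additional input about $\rho$.
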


If $\rho$ is rational then this statement is uninteresting.
\begin{prop}
  (Weak SC) $\implies$ (SCP${}_{alg}$)
\end{prop}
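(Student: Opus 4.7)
The plan is to apply Weak SC directly to a cleverly chosen $2n$-tuple, exploiting the fact that $\rho$ is algebraic to control transcendence degrees.

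First, unfolding the definition of $b_i = a_i^\rho$, I pick $\alpha_1, \ldots, \alpha_n \in \C$ with $e^{\alpha_i} = a_i$ and $e^{\rho \alpha_i} = b_i$. Then I consider the $2n$-tuple
\[(\alpha_1, \rho\alpha_1, \alpha_2, \rho\alpha_2, \ldots, \alpha_n, \rho\alpha_n),\]
whose exponentials are exactly $(a_1, b_1, \ldots, a_n, b_n)$. The goal is to verify the hypothesis of Weak SC for this tuple, namely that
\[\td_\Q(\alpha_1, \rho\alpha_1, a_1, b_1, \ldots, \alpha_n, \rho\alpha_n, a_n, b_n) < 2n.\]

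To verify this, I use that $\rho \in \bar\Q$, so each $\rho\alpha_i$ is algebraic over $\Q(\alpha_i)$; hence $\td_\Q(\alpha_1, \rho\alpha_1, \ldots, \alpha_n, \rho\alpha_n) = \td_\Q(\alpha_1, \ldots, \alpha_n) \le n$. Combined with the standing assumption $\td_\Q(a_1, b_1, \ldots, a_n, b_n) \le n - 1$, additivity of transcendence degree under compositum gives the bound $2n - 1 < 2n$. So Weak SC applies and yields integers $k_1, l_1, \ldots, k_n, l_n$, not all zero, with
\[\prod_{i=1}^n e^{k_i \alpha_i + l_i \rho \alpha_i} = \prod_{i=1}^n a_i^{k_i} b_i^{l_i} = 1.\]
Setting $m_i = k_i$ and $m_{n+i} = l_i$ gives the required multiplicative dependence.

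There is no real obstacle here; the only point worth flagging is that the argument uses $\rho \in \bar\Q$ in an essential way (to conclude that adjoining $\rho\alpha_i$ to $\alpha_i$ costs no transcendence degree). If $\rho$ were transcendental this step would fail, and indeed the analogous conjecture for transcendental $\rho$ is genuinely stronger and treated separately in the paper.
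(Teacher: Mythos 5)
Your proposal is correct and follows the paper's proof essentially verbatim: choose logarithms $\alpha_i$ with $e^{\alpha_i}=a_i$, $e^{\rho\alpha_i}=b_i$, verify the transcendence-degree bound for the $2n$-tuple $(\alpha_i,\rho\alpha_i)_i$, and apply Weak SC. The only addition is that you spell out the justification of the bound (via $\rho\in\bar\Q$ and subadditivity of transcendence degree), which the paper asserts without comment.
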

\begin{proof}
  For each $i$ let $\alpha_i$ be such that $e^{\alpha_i} = a_i$ and
  $e^{\rho\alpha_i} = b_i$. If we have $\td_\Q(a_1,b_1,\ldots,a_n,b_n)
  < n$ then
  $\td_\Q(\alpha_1,a_1,\ldots,\alpha_n,a_n,\rho\alpha_1,b_1,\rho\alpha_n,b_n)
  < 2n$, so by (Weak SC) there are $m_1,\ldots,m_{2n} \in \Z$, not all
  zero, such that we have $\prod_{i=1}^n a_i^{m_i} b_i^{m_{n+i}} = 1$, as
  required.
\end{proof}

The $n=1$ case is known.
\begin{theorem}[Gelfond-Schneider theorem, 1934]
  If $a,\rho$ are algebraic, $b = a^\rho$, $a \neq 0,1$, and $\rho$ is
  irrational, then $b$ is transcendental. (The special case of
  (SCP${}_{alg}$) with $n=1$.)
\end{theorem}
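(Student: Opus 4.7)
The plan is to prove this by the classical Gelfond auxiliary-function method, since (Weak SC) is itself a conjecture and the reduction above cannot be used unconditionally. Assume for contradiction that $a,b,\rho \in \bar{\Q}$ with $a \notin \{0,1\}$, $\rho \notin \Q$, and $b = a^\rho$. Fix a branch $\alpha = \log a$, so $a = e^\alpha$ and $b = e^{\rho\alpha}$, and work inside the number field $K = \Q(a,b,\rho)$ of degree $d$ over $\Q$.

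For a parameter $L$ to be chosen, apply Siegel's lemma to produce a nonzero polynomial $P(X,Y) \in \mathcal{O}_K[X,Y]$ of bidegree at most $(L,L)$ such that the entire function $F(z) \leteq P(a^z, b^z) = P(e^{\alpha z}, e^{\rho\alpha z})$ vanishes at all positive integers $n = 1, \ldots, T$ for some $T$ just below $L^2/d$. Each condition $F(n) = 0$ is a $K$-linear relation among the $(L+1)^2$ coefficients of $P$; the imbalance in favour of the unknowns lets Siegel's lemma furnish a solution with coefficients in $\mathcal{O}_K$ of explicitly controlled height.

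Now extend the zero set by comparing two estimates on $F(n_0)$ for integers $n_0 > T$. Analytically, $F$ has order $1$ with $|F(z)| \le e^{C|z|}$; combined with the $T$ known zeros, a Schwarz--Jensen argument on a disk $|z| \le R \gg n_0$ yields
\[|F(n_0)| \;\le\; e^{CR} \prod_{n=1}^{T} \frac{|n_0 - n|}{R},\]
which is extremely small for appropriate $R$. Arithmetically, $F(n_0) \in K$ has denominator bounded by a fixed power of $\mathrm{den}(a)\mathrm{den}(b)$ to the $Ln_0$, and its $K$-conjugates are similarly bounded; hence if $F(n_0) \neq 0$ the Liouville inequality gives a lower bound $|F(n_0)| \ge H^{-(d-1)}$ for an explicit height $H$. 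Balancing $L, T, R, n_0$ forces the upper bound below the lower one, so $F(n_0) = 0$. Iterating (or, equivalently, forcing vanishing of higher derivatives at the same integers) produces so many zeros that $F \equiv 0$. But irrationality of $\rho$ makes $\alpha$ and $\rho\alpha$ $\Q$-linearly independent, so $a^z$ and $b^z$ are algebraically independent as entire functions of $z$ (by elementary growth considerations, or as a special case of Ax's theorem), contradicting $P \neq 0$.

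The heart of the argument, and the main obstacle, is the parameter calibration: one must choose $L, T, R$ and $n_0$ with enough slack that the analytic upper bound strictly dominates the arithmetic lower bound, and enough room to iterate. This delicate bookkeeping between heights, degrees, and vanishing orders is the technical core of Gelfond--Schneider and is what prevents the theorem from being a formal corollary of the setup in earlier sections.
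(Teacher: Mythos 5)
The paper does not prove this theorem. Under its stated conventions (``theorems are unconditionally proved statements'' and ``most proofs are omitted''), the Gelfond--Schneider theorem is simply recorded as a classical 1934 result, with the parenthetical remark only observing that it is the $n=1$ instance of the conjecture (SCP$_{\mathrm{alg}}$). So there is no internal argument to compare against, and you were right to notice that the reduction (Weak SC) $\implies$ (SCP$_{\mathrm{alg}}$) appearing in this section is conditional and cannot be used for an unconditional proof.

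Taken on its own merits, your outline is a recognisable sketch of the Gelfond auxiliary-function method, and the architecture --- Siegel's lemma to build $F(z)=P(a^z,b^z)$, Schwarz--Jensen interpolation for the analytic upper bound, the Liouville inequality for the arithmetic lower bound, parameter balancing, and the endgame via algebraic independence of $e^{\alpha z}$ and $e^{\rho\alpha z}$ --- is the correct shape. Two points need tightening beyond the calibration you already flag. First, your Siegel budget is internally inconsistent: with $P\in\mathcal{O}_K[X,Y]$ you have $d(L+1)^2$ integer unknowns and each condition $F(n)=0$ contributes $d$ integer equations, so the constraint is $T<(L+1)^2$; the figure $T\lesssim L^2/d$ is what you would write for $P\in\Z[X,Y]$. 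Which convention you adopt changes the height bound from Siegel's lemma and hence the later balancing. Second, ``iterating \ldots produces so many zeros that $F\equiv 0$'' is not automatic, because the Liouville lower bound weakens as $n_0$ grows; the standard repairs are either to invoke a zero estimate for exponential polynomials (the function $\sum c_{ij}e^{(i+j\rho)\alpha z}$ has $(L+1)^2$ distinct frequencies, hence a bounded number of zeros on any disk) or to run the Gelfond variant that imposes and extrapolates high-order vanishing at finitely many points. Either route closes the gap, but neither is a formality, and without one of them the contradiction with $P\neq 0$ does not follow.
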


\subsection*{Generic powers}

{\it See the paper \cite{BKW10} for a proof of the conjecture discussed in this section, in a much improved and generalised form.}

There is no ``correct'' simple form of a conjecture for raising to any
power. For example, let $r = \log(3) / \log(2)$ in $\R$. Then $2^r =
3$, but $\td_\Q(2,3) = 0$. However, there is a simple form of the
statement when the power is generic, that is, not exponentially
algebraic.

\begin{defn}[\cite{Macintyre96}]
  Let $K_{\exp} = \tuple{K;+,\cdot,\exp}$ be an exponential field, and
  let $B \subs K$. An element $\alpha$ of $K$ is said to be
  \emph{exponentially algebraic} over $B$ in $K_{\exp}$ iff there are
  $n,m \in \N$, tuples $a = (a_1,\ldots,a_n) \in K^n$, $b =
  (b_1,\ldots,b_m) \in B^m$, and   $f_1,\ldots,f_n$ in the ring of exponential polynomials $\Z[x_1,\ldots,x_n,y_1,\ldots,y_m]^E$ such that:
  \begin{tightlist}
    \item[i)] $a_1 = \alpha$,
    \item[ii)] For each $i=1,\ldots,n$, $f_i(a,b)=0$, and
    \item[iii)] $ \begin{vmatrix} \frac{\partial f_1}{\partial x_1} &
        \cdots &\frac{\partial
          f_1}{\partial x_n}\\
        \vdots & \ddots & \vdots \\
        \frac{\partial f_n}{\partial x_1} & \cdots &\frac{\partial
          f_n}{\partial x_n} \end{vmatrix} (a,b) \neq 0$
  \end{tightlist}
  When $B = \emptyset$ we say that $\alpha$ is \emph{exponentially
    algebraic} in $K_{\exp}$.
\end{defn}

Warning: The notion of exponential algebraicity depends on the
exponential field. For example, assuming Schanuel's conjecture there are 
real numbers which are exponentially algebraic in $\Cexp$ but are not
exponentially algebraic in $\Rexp$. Schanuel's conjecture even implies that
$\pi$ is such a number.

\begin{conj}[(SCP${}_{gen}$), Schanuel Conjecture for raising to
  a generic power]
  Let $\rho \in\C$, not exponentially algebraic in $\Cexp$, and let
  $a_1,b_1,\ldots,a_n,b_n \in \C$ such that $b_i = a_i^{\rho}$. If the
  $a_i$ and $b_i$ are multiplicatively independent then
  $\td_\Q(a_1,b_1,\ldots,a_n,b_n) \ge n$.
\end{conj}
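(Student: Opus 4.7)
The plan is to argue the contrapositive: assume $(\bar a,\bar b)$ are multiplicatively independent and $\td_\Q(\bar a,\bar b)\le n-1$, and deduce that $\rho$ is exponentially algebraic in $\Cexp$, contradicting the hypothesis.

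First I would pick $\alpha_i\in\C$ with $e^{\alpha_i}=a_i$ and $e^{\rho\alpha_i}=b_i$. Multiplicative independence of $\bar a,\bar b$ transfers to $\Q$-linear independence of the $2n$-tuple $(\alpha_1,\ldots,\alpha_n,\rho\alpha_1,\ldots,\rho\alpha_n)$, because any integer-linear relation among these exponentiates to a multiplicative one. Schanuel's conjecture then gives
\[\td_\Q(\alpha_1,a_1,\ldots,\alpha_n,a_n,\rho\alpha_1,b_1,\ldots,\rho\alpha_n,b_n)\ge 2n.\]
Since $\td_\Q(\bar a,\bar b)\le n-1$ and the complementary contribution is trivially bounded by $\td_\Q(\bar\alpha,\rho\bar\alpha/\bar a,\bar b)\le\td_\Q(\rho,\bar\alpha)\le n+1$, equality must be forced, so $\rho,\alpha_1,\ldots,\alpha_n$ are algebraically independent over $\Q(\bar a,\bar b)$.

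Next, let $W=\loc_\Q(\bar a,\bar b)\subs\gm^{2n}$ with $d:=\dim W\le n-1$, and pick $g_1,\ldots,g_{2n-d}\in\Q[X_1,\ldots,X_{2n}]$ locally generating the defining ideal with Jacobian of rank $2n-d\ge n+1$ at $(\bar a,\bar b)$. With $\Phi(y_0,y_1,\ldots,y_n):=(e^{y_1},\ldots,e^{y_n},e^{y_0 y_1},\ldots,e^{y_0 y_n})$, set $F_j:=g_j\circ\Phi\in\Z[y_0,\ldots,y_n]^E$ (clearing denominators), so $F_j(\rho,\bar\alpha)=0$. Adjoining $3n$ auxiliary variables for $a_i,b_i$ together with the exp-polynomial relations $y_{n+i}-e^{y_i}=0$, $y_{2n+i}-e^{y_0 y_i}=0$ and $g_j(y_{n+1},\ldots,y_{3n})=0$, a block row reduction (clearing the $a,b$-columns of the $g_j$-rows using the identity blocks contributed by the $A,B$-relations) shows that producing a nonzero $(3n+1)\times(3n+1)$ Jacobian minor --- which would witness $\rho$ as exponentially algebraic --- reduces to finding an invertible $(n+1)\times(n+1)$ minor of $J_F(\rho,\bar\alpha)=J_g(\bar a,\bar b)\cdot J_\Phi(\rho,\bar\alpha)$.

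The hard part will be showing $\rank J_F=n+1$, equivalently that the image of $J_\Phi$ is transverse to $\ker J_g=T_{(\bar a,\bar b)}W$. When $d=0$ this is automatic since $J_g$ is injective; the base case $n=1$ is then explicit, with $\det J_F=-P'(a)Q'(b)\cdot a\alpha b\ne 0$ (where $P,Q$ are the minimal polynomials of $a,b$, and $\alpha\ne 0$ since $a\ne 1$ by multiplicative independence). For $n\ge 2$ and $d\ge 1$ I would induct on $n$: a nonzero kernel vector for $J_F$ yields a tangent direction to $\Phi(\C^{n+1})$ lying in $TW$, and analytic integration along this direction produces either a positive-dimensional piece of $\Phi^{-1}(W)$ on which $\rho'$ varies nontrivially --- giving an exp-polynomial relation in $\rho$ alone and directly contradicting non-exponential-algebraicity of $\rho$ --- or a lower-dimensional family with $\rho$ held constant, to which a smaller instance of $(\mathrm{SCP}_{\mathrm{gen}})$ applies after restricting to a subtuple of multiplicatively independent pairs. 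This integration-versus-reduction dichotomy, which is CIT-flavoured in the sense of the intersection-theoretic statements earlier in the article, is the principal technical content of the proof, and is precisely the step at which the hypothesis that $\rho$ is not exponentially algebraic actually does its work.
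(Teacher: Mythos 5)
Your transcendence-degree bookkeeping at the start matches the paper's: pick logarithms $\alpha_i$, apply (SC) to the $2n$-tuple $(\alpha,\rho\alpha)$, and observe that $\td_\Q(a,b)<n$ together with $\td_\Q(\rho,\alpha)\le n+1$ forces equality and hence algebraic independence of $\rho,\alpha$ over $\Q(a,b)$. This is fine. But from there you and the paper part ways, and your route has a genuine gap. The paper uses the fact (nontrivial but known, and relied on as a black box even in its own sketch) that under (SC) exponential algebraicity coincides with the closure operator of the pregeometry attached to $\delta$; so $\rho$ not exponentially algebraic gives $\delta(\alpha,\rho\alpha)\ge 1$ outright, and combined with $\td_\Q(\alpha,\rho,a,b)<2n+1$ this forces $\Qldim(\alpha,\rho\alpha)<2n$, which exponentiates directly to the required multiplicative dependence. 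No Jacobian appears.

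You instead try to manufacture an explicit witness to exponential algebraicity of $\rho$, and the crux --- showing $\rank J_F=n+1$, i.e.\ that $\im J_\Phi(\rho,\alpha)$ meets $T_{(\bar a,\bar b)}W$ trivially --- is exactly the step you label ``the principal technical content'' and then do not prove. The algebraic independence of $\rho,\alpha$ over $\Q(\bar a,\bar b)$ established earlier does \emph{not} imply this transversality: a point can be generic on $W$ while the tangent map still fails to be transverse. The ``integration-versus-reduction dichotomy'' is not an argument as stated: a nonzero vector in $\ker J_F$ need not integrate to an analytic arc in $\Phi^{-1}(W)$ (first-order tangency can be obstructed at higher order), and even granting an arc, a one-parameter family of $(\rho',\alpha')$ with $\Phi(\rho',\alpha')\in W$ does not by itself yield an exponential-polynomial system in $\rho'$ alone with nonvanishing Jacobian, which is what the definition of exponential algebraicity requires. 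The induction on $n$ is also not set up: it is unclear which quantity decreases, and unclear that multiplicative independence survives passage to the ``subtuple.'' In effect you are attempting to reprove, from scratch and in a special case, the predimension characterisation of exponential algebraicity that the paper's proof simply invokes, and that reproof is where the work actually lives; as written the proposal does not close it.
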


\begin{prop}
  (SC) $\implies$ (SCP${}_{gen}$)
\end{prop}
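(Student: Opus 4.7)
The plan is to pick logarithms $\alpha_i$ of the $a_i$ so that simultaneously $e^{\alpha_i}=a_i$ and $e^{\rho\alpha_i}=b_i$; write $\alpha=(\alpha_1,\ldots,\alpha_n)$, $a=(a_1,\ldots,a_n)$, $b=(b_1,\ldots,b_n)$. First I note that the multiplicative independence hypothesis on $(a,b)$ is equivalent to $\Q$-linear independence of the $2n$-tuple $(\alpha,\rho\alpha)$, since any integer linear relation among the $\alpha_i$ and $\rho\alpha_j$ exponentiates to a multiplicative relation among the $a_i$ and $b_j$, and vice versa. Applying (SC) directly to this $2n$-tuple gives $\td_\Q(\alpha,\rho\alpha,a,b)\geq 2n$, which together with $\rho\alpha_i\in\Q(\rho,\alpha_i)$ only yields $\td_\Q(a,b)\geq n-1$ — one short of the target.

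To gain the missing unit I would enlarge the tuple to $(\rho,\alpha,\rho\alpha)$, of $\Q$-linear dimension $\ell\in\{2n,2n+1\}$, and invoke the key input that under (SC) the predimension $\delta$ induces (via its matroid dimension $d(\bar c):=\inf_{\bar c'\supseteq\bar c}\delta(\bar c')$) the pregeometry of exponential algebraicity on $\Cexp$. In particular $\delta(\bar c')\geq d(\rho)\geq 1$ whenever $\bar c'$ contains the non-exponentially-algebraic element $\rho$. Applied to our enlarged tuple this yields
\[
\td_\Q(\rho,\,e^\rho,\,\alpha,\,a,\,\rho\alpha,\,b)\;\geq\;\ell+1,
\]
and since $\rho\alpha_i\in\Q(\rho,\alpha_i)$ the left side equals $\td_\Q(\rho,e^\rho,\alpha,a,b)$.

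I finish by splitting on $\ell$. If $\ell=2n+1$, the crude upper bound $\td_\Q(\rho,e^\rho,\alpha,a,b)\leq \td_\Q(a,b)+(n+2)$ forces $\td_\Q(a,b)\geq n$. If $\ell=2n$, then $\rho$ lies in the $\Q$-span of the remaining $2n$ coordinates, giving an integer relation $N\rho=\sum_i m_i\alpha_i+\sum_j n_j\rho\alpha_j$ with $N\neq 0$. Exponentiating gives $e^{N\rho}=\prod_i a_i^{m_i}\prod_j b_j^{n_j}\in\Q(a,b)$, so $e^\rho$ is algebraic over $\Q(a,b)$; this sharpens the upper bound to $\td_\Q(a,b)+(n+1)$ and again delivers $\td_\Q(a,b)\geq n$, as required.

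The hard part is the middle step: justifying that under (SC) the matroid dimension $d$ coming from the Schanuel predimension agrees with the exponential transcendence degree defined via Macintyre's Jacobian condition, so that non-exponential-algebraicity of $\rho$ forces $\delta\geq 1$ on every supertuple containing $\rho$. This is the standard Zilber/Kirby pregeometry result for the Schanuel predimension; the remainder is routine linear-algebra and transcendence-degree bookkeeping.
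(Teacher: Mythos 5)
Your proof is correct and rests on the same core idea as the paper's sketch: pass to logarithms $\alpha_i$, use the Schanuel predimension $\delta$, and invoke the (hard, but standard) fact that under (SC) the predimension-induced matroid dimension $d$ agrees with exponential transcendence degree, so $d(\rho)\ge 1$. The paper proves the contrapositive and works directly with $\delta(\alpha,\rho\alpha)$, asserting $\delta(\alpha,\rho\alpha)\ge 1$; this needs $\rho$ to be captured via $\rho=(\rho\alpha_1)/\alpha_1\in\Q(\alpha,\rho\alpha)$, i.e.\ it uses that $\ecl$ respects field-algebraic closure, together with $\etd\le\delta$. You instead enlarge to $(\rho,\alpha,\rho\alpha)$ so that only the bare inequality $\delta(\bar c')\ge d(\rho)$ for $\bar c'\ni\rho$ is needed, at the cost of the case split on $\ell=\Qldim(\rho,\alpha,\rho\alpha)$, which you then close correctly by observing that in the $\ell=2n$ case $e^\rho$ becomes algebraic over $\Q(a,b)$. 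That is a perfectly sound, slightly more elementary route to the same place; the extra bookkeeping replaces a mildly stronger pregeometry input. One small imprecision: multiplicative independence of $(a,b)$ is \emph{not} equivalent to $\Q$-linear independence of $(\alpha,\rho\alpha)$ (a multiplicative relation only forces the corresponding linear combination into $2\pi i\Z$, not to vanish). You only use the correct direction, namely that a $\Q$-linear relation among $(\alpha,\rho\alpha)$ exponentiates to a multiplicative relation among $(a,b)$, so the argument is unaffected, but the stated equivalence should be weakened to an implication. You should also note (as the paper also tacitly does) that one may assume some $\alpha_i\ne 0$, since otherwise all $a_i=1$ and the multiplicative-independence hypothesis fails trivially.
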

\begin{proof}[Sketch proof]
  Let $\alpha_i$ be such that $e^{\alpha_i} = a_i$. Suppose
  $\td_\Q(a,b) < n$. Then $\td_\Q(\alpha,\rho,a,b) < 2n + 1$. So
  $\delta(\alpha,\rho\alpha) < 2n + 1 - \Qldim(\alpha,\rho \alpha)$.

  But $\delta(\alpha,\rho\alpha) \ge 1$, using (SC) and the fact that
  $\rho$ is not exponentially algebraic, so $\Qldim(\alpha,\rho \alpha)
  < 2n$. Thus $a,b$ are multiplicatively dependent.
\end{proof}

Alex Wilkie proved two results about raising to generic real powers.
\begin{theorem}[\cite{Wilkie_SCremark1}]
  Let $r$ be a real number not definable in $\Rexp$ (equivalently, $r$
  is not exponentially algebraic in $\Rexp$), and let $a_1,\ldots,a_n$
  be positive real numbers. If the set
  $\{a_1,a_1^r,\ldots,a_n,a_n^r\}$ is multiplicatively independent
  then $\td_\Q(a_1,a_1^r,\ldots,a_n,a_n^r) \ge n$.
\end{theorem}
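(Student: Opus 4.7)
I would argue by contradiction. Suppose $\td_\Q(A, A^r) < n$ (with $A \leteq (a_1, \ldots, a_n)$) while the tuple $\{a_i, a_i^r\}_{i=1}^n$ remains multiplicatively independent. My aim is to produce a system of exponential polynomials witnessing $r$ as exponentially algebraic in $\Rexp$, contradicting the hypothesis on $r$.

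Setting $\alpha_i \leteq \log a_i \in \R$, multiplicative independence translates to $\Q$-linear independence of $(\alpha_1, \ldots, \alpha_n, r\alpha_1, \ldots, r\alpha_n)$, which forces $r \notin \Q$ and each $\alpha_i \neq 0$. Since $\td_\Q(A, A^r) < n$, the $\Q$-Zariski closure of $(A, A^r)$ has codimension at least $n+1$ in $\R^{2n}$, so I can choose polynomials $P_1, \ldots, P_{n+1} \in \Z[X_1, \ldots, X_n, Y_1, \ldots, Y_n]$ vanishing at $(A, A^r)$ with linearly independent gradients there. Consider the exponential polynomials
\[
g_j(t_0, t_1, \ldots, t_n) \leteq P_j(e^{t_1}, \ldots, e^{t_n}, e^{t_0 t_1}, \ldots, e^{t_0 t_n}), \qquad j = 1, \ldots, n+1,
\]
each vanishing at $(r, \alpha_1, \ldots, \alpha_n)$. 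If the Jacobian $\det(\partial g_j/\partial t_k)_{j,k}$ is nonzero there, then $r$ (the distinguished first coordinate of the solution) is exponentially algebraic in $\Rexp$ by the definition given earlier in the article.

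A block manipulation factors this Jacobian as $\prod_{k=1}^n a_k \cdot \det M$, where $M$ is the $(n+1) \times (n+1)$ matrix whose $j$-th row records the pairings of $\nabla P_j(A, A^r)$ against the tangent directions $\Phi'(0) \leteq (0,\, \alpha_1 a_1^r, \ldots, \alpha_n a_n^r)$ (for the $t_0$-column) and $\Psi_k'(a_k) \leteq (e_k,\, r(a_k^r/a_k) e_k)$ for $k = 1, \ldots, n$. Since $\alpha_i \neq 0$, these $n+1$ vectors span the tangent space at $(A, A^r)$ to the raising-to-power surface $\Sigma \leteq \{(x, x^t) : x \in \gm(\R)^n,\, t \in \R\}$, so $\det M \neq 0$ is equivalent to transversality of $V = V(P_1, \ldots, P_{n+1})$ and $\Sigma$ at $(A, A^r)$, which yields the contradiction in the transverse case.

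The hard part is the non-transverse case: when $T_V \cap T_\Sigma \neq 0$ at $(A, A^r)$, the matrix $M$ is singular and the Jacobian argument fails. In this case $V \cap \Sigma$ contains a positive-dimensional real-analytic arc $s \mapsto (x(s), x(s)^{t(s)})$ through $(A, A^r)$ with $x(0) = A$ and $t(0) = r$. I would apply the analytic form of Ax's theorem (Corollary 2 of \cite{Ax71}) to the $2n$ germs $f_i(s) \leteq \log x_i(s)$ and $f_{n+i}(s) \leteq t(s) \log x_i(s)$ to produce integers $m_1, \ldots, m_{2n}$, not all zero, constraining the arc to lie in a coset of the proper algebraic subtorus $H \subset \gm^{2n}$ cut out by the character of exponent $m$. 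The principal remaining obstacle is to upgrade this coset containment to the actual membership $(A, A^r) \in H$ — that is, to force the coset to pass through the identity — using the genericity hypotheses $r \notin \Q$ and $\Q$-linear independence of the $\alpha_i$ to exclude degenerate configurations of the arc.
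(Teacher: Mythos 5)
Your proof has the right overall shape (either $r$ is exponentially algebraic, or there is a positive-dimensional deformation to which one can apply Ax), and the transverse case is handled correctly: the $g_j$ are genuine exponential polynomials over $\Z$ after clearing denominators in the $P_j$ (which are over $\Q$ because $V$ is the $\Q$-Zariski closure), the translation of multiplicative independence of $\{a_i,a_i^r\}$ into $\Q$-linear independence of $(\alpha_1,\dots,\alpha_n,r\alpha_1,\dots,r\alpha_n)$ is valid because everything is real and the real logarithm is single-valued, and the factorisation of the Jacobian that identifies $\det M$ with a transversality pairing is correct.

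The gap you acknowledge in the non-transverse case, however, is not a detail to be tidied up but the heart of the matter, and the suggested route does not close it. What Ax's analytic theorem (Corollary 2 of \cite{Ax71}) gives, in contrapositive form, is a $\Q$-linear dependence among the $f_i - f_i(0)$; exponentiating, this says that $\prod_i x_i(s)^{m_i}\,(x_i(s)^{t(s)})^{m_{n+i}}$ is constant along the arc and equal to its value at $s=0$, namely $\prod_i a_i^{m_i}(a_i^r)^{m_{n+i}}$. In other words, the coset $c\cdot H$ that you produce has $(A,A^r)$ as its coset representative by construction. This is vacuous as far as multiplicative dependence of $(A,A^r)$ is concerned: there is nothing ``degenerate'' about $c \neq 1$, and neither $r \notin \Q$ nor $\Q$-linear independence of the $\alpha_i$ does anything to force $c$ to the identity. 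A further structural problem with this branch is that the arc you select may have $t(s)\equiv r$, in which case all of the above tells you nothing at all about $r$. So as written the non-transverse case is simply unresolved. The paper's (uncited, unpublished) argument, which it attributes to Ax's differential theorem $(\mathrm{SC})_D$, needs to exploit the o-minimal structure and the definability of $r$ over $\emptyset$ in a more essential way than an application of Ax along a single arc; what you have does not yet substitute for that.
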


\begin{theorem}[\cite{Wilkie_SCremark2}]
  Let $\mathcal{R}$ be the structure
  $\tuple{\R;+,\cdot,\exp,\sin\restrict{[-\pi,\pi]}}$, and let $r$ be
  a real number not definable in $\mathcal{R}$. (Equivalently, $r$ is
  not exponentially algebraic in $\Cexp$.) If $a_1,\ldots,a_n$ are
  multiplicatively independent real numbers then
  $\td_\Q(a_1^{1+ir},\ldots,a_n^{1+ir}) \ge n/2$.
\end{theorem}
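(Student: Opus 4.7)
The plan is to apply Ax's theorem (Theorem~\ref{Ax formal ps} or its analytic corollary) to the one-parameter family of functions $s \mapsto (a_j^{1+is})_j$, and then use the o-minimality of $\mathcal{R}$ to specialise from the generic-$s$ bound to the specific $r$ of the theorem.

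Write $\alpha_j = \log a_j \in \R$ (assuming $a_j > 0$) and $y_j(s) = e^{is\alpha_j}$, so that $b_j(s) := a_j^{1+is} = a_j y_j(s)$ and (using $a_j, \alpha_j \in \R$) $\bar b_j(s) = a_j y_j(s)^{-1}$; in particular $b_j \bar b_j = a_j^2$. From the elementary inequality $\td_\Q(b, \bar b) \le \td_\Q(b) + \td_\Q(\bar b) = 2\,\td_\Q(b)$, it suffices to show $\td_\Q(b_1, \bar b_1, \ldots, b_n, \bar b_n) \ge n$. The multiplicative independence of the $a_j$ gives $\Q$-linear independence of the $\alpha_j$, hence of the $f_j(s) := is\alpha_j$. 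The Jacobian of $f$ with respect to $s$ is the $1 \times n$ matrix $(i\alpha_j)$, of rank $1$. Ax's theorem therefore yields $\td_\C(f, y) \ge n + 1$; dividing out the $\td$-$1$ contribution of $f$ leaves the $n$ functions $y_1(s), \ldots, y_n(s)$ algebraically independent over $\C(s)$. Translating back: the image of the map $\C \to \C^{2n}$, $s \mapsto (b(s), \bar b(s))$, is Zariski-dense over $\C$ in the irreducible $n$-dimensional variety $V = \{(u,v) \in \C^{2n} : u_j v_j = a_j^2 \text{ for each } j\}$.

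Now suppose for contradiction that $\td_\Q(b(r), \bar b(r)) < n$ at the given $r$, and let $W \subseteq \C^{2n}$ be the $\Q$-Zariski closure of $(b(r), \bar b(r))$, so $\dim_\C W < n$. The real and imaginary parts of $b_j(s), \bar b_j(s)$ are $a_j\cos(s\alpha_j)$ and $\pm a_j\sin(s\alpha_j)$, all definable in $\mathcal{R}$ from $s$ and the $a_j$ (the restricted sine and cosine extend to all of $\R$ by periodicity inside $\mathcal{R}$). Hence $E := \{s \in \R : (b(s), \bar b(s)) \in W\}$ is $\mathcal{R}$-definable over $\{a_1, \ldots, a_n\}$, and by o-minimality is a finite union of points and open intervals. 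An open interval $I \subseteq E$ would force, by analytic continuation of the defining equations of $W$, the whole image of $s \mapsto (b(s), \bar b(s))$ to lie in $W$, hence in $W \cap V$; but $\dim_\C(W \cap V) \le \dim_\C W < n = \dim V$ contradicts the Zariski density from the Ax step. Thus $E$ is finite, and $r$ is $\mathcal{R}$-definable from $\{a_1, \ldots, a_n\}$.

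The last step delivers a contradiction against the hypothesis ``$r$ is not definable in $\mathcal{R}$'' only after an additional parameter-elimination argument, since this hypothesis reads $r \notin \dcl^{\mathcal{R}}(\emptyset)$, not $r \notin \dcl^{\mathcal{R}}(a_1, \ldots, a_n)$. Using the exchange property of the pregeometry of exponential-algebraic closure on $\Cexp$ (via Wilkie's identification of $\dcl^{\mathcal{R}}$ with $\ecl$ on the complex field interpreted in $\mathcal{R}$ through the strip $\R \times [-\pi, \pi]$), combined with the multiplicative independence of the $a_j$, one should reduce $\mathcal{R}$-definability of $r$ over $\{a_1, \ldots, a_n\}$ to $\mathcal{R}$-definability over $\emptyset$. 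This parameter-elimination is the technical heart of the proof and the step I expect to require the most delicate argument.
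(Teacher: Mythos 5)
Your plan has two problems: a repairable error in the o-minimality step, and a genuine gap (which you flag yourself) in the final parameter-elimination.

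The error: the parenthetical claim that ``the restricted sine and cosine extend to all of $\R$ by periodicity inside $\mathcal{R}$'' is false. If the full sine were $\mathcal{R}$-definable then so would be $\pi\Z$ and hence $\Z$, contradicting o-minimality. Consequently $E=\{s\in\R:(b(s),\bar b(s))\in W\}$ is not in general $\mathcal{R}$-definable, and indeed it can be an infinite discrete set (take $n=1$ with $W$ a single algebraic point), which is incompatible with your assertion that it is a finite union of points and intervals. What is definable over $\bar a$ is $E\cap[-M,M]$ for each fixed $M\in\N$; replacing $E$ by $E\cap[-M,M]$ with $M>|r|$ (and noting $M\in\dcl^{\mathcal{R}}(\emptyset)$) repairs the step and still yields $r\in\dcl^{\mathcal{R}}(\bar a)$.

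The gap: what you obtain is $r\in\dcl^{\mathcal{R}}(a_1,\ldots,a_n)$, while the hypothesis is $r\notin\dcl^{\mathcal{R}}(\emptyset)$; these are not contradictory. Exchange alone does not close this: it only gives that some $a_j\in\dcl^{\mathcal{R}}(r,\bar a\setminus\{a_j\})$, and the set-up is symmetric in the $a_j$, which can carry arbitrary amounts of information not in $\dcl^{\mathcal{R}}(\emptyset)$. The constraint $\td_\Q(b,\bar b)<n$ does force $\td_\Q(\bar a)<n$ (since $a_j=\sqrt{b_j\bar b_j}$), but that does not place $\bar a$ in $\dcl^{\mathcal{R}}(\emptyset)$. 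This remains the missing heart of the argument.

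For comparison with the paper: no proof is given there, but the discussion attributes Wilkie's result directly to the differential-fields statement (SC)${}_D$ rather than to an o-minimality argument. The usual way to exploit ``$r$ not exponentially algebraic'' via (SC)${}_D$ is to produce a derivation $D$ with $Dr\neq 0$ vanishing on the exponential-algebraic closure of $\emptyset$, then apply Ax's theorem in that differential field to $x_j=(1\pm ir)\alpha_j$ and $y_j=b_j,\bar b_j$; the Jacobian hypothesis of Ax's theorem directly encodes $Dr\neq 0$ and so never detours through $\dcl^{\mathcal{R}}(\bar a)$. Your Ax-plus-o-minimality route is closer in spirit to the proof of (SC)${}_\R\implies$(USC)${}_\R$ in \cite{USCR}; it may well be completable, but the parameter-reduction step is not yet in place.
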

These statements were obtained using Ax's theorem (SC)${}_D$. Uniform
versions can be obtained by using (USC)${}_D$, and this is done in the
second case in \cite{Wilkie_SCremark2}.

\subsection*{Non-standard integer powers}
{\it See chapter~6 of Martin Bays' thesis \cite{Bays_thesis} for a better account of non-standard integer powers.}

There is a notion of raising to a non-standard integer power. In
$\Cexp$, a number $z$ lies in $\Z$ iff it satisfies the formula
\[\phi(z) \equiv \forall y[e^y = 1 \to e^{zy} = 1].\]
In any exponential field, the map $x \mapsto x^\rho$ is a single-valued
function precisely when $\phi(\rho)$ holds.

A \emph{non-standard integer} in an exponential field $\Kexp$ with a
non-trivial kernel is an element $\rho \in \Kexp$ such that $\Kexp
\models \phi(\rho)$, but which is not in $\Z$ (the standard integers).
There are elementary extensions of $\Cexp$ (for example, ultrapowers),
in where there are non-standard integers.

For an exponential field $\Kexp$ with a non-standard integer $\rho$, we
isolate the following property.
\begin{description}
\item[(SCP${}_{nsi}$), SC for raising to a non-standard integer power].\\
  If $a_1,\ldots,a_n \in \Kexp$ are such that the $a_i$ and $a_i^\rho$
  are multiplicatively independent then
  $\td_\Q(a_1,a_1^\rho,\ldots,a_n,a_n^\rho) \ge n$.
\end{description}
\begin{conj}
  (SCP${}_{nsi}$) holds for all non-standard integers in all
  ultrapowers of $\Cexp$.
\end{conj}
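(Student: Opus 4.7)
The plan is to apply Ax's theorem (SC${}_D$) on $\Kexp$ equipped with a partial derivation that makes the non-standard integer $\rho$ non-constant. Fix logarithms $\alpha_i \in \Kexp$ with $\exp(\alpha_i) = a_i$, so $\rho\alpha_i$ is a logarithm of $b_i := a_i^\rho$. We apply Ax to the $2n$-tuple $x = (\alpha_1, \rho\alpha_1,\ldots,\alpha_n,\rho\alpha_n)$ together with its exponentials $y = (a_1, b_1,\ldots,a_n,b_n)$.

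The technical core is to construct a derivation $D$ on a field $F \supseteq \Kexp$, with constant field $C$, such that (i) the specific pairs $(\alpha_i, a_i)$ and $(\rho\alpha_i, b_i)$ lie in the differential graph $\Gamma = \{(u,v) : Dv/v = Du\}$; (ii) $D\rho = 1$, so $\rho \notin C$; and (iii) $C \supseteq \Q$. Global $\exp$-compatibility on $\Kexp$ is untenable because $\exp(2\pi i\rho) = 1$ would then force $D\rho = 0$; instead $D$ must be a \emph{partial} exponential derivation, compatible with $\exp$ only at the finitely many pairs of interest. The hypothesis that $\{a_i, b_i\}$ is multiplicatively independent guarantees that $\alpha_i, \rho\alpha_i$ are $\Q$-linearly independent modulo $\ker\exp = 2\pi i\Z^*$, and hence their $\Q$-span meets the kernel trivially, removing the principal obstruction to such a construction.

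Granting $D$, assume for contradiction that $\td_\Q(a,b) < n$. Since any non-standard integer is transcendental over $\Q$, one bounds $\td_C(\alpha,\rho\alpha,a,b) \le \td_C(\alpha,\rho) + \td_C(a,b) \le (n+1) + (n-1) = 2n$, and $\Jac(x)$ is a $1 \times 2n$ row with entries $D\alpha_i$ and $\alpha_i + \rho D\alpha_i$, of rank $1$ (since some $\alpha_i \neq 0$ under the multiplicative independence hypothesis). Therefore $\td_C(x,y) - \rk\Jac(x) \le 2n - 1 < 2n$, and (SC${}_D$) yields integers $s_i, t_i$, not all zero, with $\sum_i(s_i + \rho t_i)\alpha_i \in C$ and $\prod_i a_i^{s_i}b_i^{t_i} \in C$.

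The main obstacle is twofold. First, consistency of $D$ requires handling every algebraic relation among the data over $C$; in particular, non-standard integers $k \in \Z^*$ that appear in relations of the form $\alpha_i + \alpha_j = \alpha_k + 2\pi ik$ (induced by multiplicative relations among the $a_\ell$) must be placed in $C$, which is possible generically but calls for a delicate transcendence basis argument inside $\Kexp$. Second, (SC${}_D$) produces $\prod a_i^{s_i}b_i^{t_i} \in C$ rather than $= 1$; this gap can be closed by choosing $C$ so that $C^\times \cap \gen{a_i, b_i}_{\mathrm{mult}}$ contains only roots of unity, after which raising to a suitable power forces the product to $1$. I expect both points to be handled by the differential-algebraic analysis developed in chapter~6 of \cite{Bays_thesis}.
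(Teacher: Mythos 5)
This statement is a \emph{conjecture} in the paper, not a theorem; under the paper's stated conventions, conjectures are unproved, and no proof is given. Moreover, the paper's 2018 note explicitly directs the reader to chapter~6 of \cite{Bays_thesis} for ``the relationship between non-standard integer powers and the CIT'' --- that is, the status of (SCP${}_{nsi}$) is tied to the Conjecture on Intersections of Tori, a Diophantine conjecture which remains open. So you should not expect a complete argument to exist, and indeed your proposal does not close the gap.

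The two ``obstacles'' you flag at the end are not technicalities awaiting routine treatment: they are the heart of the matter, and they are where your argument breaks down. First, the construction of a derivation $D$ on an extension of an ultrapower $\Kexp$ with $D\rho=1$, $C\supseteq\Q$, and compatibility with $\exp$ at the chosen points is not merely ``a delicate transcendence basis argument.'' In an ultrapower the elements $a_i$, $a_i^\rho$, $\alpha_i$, $\rho\alpha_i$, $\rho$, and the non-standard integers appearing in kernel relations all live in $\Z^\ast$-modules and carry a rich web of Diophantine relations inherited from the ultraproduct; showing these constraints are jointly consistent (and that the resulting constant field $C$ is small enough to be useful) is exactly where CIT-type uniformity enters. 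Second, Ax's theorem (SC${}_D$) delivers only $\sum(s_i+\rho t_i)\alpha_i\in C$ and $\prod a_i^{s_i}b_i^{t_i}\in C$; your fix of ``choosing $C$ so that $C^\times\cap\gen{a_i,b_i}_{\mathrm{mult}}$ contains only roots of unity'' is again a substantial arithmetic requirement, not something one can impose freely while also making the $(\alpha_i,a_i)$ lie in $\Gamma$ and keeping $D\rho=1$. Note also that the two published results in this style --- Wilkie's theorems for real powers and \cite{BKW10} for exponentially transcendental powers --- use an o-minimal/analytic bridge to pass from the formal Ax statement to actual numbers; for ultrapowers of $\Cexp$ no such bridge is available, which is precisely why the non-standard-integer case is expected to require CIT rather than to follow from Ax alone.
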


Note that if $k$ is a nonzero kernel element and $\rho$ is a
non-standard integer then $\exp(k)=1$ shows
that $k$ is exponentially algebraic, and then $\exp(\rho k) = 1$ shows
that $\rho$ is exponentially algebraic. So (SCP${}_{nsi}$) and
(SCP${}_{gen}$) are independent statements.

\subsection*{A field of powers}

It is natural to consider not just a single (multivalued) power function, but a field of powers. This was done in Zilber's paper \cite{Zilber03powers}.
This setup can be captured by another
two-sorted model, $\ra{\C}{\exp}{\C}$, in this case where the domain
sort has the structure of a $K$-vector space for some field $K$, not
the full field structure. We take $K$ to be a subfield of $\C$ of
finite transcendence degree $d$, and the domain to have the natural
$K$-vector space structure. The Schanuel conjecture for this setting
is as follows. 
\begin{conj}[(SCP${}_K$), SC for raising to powers from the field $K$]
  Let $a_1,\ldots,a_n$ be complex numbers. Then
  \[\Kldim(a_1,\ldots,a_n)  + \td_\Q(e^{a_1},\ldots,e^{a_n}) -
  \Qldim(a_1,\ldots,a_n) \ge -d.\]
\end{conj}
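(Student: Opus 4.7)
The plan is to derive (SCP${}_K$) directly from (SC) by reducing the $\Kldim$ term to a transcendence-degree statement. The key algebraic observation is that, since $K$ has only $d$ transcendentals over $\Q$, the $\Q$-transcendence degree of any tuple inside the $K$-span of itself is controlled by its $\Kldim$ up to an additive $d$.

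First I would apply (SC) in its version~\ref{predim version} to the tuple $(a_1,\ldots,a_n)$, obtaining
\[\td_\Q(a_1,e^{a_1},\ldots,a_n,e^{a_n}) \ge \Qldim(a_1,\ldots,a_n).\]
By subadditivity of transcendence degree, the left-hand side is at most $\td_\Q(a_1,\ldots,a_n) + \td_\Q(e^{a_1},\ldots,e^{a_n})$, so it suffices to prove the purely algebraic inequality
\[\td_\Q(a_1,\ldots,a_n) \le \Kldim(a_1,\ldots,a_n) + d.\]

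For this, I set $m = \Kldim(a_1,\ldots,a_n)$ and, reindexing, assume that $a_1,\ldots,a_m$ form a $K$-basis of the $K$-span of $\{a_1,\ldots,a_n\}$. Then every $a_j$ lies in $K(a_1,\ldots,a_m)$, so
\[\Q(a_1,\ldots,a_n) \subs K(a_1,\ldots,a_m),\]
and the tower law gives $\td_\Q K(a_1,\ldots,a_m) = \td_\Q K + \td_K(a_1,\ldots,a_m) \le d + m$, yielding the desired bound. Combining the three inequalities and rearranging gives
\[\Kldim(a_1,\ldots,a_n) + \td_\Q(e^{a_1},\ldots,e^{a_n}) - \Qldim(a_1,\ldots,a_n) \ge -d,\]
as required.

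There is no genuine obstacle here beyond invoking (SC): the entire argument is a one-line subadditivity estimate plus the tower-law bound for $K$ over $\Q$. The slightly delicate point worth double-checking is that the reduction really uses only $\td_\Q K \le d$ and not algebraic independence of a chosen transcendence basis of $K$, which is indeed the case since the bound $\td_K(a_1,\ldots,a_m)\le m$ is automatic.
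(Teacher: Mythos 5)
Your argument is correct and the key step --- the bound $\td_\Q(a_1,\ldots,a_n) \le \Kldim(a_1,\ldots,a_n) + d$ via choosing a $K$-basis of the span and using the tower law through $K$ --- is the same as the paper's chain $\Kldim(a_1,\ldots,a_n) + d \ge \td_K(a_1,\ldots,a_n) + \td_\Q(K) \ge \td_\Q(a_1,\ldots,a_n)$. The only organisational difference is that you start from (SC) and re-derive the subadditivity reduction inline, whereas the paper states the implication from the weaker hypothesis (2-sorted SC), having already recorded (SC) $\implies$ (2-sorted SC) as a separate proposition; the mathematical content is identical.
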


\begin{prop}
  (2-sorted SC) $\implies$ (SCP${}_K$)
\end{prop}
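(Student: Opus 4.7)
The plan is to compare the first summand of (SCP${}_K$), namely $\Kldim(a_1,\ldots,a_n)$, against the first summand of (2-sorted SC), namely $\td_\Q(a_1,\ldots,a_n)$, and to observe that the difference is controlled by the transcendence degree $d$ of $K$ over $\Q$. The remaining two summands $\td_\Q(e^{a_1},\ldots,e^{a_n}) - \Qldim(a_1,\ldots,a_n)$ appear identically in both conjectures, so the entire argument reduces to a linear-algebra/transcendence-degree inequality of the form
\[ \td_\Q(a_1,\ldots,a_n) \;\le\; \Kldim(a_1,\ldots,a_n) + d. \]

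To establish this inequality, first set $m \leteq \Kldim(a_1,\ldots,a_n)$ and choose a sub-tuple $a_{i_1},\ldots,a_{i_m}$ forming a $K$-basis of the $K$-span of the $a_j$. Then every $a_j$ is a $K$-linear combination of the $a_{i_\ell}$, so $\Q(a_1,\ldots,a_n) \subs K(a_{i_1},\ldots,a_{i_m})$. Applying the additivity of transcendence degree gives
\[ \td_\Q(a_1,\ldots,a_n) \;\le\; \td_\Q K(a_{i_1},\ldots,a_{i_m}) \;=\; \td_\Q(K) + \td_K(a_{i_1},\ldots,a_{i_m}) \;\le\; d + m, \]
which is exactly the desired bound.

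Finally, combine this with (2-sorted SC). Substituting $\td_\Q(a_1,\ldots,a_n) \le \Kldim(a_1,\ldots,a_n) + d$ into the left-hand side of (2-sorted SC) and rearranging yields
\[ \Kldim(a_1,\ldots,a_n) + \td_\Q(e^{a_1},\ldots,e^{a_n}) - \Qldim(a_1,\ldots,a_n) \;\ge\; -d, \]
which is (SCP${}_K$). There is no real obstacle here; the only point to be careful about is that $K$ is fixed of finite transcendence degree $d$ over $\Q$, which is explicit in the setup, and that the bound $\td_\Q(\bar a) \le \Kldim(\bar a) + d$ is a purely field-theoretic fact requiring no input from Schanuel's conjecture.
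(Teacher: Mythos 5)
Your proof is correct and is essentially the paper's proof: both reduce the implication to the purely field-theoretic inequality $\td_\Q(\bar a) \le \Kldim(\bar a) + d$, established via the chain $\td_\Q(\bar a) \le \td_\Q(K) + \td_K(\bar a) \le d + \Kldim(\bar a)$, with the other two summands of (SCP${}_K$) matching (2-sorted SC) term for term. Your extra step of picking an explicit $K$-basis just unwinds the inequality $\td_K(\bar a) \le \Kldim(\bar a)$ that the paper asserts directly.
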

\begin{proof}
\begin{eqnarray*}
  \Kldim(a_1,\ldots,a_n) + d & \ge & \td_K(a_1,\ldots,a_n) + d \\
  & \ge & \td_K(a_1,\ldots,a_n) + \td_\Q(K) \\
  & \ge & \td_\Q(a_1,\ldots,a_n)
\end{eqnarray*}
\end{proof}

\begin{prop}
  (SCP${}_{\bar{\Q}}$) $\implies$ (SCP${}_{alg}$)
\end{prop}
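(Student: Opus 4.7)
The plan is to apply (SCP$_{\bar{\Q}}$) directly to a $2n$-tuple of logarithms of the $a_i$ and $b_i$, in close parallel with the earlier proof that (Weak SC) $\implies$ (SCP$_{alg}$). Since $K = \bar{\Q}$ has transcendence degree $d = 0$ over $\Q$, the hypothesis (SCP$_{\bar{\Q}}$) states that for any complex numbers $c_1,\ldots,c_N$,
\[
\ldim_{\bar{\Q}}(c_1,\ldots,c_N) + \td_\Q(e^{c_1},\ldots,e^{c_N}) - \Qldim(c_1,\ldots,c_N) \ge 0.
\]

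Given the data of (SCP$_{alg}$) -- namely $\rho \in \bar{\Q}$ and tuples $a_1,b_1,\ldots,a_n,b_n$ with $b_i = a_i^\rho$ -- I would first choose $\alpha_1,\ldots,\alpha_n \in \C$ with $e^{\alpha_i} = a_i$ and $e^{\rho\alpha_i} = b_i$, which exist by definition of the raising-to-power relation. Then I would apply (SCP$_{\bar{\Q}}$) to the $2n$-tuple
\[
c = (\alpha_1,\ldots,\alpha_n, \rho\alpha_1,\ldots,\rho\alpha_n).
\]
The exponentials $e^{c_i}$ are precisely $a_1,\ldots,a_n,b_1,\ldots,b_n$, so the middle term becomes $\td_\Q(a_1,b_1,\ldots,a_n,b_n)$.

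The two remaining ingredients are straightforward observations. First, because $\rho$ is algebraic, each $\rho\alpha_i$ lies in the $\bar{\Q}$-span of $\alpha_i$, so $\ldim_{\bar{\Q}}(c) = \ldim_{\bar{\Q}}(\alpha_1,\ldots,\alpha_n) \le n$. Second, assuming (as in the equivalent form of (SCP$_{alg}$)) that the $a_i, b_i$ are multiplicatively independent, I claim $\Qldim(c) = 2n$: any nontrivial $\Q$-linear relation $\sum q_i \alpha_i + \sum q_{n+i}(\rho\alpha_i) = 0$ may be cleared of denominators to an integer relation, and applying $\exp$ gives $\prod a_i^{m_i}\prod b_i^{m_{n+i}} = 1$, contradicting multiplicative independence.

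Substituting these three pieces into the (SCP$_{\bar{\Q}}$) inequality yields $n + \td_\Q(a_1,b_1,\ldots,a_n,b_n) - 2n \ge 0$, i.e. $\td_\Q(a,b) \ge n$, which is exactly (SCP$_{alg}$). There is essentially no hard step here; the only point requiring a moment's care is the translation between multiplicative independence of $(a,b)$ and $\Q$-linear independence of the logarithmic preimages, and this relies only on $\exp$ sending $\Q$-linear combinations to multiplicative ones.
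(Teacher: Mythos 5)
Your proposal is correct and follows essentially the same route as the paper: apply (SCP$_{\bar{\Q}}$) with $d=0$ to the $2n$-tuple $(\bar\alpha,\rho\bar\alpha)$, bound $\ldim_{\bar{\Q}}$ above by $n$, and use multiplicative independence of $(\bar a,\bar b)$ to pin $\Qldim$ at $2n$. If anything, your version is slightly more careful in making explicit that the chosen $\alpha_i$ must satisfy both $e^{\alpha_i}=a_i$ and $e^{\rho\alpha_i}=b_i$, which the paper leaves implicit.
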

\begin{proof}
  Let $\rho \in \bar{\Q}$, and $a_1,b_1,\ldots,a_n,b_n \in \C$ such
  that $b_i = a_i^{\rho}$, and suppose the $a_i$ and $b_i$ are
  multiplicatively independent. For each $i$, choose $\alpha_i \in \C$
  such that $\exp(\alpha_i) = a_i$. Then the $\alpha_i$ and
  $\rho\alpha_i$ are $\Q$-linearly independent. Since $K = \Q(\rho)$,
  we have $d = 0$. Thus from (SCP${}_{\bar{\Q}}$) we get
  \[\Kldim(\bar{\alpha},\rho \bar{\alpha}) + \td_\Q(\bar{a},\bar{b}) -
    \Qldim(\bar{\alpha},\rho \bar{\alpha}) \ge 0.\] 
   
  The $\alpha_i$ and $\rho\alpha_i$ are $\Q$-linearly independent, and
  $\Kldim(\bar{\alpha},\rho \bar{\alpha}) \le n$, so this becomes
  \[\td_\Q(\bar{a},\bar{b}) - 2n \ge - n\]
  which gives the result.
\end{proof}

However, if $\rho$ is a non-algebraic power then, taking $K =
\Q(\rho)$ we have $d=1$, and so (SCP${}_{\Q(\rho)}$) does not imply
(SCP${}_{gen}$), nor (SCP${}_{nsi}$).

\section[The CIT]{The Conjecture on Intersections of Tori \\ with
  algebraic varieties}

{\it Added in 2018: The CIT is now commonly known as the multiplicative case of the Zilber-Pink conjecture.}

Boris Zilber gave in \cite{Zilber02esesc} a conjecture on
intersections of tori with algebraic varieties (CIT) which is
connected to the uniformity in Schanuel conditions. It has a purely
algebraic (Diophantine) statement, as opposed to the statements above
which involve some sort of analytic exponential map. Surprisingly, it
can also be seen as a Schanuel condition itself.

As before, we identify algebraic varieties with their complex points.
\begin{defn}
  If $U$ is a smooth, irreducible algebraic variety, $V,W$ are
  irreducible subvarieties of $U$ and $X$ is an irreducible component
  of $V \cap W$ then $X$ is said to be a \emph{typical component} of
  the intersection iff $\dim X = \dim V + \dim W - \dim U$ and it is
  said to be an \emph{atypical component} iff $\dim X > \dim V + \dim
  W - \dim U$.
\end{defn}
Since $U$ is smooth, and we are considering the points in an
algebraically closed field, it is impossible to have $\dim X < \dim V
+ \dim W - \dim U$.

\begin{conj}[(CIT)]
  For any algebraic subvariety $V$ of $G = \gm^n(\C)$, defined over
  $\bar{\Q}$, there is a finite collection $\HV$ of proper algebraic
  subgroups of $G$ such that for any algebraic subgroup $T$ of $G$, if
  $X$ is an atypical component of $V \cap T$ in $G$ then there is $H
  \in \HV$ such that $X \subs H$.
\end{conj}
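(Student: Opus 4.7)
The plan is a Pila--Zannier-style strategy: lift atypical intersections via the logarithm to the universal cover $\C^n$ of $\gm(\C)^n$, use o-minimality and point counting to control the locus of linear subspaces producing such intersections, and combine this with an arithmetic lower bound on Galois orbits of torsion cosets to force all atypical components into a finite list $\HV$ of proper algebraic subgroups.

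First I would reduce to the case that $V$ is irreducible and not itself contained in a translate of a proper algebraic subgroup of $G$; otherwise one passes to the quotient by the smallest such subgroup and inducts on $n$. For each algebraic subgroup $T$ of $G$, the preimage $\exp^{-1}(T)$ is a $\Q$-linear subspace $LT$ of $\C^n$ shifted by $2\pi i \Z^n$, and $X$ being atypical in $V \cap T$ corresponds upstairs to a component of $\exp^{-1}(V) \cap LT$ whose complex dimension exceeds $\dim V + \dim LT - n$. Restricted to a fundamental domain for $2\pi i \Z^n$, the analytic variety $\exp^{-1}(V)$ is definable in the o-minimal structure $\Ran$ expanded by the restricted exponential, so the collection of linear subspaces that give atypicality can itself be cut out definably.

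The main technical step is to set up, for fixed $V$, a definable family $Z$ parametrising precisely those rational linear subspaces $LT$ which produce atypical components, and to apply the Pila--Wilkie counting theorem: the number of rational points of height at most $H$ in $Z \minus Z^{\alg}$ grows subpolynomially in $H$. The algebraic part $Z^{\alg}$ decomposes into finitely many positive-dimensional semi-algebraic families, each of which I would identify with the family of translates $LH$ for $H$ a fixed proper algebraic subgroup of $G$; these finitely many $H$ form the required $\HV$. An atypical component $X \subs V \cap T$ with $LT \notin Z^{\alg}$ must then, via its large Galois orbit inside $Z$, violate Pila--Wilkie.

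The hard part --- and the reason CIT remains open in the stated generality --- is producing the matching arithmetic lower bound: one needs a uniform ``Bogomolov for positive-dimensional subtori'' statement saying that a torus $T$ witnessing a genuinely atypical intersection has a Galois orbit of size growing faster than the Pila--Wilkie bound permits. This is known by Bombieri--Masser--Zannier when $\dim V = 1$, and partially in higher dimension, but I do not see how to avoid it in general; any honest completion of this plan would require either a new height input for cosets of subtori of dimension $\ge 1$, or a reduction argument that feeds higher-dimensional atypical data back into the curve case. In the absence of such a tool I would aim only for conditional or restricted statements --- for instance $V$ defined over $\Qab$, or a fixed upper bound on the codimension of the admissible $T$ --- where the required orbit estimate is available.
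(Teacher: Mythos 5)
This is a conjecture, not a theorem: (CIT) is the multiplicative case of what is now called the Zilber--Pink conjecture, and it is open. The paper states it as a conjecture precisely because there is no proof to give; the paper's convention is that conjectures are unproved and only propositions (relationships between conjectures) carry proofs. So there is no ``paper's own proof'' for you to match, and any purported complete proof would be wrong.

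Your proposal correctly recognises this. The Pila--Zannier strategy you sketch is indeed the standard modern approach to Zilber--Pink-type statements, and you have identified the genuine obstruction accurately: the missing arithmetic ingredient is a sufficiently strong lower bound on Galois orbits of the cosets of positive-dimensional subtori that witness atypical intersections, uniformly enough to beat the Pila--Wilkie upper bound. This is exactly why the known unconditional results are restricted to curves (Bombieri--Masser--Zannier, Maurin) or to intersections with subgroups of small dimension or large codimension (Habegger--Pila), and why the general case is open. One small addition worth making explicit: controlling the algebraic part $Z^{\alg}$ in your counting set is itself nontrivial and in this setting is handled by a functional-transcendence input --- precisely the Ax--Schanuel theorem (SC${}_D$) and its uniform version discussed elsewhere in this paper --- which shows that positive-dimensional algebraic families of ``bad'' linear subspaces must come from genuine algebraic subgroups. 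You gesture at this but do not name it; it is the place where the differential-algebraic side of the paper enters the Diophantine story. With that caveat, your assessment --- that only conditional or restricted versions are currently within reach --- is the correct conclusion, and no honest ``proof'' of (CIT) as stated should be expected.
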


There is a version for parametric families of subvarieties as well.
\begin{conj}[(CIT${}_{param}$), CIT with parameters]
  Let $P$ be a complex algebraic variety and let $(V_p)_{p \in P}$ be
  a parametric family of subvarieties of $G = \gm^n(\C)$, the family
  defined over $\bar{\Q}$.

  Then there is a finite collection $\HV$ of proper algebraic
  subgroups of $G$, a natural number $t = t_V$ and functions $c_i:
  P\cross G \to G$ for $i=1,\ldots,t$ such that for any coset $T\cdot
  g$ of any algebraic subgroup of $G$, if $X$ is an atypical component
  of $V_p \cap T\cdot g$ in $G$ then there is $H \in \HV$ and $i \in
  \{1,\ldots,t\}$ such that $X \subs H\cdot c_i(p,g)$.
\end{conj}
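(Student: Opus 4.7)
The plan is to deduce (CIT${}_{param}$) from the (non-parametric) (CIT) by absorbing both the parameter $p \in P$ and the coset translator $g \in G$ into the coordinates of a larger ambient torus, applying (CIT) there, and then reading the conclusion back down to $G = \gm^n$.

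First, by stratifying $P$ into finitely many locally closed pieces that each admit a locally closed immersion into some torus $\gm^N$ (achieved by embedding an affine chart into affine space and further subdividing to avoid coordinate hyperplanes) and handling each stratum separately, I may assume $P \subseteq \gm^N$ with $N > \dim P$, and that the total space $V := \{(p,x) : x \in V_p\}$ is a $\bar{\Q}$-subvariety of $\gm^{N+n}$. I then form the enlarged $\bar{\Q}$-subvariety $\tilde V := V \times G$ in $\gm^N \times G \times G = \gm^{N+2n}$ (using coordinates $(p',v,g')$). For each algebraic subgroup $T \le G$, set $H_T := \{(1,t,1) : t \in T\}$, a subgroup of $\gm^{N+2n}$. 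The coset $(p,g,g) \cdot H_T$ meets $\tilde V$ exactly in $\{(p,v,g) : v \in V_p \cap T \cdot g\}$, and the dimension count --- typical intersection dimension $(\dim P + d + \dim T - N - n)$ versus $(d + \dim T - n)$, where $d = \dim V_p$ --- shows that when $N > \dim P$, every atypical component $X$ of $V_p \cap T \cdot g$ in $G$ lifts to an atypical component of the enlarged intersection in $\gm^{N+2n}$.

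Now (CIT) applied to $\tilde V$ yields a finite collection $\mathcal H'$ of proper algebraic subgroups of $\gm^{N+2n}$ containing every atypical component in the enlarged setting. For each $H' \in \mathcal H'$, set $H'' := H' \cap (\{1\} \times \gm^n \times \{1\})$, a subgroup of $\gm^n$; whenever $H'$ contains $\{(p,v,g) : v \in X\}$, projection to the $v$-coordinate gives $X \subseteq H'' \cdot c_{H'}(p,g)$ for an algebraic coset-representative map $c_{H'} : P \times G \to \gm^n$ encoded by $H'$. The finite family $\HV := \{H'' : H' \in \mathcal H'\}$ together with the corresponding finite list of $c_{H'}$ then furnishes the required data.

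The main obstacle, beyond the unproven status of (CIT), is ensuring that each extracted $H''$ is a \emph{proper} subgroup of $\gm^n$ so that the conclusion $X \subseteq H'' \cdot c_{H'}(p,g)$ is actually informative: a priori some $H' \in \mathcal H'$ could slice to all of $\gm^n$ along the $v$-direction, producing only the vacuous inclusion $X \subseteq \gm^n$. The correspondence between atypical components in the enlarged and original settings is many-to-one, and one must argue that the $H'$ giving nontrivial vertical slices already cover every genuine atypical component of $V_p \cap T \cdot g$, discarding the rest. Secondary technical matters --- gluing the finite data produced on each stratum of $P$ into globally defined functions $c_i$ on $P \times G$, and checking that $\tilde V$ inherits its $\bar{\Q}$-definition from $V$ --- are routine, since only finitely many strata and finitely many subgroups appear in total.
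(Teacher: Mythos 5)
You are attempting to prove the implication (CIT) $\implies$ (CIT${}_{param}$); the paper itself gives no proof of this, only a citation to Zilber, so I evaluate your argument on its own terms. There is a genuine gap, and it is not the one you flag. The hypothesis of (CIT) is an intersection $V \cap T$ with $T$ an algebraic \emph{subgroup} of $\gm^n$, and its conclusion is containment of atypical components in a proper algebraic \emph{subgroup} $H$ (which necessarily contains the identity). But your construction intersects $\tilde V$ with the \emph{coset} $(p,g,g)\cdot H_T$, which is not a subgroup, so (CIT) as stated does not apply to it at all. This is not a technicality to be finessed: handling cosets instead of subgroups, and correspondingly weakening the conclusion to containment in a coset $H\cdot c$ rather than in $H$ itself, is precisely the substance of (CIT${}_{param}$). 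Moreover, even if one tries to press on, the conclusion of (CIT) manifestly cannot hold in your setup: the lifted component $X' = \{(p,v,g) : v \in X\}$ lies over the fixed pair $(p,g)$, and whenever $p$ and $g$ are multiplicatively generic (no integer-exponent relation holds among their coordinates) no proper algebraic subgroup of $\gm^{N+2n}$ can contain $X'$. So the statement ``$X' \subseteq H'$ for some $H' \in \mathcal{H}'$'' that you invoke is false in general, independently of the applicability issue.

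The obstacle you do identify --- that the sliced subgroup $H'' = H' \cap (\{1\}\times\gm^n\times\{1\})$ might equal all of $\gm^n$ --- is real but secondary; it would only become the binding issue once the coset problem is solved. Your extraction mechanism from $H'$ to the coset $H''\cdot c_{H'}(p,g)$ is in fact sound if one is handed a coset version of (CIT) as input, but that input is essentially (CIT${}_{param}$) restricted to a trivial parameter space, so the argument as written is circular. To repair it one would need to absorb the translate $g$ into the variety rather than the subgroup --- for instance by working with $V^* = \{(x,y) : xy^{-1} \in V\}$, which is still over $\bar\Q$, and intersecting with the subgroup $T\times\gm^n$ --- and then control the resulting dimension shift and the uniformity in $g$; this, and the further passage to finitely many coset-representative functions $c_i$, is the content of Zilber's Theorem 1 and Corollary 1, and none of it falls out of (CIT) by the direct blow-up you propose.
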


\begin{prop}[{\cite[Theorem 1, Corollary 1]{Zilber02esesc}}] \ \\
  (CIT) $\implies$ (CIT${}_{param}$)
\end{prop}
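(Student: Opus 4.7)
The plan proceeds in two reductions: first absorb the passage from cosets to subgroups into a translation, and then handle the parametric family by Noetherian induction combined with (CIT) applied to generic fibres.

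\textbf{Step 1 (cosets to subgroups).} Observe that $X$ is an atypical component of $V_p \cap Tg$ in $G$ if and only if $X\cdot g^{-1}$ is an atypical component of $(V_p \cdot g^{-1}) \cap T$. Enlarge the parameter space from $P$ to $Q \leteq P \cross G$ and replace the given family by $W_{(p,g)} \leteq V_p \cdot g^{-1}$; its total space $W \subs Q \cross G$ is still defined over $\bar\Q$, because multiplication on $G$ is. This reduces the problem to controlling atypical components of intersections $W_q \cap T$ with algebraic \emph{subgroups} $T$ of $G$, with the shift by $g$ ultimately absorbed into the functions $c_i(p,g)$ that appear in the conclusion.

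\textbf{Step 2 (parameters to a single variety).} I would now prove, by Noetherian induction on $\dim Q$, that a finite collection $\HV$ of proper algebraic subgroups of $G$ suffices to cover all atypical components of $W_q \cap T$ as $q$ ranges over $Q$. The base case $\dim Q = 0$ is a finite union of applications of (CIT) to the finitely many $\bar\Q$-varieties $W_q$. For the inductive step, decompose $Q$ into irreducible components and, on each, consider the generic fibre $W_\eta$ of $W \to Q$ over the generic point $\eta$. Although $W_\eta$ is only defined over the function field $\bar\Q(Q)$, every algebraic subgroup of $\gm^n$ is already defined over $\Q$, so (CIT) --- applied to a $\bar\Q$-model of $W_\eta$ obtained by clearing parameters --- yields a finite collection $\mathcal{H}_\eta$ of subgroups of $G$ capturing the atypical components of $W_\eta \cap T$. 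A constructibility argument then spreads this control to a dense open $Q^\circ \subs Q$, while the proper closed complement $Q \minus Q^\circ$ has strictly smaller dimension and is handled by the inductive hypothesis, applied to the restricted family $W|_{Q \minus Q^\circ}$.

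\textbf{Main obstacle.} The delicate point is the spreading-out in Step 2. Fibre dimensions of the projection $W \to Q$ generically take one value but may jump up on proper closed subsets of $Q$; consequently, an atypical component of a special fibre $W_q \cap T$ need not be the specialisation of an atypical component of the generic fibre, but may appear purely because of a dimension jump. One therefore has to stratify $Q$ so that the fibre dimension is constant on each stratum, distinguish atypical components that arise from the generic behaviour (captured by $\mathcal{H}_\eta$ via (CIT)) from those produced by dimension jumps (captured inductively on a lower-dimensional sub-family), and verify that the locus of parameters where a candidate finite collection of subgroups \emph{fails} to cover some atypical component is itself a proper closed subvariety. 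Keeping this stratification and the accompanying dimension bookkeeping consistent is the technical heart of the argument, and is precisely what makes (CIT${}_{param}$) look strictly stronger than (CIT) even though they are equivalent.
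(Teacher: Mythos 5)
The paper itself gives no proof of this proposition; it is cited directly from Zilber's paper, so I am comparing your argument against what is known of Zilber's proof rather than against anything in the text.

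Your Step 1 (translating cosets to subgroups and absorbing the translation into the parameter space) is sound: since translation by $g^{-1}$ preserves dimensions of all the relevant varieties, $X$ is an atypical component of $V_p\cap Tg$ exactly when $Xg^{-1}$ is one of $(V_p g^{-1})\cap T$, and enlarging $P$ to $P\cross G$ is a legitimate reduction. But Step 2 has a genuine gap at the phrase ``(CIT) --- applied to a $\bar\Q$-model of $W_\eta$ obtained by clearing parameters.'' The hypothesis of (CIT) is that the subvariety of $\gm^n$ is defined over $\bar\Q$. The generic fibre $W_\eta$ is defined over the function field $\bar\Q(Q)$, which (when $\dim Q>0$) has positive transcendence degree over $\bar\Q$, so $W_\eta$ admits no $\bar\Q$-model as a subvariety of $\gm^n$. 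There is no move of ``clearing parameters'' that produces one; if there were, (CIT) and (CIT${}_{param}$) would be trivially equivalent and the proposition would have no content. So the base of your Noetherian induction is fine, but the inductive step as written never legitimately invokes (CIT) on the generic fibre, and the spreading-out argument has nothing to spread.

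The way to repair this, and the route Zilber actually takes, is not to apply (CIT) fibrewise but to apply it once to the \emph{total space}. One encodes the parameter variety into an ambient torus (e.g.\ covering $P$ by pieces embedded in some $\gm^m$, which is where the ``defined over $\bar\Q$'' hypothesis on the family is used), views $W\subs \gm^n\cross\gm^m$ as a single $\bar\Q$-variety in $\gm^{n+m}$, applies (CIT) there to obtain finitely many subtori of $\gm^{n+m}$, and then projects and does careful dimension bookkeeping to extract the finite collection $\HV$ of subgroups of $G=\gm^n$ and the finitely many coset functions $c_i$. The dimension-jump phenomenon you flag in your ``main obstacle'' paragraph is indeed the real technical issue in this projection step, and it is where the induction enters; but the starting point has to be (CIT) applied to the $\bar\Q$-variety $W$ upstairs, not to the non-$\bar\Q$ generic fibre. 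As it stands, your proposal correctly identifies the shape of the difficulty but the central application of (CIT) is invalid.
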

It is not clear that the converse implication holds.

A weak version of the CIT (with parameters), not giving the natural
number $t$, is a theorem. Proofs are given in \cite{Zilber02esesc} and
\cite{DPhil}.
\begin{theorem}[Weak CIT]
  Let $P$ be a complex algebraic variety and let $(V_p)_{p \in P}$ be
  a parametric family of subvarieties of $G = \gm^n(\C)$, the family
  defined over $\bar{\Q}$.

  Then there is a finite family $\HV$ of proper algebraic subgroups of
  $G$ such that, for any coset $\kappa = a \oplus T$ of any algebraic
  subgroup $T$ of $G$ and any $p \in P$, if $X$ is an atypical
  component of the intersection, then there is $H \in \HV$ and $c \in
  G$ such that $X \subs H \cdot c$.
\end{theorem}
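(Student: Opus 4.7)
The plan is to deduce Weak CIT from the uniform differential Schanuel theorem (USC${}_D$) stated earlier in the excerpt, by using the analytic exponential map $\exp \colon L\gm^n(\C) \to \gm^n(\C)$ to translate algebraic atypicality on $\gm^n$ into differential atypicality on the graph $\graph \subs T\gm^n(\C)$.

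First I would reduce to irreducible $V_p$ and to connected subgroups $T$, absorbing the finite-index issues into $\HV$. Given an atypical component $X$ of $V_p \cap \kappa$ with $\kappa = a \oplus T$, I would pick a smooth point $x \in X$ and a local branch of $\log$ near $x$, and lift $X$ to an analytic component $\tilde X$ of the intersection $\graph \cap (\tilde\kappa \cross V_p)$, where $\tilde\kappa = \log(a) + LT \subs L\gm^n(\C)$ is the corresponding affine subspace. The algebraic atypicality inequality $\dim X > \dim V_p + \dim T - n$ translates by a fibre-dimension count on the two projections of $T\gm^n = L\gm^n \cross \gm^n$ into a differential atypicality condition on $\tilde X$ of the form hypothesised by (USC${}_D$). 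Applying (USC${}_D$) to the parametric family obtained by combining $(V_p)_{p \in P}$ with a chosen algebraic parametrization of the cosets $\kappa$ then yields a finite set $\HV$ of proper algebraic subgroups of $\gm^n$ such that $\tilde X$, and hence $X$, lies in a coset of $TH$ for some $H \in \HV$; projecting to $\gm^n$ gives $X \subs H \cdot c$ as required.

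The main obstacle is the parametrization of the cosets $\kappa$: the collection of all algebraic subgroups of $\gm^n$ is not itself an algebraic variety, being a countably infinite union of Grassmannian-like pieces indexed by saturated sublattices of $\Z^n$. I would handle this by stratifying according to $\codim T$. The atypicality inequality forces $\codim T \le \dim V_p$, so only finitely many codimensions arise. Within each codimension $c$, connected subgroups of $\gm^n$ are parametrized by the integer points of a Grassmannian $\mathrm{Gr}(n-c,n)$; although this is an infinite set, one can argue, either by model-theoretic compactness in $\mathrm{ACF}_0$ or by a height-function argument in the style of Bombieri-Masser-Zannier, that only finitely many such subgroups actually produce atypical components for a given family $(V_p)$. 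This finiteness is the key step and is where the real work lies; once it is established, the union of the finitely many $\HV^{(c)}$ over the relevant codimensions furnishes the required $\HV$.
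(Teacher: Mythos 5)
Your central claim, ``one can argue~\ldots that only finitely many such subgroups actually produce atypical components for a given family $(V_p)$,'' is false, and since you yourself identify this as ``the key step\ldots where the real work lies,'' the gap is fatal to the proposal. Take $n=3$, $V=\{(x,y,z)\in\gm^3 : z=1\}$, and for each $k\in\Z$ the one-dimensional subgroup $T_k=\{(t,t^k,1):t\in\gm\}$. Then $T_k\subs V$, so $V\cap T_k=T_k$ has dimension $1$, whereas the typical dimension is $\dim V+\dim T_k-3=0$; every $T_k$ produces an atypical component, and the $T_k$ are pairwise distinct and mutually incomparable. What Weak CIT actually asserts is only that these atypical components are all contained in cosets of some finite family $\HV$ (here $\HV=\{V\}$ works, since $V$ is itself a subgroup) --- not that finitely many $T$ are involved. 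So the statement you would need at this step is precisely the conclusion of Weak CIT, and neither ``compactness in $\ACF_0$'' nor a BMZ-style height bound is offered as anything more than a pointer; the argument is circular at exactly the point where the theorem's content lives.

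There is also a softer circularity concern you should address: (USC${}_D$), the uniform Ax--Schanuel statement you propose to invoke, is proved in the same source (\cite{DPhil}) as Weak CIT, and the uniformity in $\HV$ in (USC${}_D$) is normally \emph{obtained from} Weak CIT (or a closely related Mordell--Lang-type finiteness for $\gm^n$), not the other way around. The published proofs of Weak CIT do invoke Ax's theorem, but the non-uniform version (SC${}_D$); the finiteness of $\HV$ is then extracted by a compactness/ultraproduct argument in which the subgroup data become \emph{nonstandard} (defined by nonstandard integer exponents), and the heart of the matter is controlling these nonstandard subgroups. Your outline skips this entirely by attempting to fold the subgroups into an algebraic parameter space, which, as you note, cannot be done directly, and your proposed repair does not work. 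The local-log lift and the translation of algebraic to differential atypicality in your first two steps are in the right spirit, but the proposal does not contain a proof of the theorem.
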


\begin{prop}[{\cite[Proposition 5]{Zilber02esesc}}]
  (SC) $\&$ (CIT) $\implies$ (USC)
\end{prop}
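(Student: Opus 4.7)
The plan is to use (SC) pointwise to force every $(a, e^a) \in V$ to satisfy $\Q$-linear relations on the $a_i$, and then to use (CIT) to ensure that the resulting proper algebraic subgroups of $\gm^n$ come from a finite list depending only on $V$. Let $V \subseteq T\gm(\C)^n$ be irreducible, defined over $\Q$, with $d := \dim V < n$, and form $V_2 := \overline{\pi_2(V)} \subseteq \gm^n$. Since every fiber of $\pi_2|_V$ is contained in a discrete translate $a + 2\pi i \Z^n$, we have $\dim V_2 = d$.

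First I apply (SC) pointwise. For any $(a, e^a) \in V$, the predimension form of (SC) yields $\Qldim(a) \le \td_\Q(a, e^a) \le d$, so the relation lattice $L_a := \{m \in \Z^n : \sum_i m_i a_i = 0\}$ has rank at least $n - d \ge 1$, and defines a proper connected algebraic subgroup $H_a$ of $\gm^n$ of codimension $\ge n - d$ satisfying $a \in LH_a$ and $e^a \in H_a$. Second, I apply (CIT) to $V_2$ to obtain a finite family $\mathcal{H}_{V_2}$ of proper algebraic subgroups of $\gm^n$ such that every atypical component of $V_2 \cap T$, for any algebraic subgroup $T$, is contained in some member of $\mathcal{H}_{V_2}$. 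Given $(a, e^a) \in V$, let $T_a$ be the smallest algebraic subgroup containing $e^a$; then $T_a \subseteq H_a$, so $\dim T_a \le d$, and the irreducible component $X_a$ of $V_2 \cap T_a$ through $e^a$ has expected dimension $d + \dim T_a - n \le 2d - n$. If $X_a$ is atypical, (CIT) immediately places $e^a$ in some $H \in \mathcal{H}_{V_2}$.

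The main obstacle is the typical case, when $\dim X_a = d + \dim T_a - n \ge 0$ and (CIT) alone imposes no direct constraint on $e^a$. My approach is to descend to the subtorus $T_a$: view $V_2 \cap T_a$ as a subvariety of $T_a \iso \gm^{\dim T_a}$ of relative codimension $n - d > 0$, and apply the inductive hypothesis (on the ambient dimension $n$) together with the parametric form of (CIT), which follows from (CIT) by the earlier proposition. The parametric version is essential because $T_a$ varies in a parametric family of subtori as $(a, e^a)$ varies over $V$. Accumulating the finite collections produced at each stage yields a global finite family $\HV$. Finally, the codimension dichotomy in (USC) is almost formal: when $H \in \HV$ has codimension one, cut out by a single character $\prod x_i^{m_i} = 1$, the relation $\sum m_i a_i \in 2\pi i \Z$ afforded by $e^a \in H$ strengthens to $\sum m_i a_i = 0$ directly from (SC) (since $(m_i) \in L_a$), giving the required $a \in LH$; when $\codim H \ge 2$, the weaker condition $a \in LH + 2\pi i \Z^n$ is all that (USC) requires, and it follows from $e^a \in H^0$.
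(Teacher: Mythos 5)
Your overall strategy --- apply (SC) pointwise to show each $a$ satisfies many $\Q$-linear relations, then invoke (CIT) to confine the resulting subgroups to a finite list --- is the right shape, and the paper gives no proof of its own to compare against (it cites Zilber). But two steps that you flag as delicate are in fact genuine gaps.

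The typical case is not resolved. You propose to descend to $T_a$ and ``apply the inductive hypothesis \ldots together with the parametric form of (CIT),'' on the grounds that ``$T_a$ varies in a parametric family of subtori.'' It does not. Connected algebraic subgroups of $\gm^n$ of a fixed dimension $m$ correspond to saturated rank-$(n-m)$ sublattices of $\Z^n$: a countably infinite discrete collection, not the fibres of any algebraic family over a variety $P$. The parametric (CIT) packages a finite-dimensional family of \emph{ambient subvarieties} $V_p$ and still quantifies over \emph{all} subgroups; it does not hand you a parametric family of subgroups, which is exactly what your descent would need. Proving that the relevant $T_a$ actually come from a finite list is the content of the theorem, and the induction as you have set it up does not produce it. (Also, the equality $\dim V_2 = d$ is unjustified: your ``discrete fibre'' observation only applies to points of $V$ lying on $\graph$, not to the algebraic fibres of $\pi_2|_V$; but only $\dim V_2 \le d$ is ever used.)

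The codimension dichotomy is not ``almost formal.'' For $H$ of codimension one defined by $\prod x_i^{m_i}=1$, you claim the relation $\sum m_i a_i \in 2\pi i\Z$ ``strengthens to $\sum m_i a_i = 0$ directly from (SC) (since $(m_i) \in L_a$).'' This is circular: $(m_i) \in L_a$ \emph{means} $\sum m_i a_i = 0$, which is precisely what needs to be proved. The group $H$ is produced by (CIT) applied to $V_2$, so the only information available is $e^a \in H$, i.e.\ $\sum m_i a_i \in 2\pi i\Z$; there is no reason for the integer on the right to vanish (take $a_1 = 2\pi i$ and $m = (1,0,\ldots,0)$). What (SC) actually gives is the existence of \emph{some} rank-$(n-d)$ lattice of exact relations; the substance of (USC) is that a finite set $\HV$ can be arranged so that one of its codimension-one members already has $a$ exactly in its Lie algebra, or else a member of codimension $\ge 2$ works modulo the kernel. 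That requires an argument, not a remark at the end.
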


\section{Statements for other algebraic groups}

Schanuel's conjecture is a statement about the exponential map of
$\gm$. Since we have several variables, it is more properly seen as a
statement about the exponential maps of the \emph{algebraic tori}, the
algebraic groups of the form $\gm^n$. We can replace the torus by any
(complex) commutative algebraic group, and still have an exponential
map which is a group homomorphism.

Let $S$ be a complex commutative algebraic group, $n$ its dimension,
let $LS \iso \ga^n$ be the tangent space at the identity (the Lie
algebra) and $TS = LS \cross S$ the tangent bundle. The exponential
map $\exp_S$ is a group homomorphism $\ra{LS}{\exp_S}{S}$, and so its
graph $\graph_S$ is an analytic subgroup of $TS$.

In characteristic zero, there is a fairly simple description of
commutative algebraic groups. Any connected such group $S$ has a
Chevalley decomposition, a short exact sequence
\[0 \to \ga^r \cross \gm^l \into S \onto A \to 0\]
where $r,l \in \N$ and $A$ is an abelian variety.

\begin{itemize}
\item Vector groups:  $l=0$, $A = 0$
\item Tori: $r = 0$, $A = 0$
\item Elliptic curves: $r=0$, $l=0$, $\dim A = 1$
\item Abelian varieties: $r=0$, $l=0$
\item Semiabelian varieties: $r=0$
\item Groups with no vector quotient (nvq-groups): the extension by
  $\ga^r$ does not split. In this case, $r \le \dim A$.
\end{itemize}

There are inclusions \\
Elliptic curves $\subs$ Abelian varieties $\subs$ Semiabelian
varieties $\subs$ nvq-groups \\
and also Tori $\subs$ Semiabelian varieties.
However, the intersection between Tori and Abelian varieties is only
the trivial group, and likewise the intersection between Vector groups
and nvq-groups.

If $S$ is a vector group, $LS = S$ and the exponential map is the
identity map on $S$. Thus there is no interesting statement for vector
groups.

The group $S$ may not be defined over $\Q$, and the arithmetic of such
groups is more complicated than that of $\gm$ involving periods,
quasiperiods and so on, so formulating a ``correct'' equivalent of
Schanuel's conjecture is much more difficult. 

Bertolin \cite{Bertolin02} gives a version where $S$ is a product of a torus and
elliptic curves, and shows that it follows from a conjecture on the
periods of 1-motives by Grothendieck and Andr\'e.

The parametric versions are easier to adapt. We assume that $S$ has no
vector quotients.

\begin{conj}[(SC${}_{param}^{nvq}$)]
  Let $V \subs TS$ be an algebraic subvariety with $\dim V < \dim S$.
  Then there is a finitely generated subfield $k$ of $\C$ such that if
  $(a,\exp_S(a)) \in V$ then there is a proper algebraic subgroup $H$
  of $S$ and $c \in LS(k)$ such that $a \in c \cdot LH$.
\end{conj}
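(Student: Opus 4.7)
The plan is to mimic the proof that (SC) implies (SC${}_{param}$) for tori, substituting the commutative algebraic group $S$ for $\gm^n$. The two essential inputs should be: (i) a Schanuel-type conjecture for $S$, asserting that for any $a \in LS(\C)$ the transcendence degree of $(a, \exp_S(a))$ is at least the $\Q$-linear dimension of $a$ modulo $\ker(\exp_S)$ (with an appropriate correction by periods); and (ii) a weak CIT for $S$, controlling atypical components of intersections of $V$ with algebraic subgroups of $TS$.

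First I would fix a finitely generated field $k_1 \subs \C$ containing a field of definition of $V$ together with a finite $\Z$-basis of $\ker(\exp_S) \subs LS(\C)$. The nvq hypothesis is essential here: it rules out a unipotent quotient of $S$, which ensures that the period lattice has finite $\Z$-rank and introduces no transcendental quasi-period contributions that would prevent $k_1$ from being finitely generated. For any $(a, \exp_S(a)) \in V$, the assumption $\dim V < \dim S$ gives $\td(a, \exp_S(a)/k_1) < \dim S$, and the Schanuel conjecture for $S$ then forces the $\Q$-linear dimension of $a$ modulo $LS(k_1)$ to drop, placing $a$ in a coset $c \cdot LH$ for some proper algebraic subgroup $H \le S$. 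A weak CIT for $S$ provides a finite list $\HV$ of such subgroups depending only on $V$.

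The subtlest step is to choose $c$ in $LS(k)$ for a single finitely generated field $k \supseteq k_1$ that works uniformly in $a$. For each $H \in \HV$, one views $V \cap \graph_S$ modulo $LH$ as a constructible subset of $(LS/LH) \cross S$; its projection to $LS/LH$ has at most finitely many irreducible components that contribute an atypical intersection (by Noetherianity of $V$ and a fibre-dimension argument on $V$), and each such component has a representative lying in a finite extension of $k_1$ by finitely many further complex numbers. Collecting these representatives over all $H \in \HV$ yields a single finitely generated $k$ doing the job.

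The main obstacle is formulating and applying the correct Schanuel conjecture for $S$ in the non-split case where $S$ is a genuine extension of an abelian variety by $\ga^r$. For tori the arithmetic is controlled by the single period $2\pi i$, and for semiabelian varieties Bertolin~\cite{Bertolin02} derives the relevant period conjecture from Grothendieck--Andr\'e; but for the general nvq case one needs both a Schanuel statement that correctly handles the unipotent extension data and a genuinely parametric CIT for $S$, since the weak pointwise CIT used above does not by itself control how $c$ varies with $a$.
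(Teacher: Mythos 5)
The statement you were asked about is labelled a \emph{conjecture} in the paper, not a theorem, and the paper gives no proof of it (the author's stated convention is that conjectures are unproved, theorems are unconditional, and propositions are proved implications between conjectures). Immediately after the statement the paper only remarks that ``A statement of the full conjecture would at least involve specifying the field $k$ in the case where $V$ is defined over $\Q$,'' and earlier notes that the group $S$ may not be defined over $\Q$ and involves periods and quasi-periods, making even the formulation of a correct Schanuel statement for general nvq groups difficult. So there is no paper proof to compare against.

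Your sketch is, as you yourself say, a conditional reduction: it mimics Zilber's torus-case implication (SC) $\implies$ (SC${}_{param}$) by feeding in (i) a hypothetical Schanuel conjecture for $S$ and (ii) a CIT for $S$, neither of which is formulated in the paper for general nvq groups. As a heuristic for \emph{why} one believes the conjecture this is reasonable and matches the spirit of the surrounding text, but it is not a proof and does not correspond to anything the paper carries out. A small technical slip: the nvq hypothesis is not what ensures the period lattice of $S$ is of finite $\Z$-rank --- the kernel of $\exp_S$ is discrete in the finite-dimensional $\C$-vector space $LS$ for \emph{any} connected complex commutative algebraic group, so it is automatically of finite rank. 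The role of the no-vector-quotient assumption is rather to exclude $\ga$-factors on which $\exp_S$ is the identity (where the statement degenerates) and to constrain the subgroup structure; it is not about finiteness of the period lattice or quasi-period transcendence per se.
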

A statement of the full conjecture would at least involve specifying
the field $k$ in the case where $V$ is defined over $\Q$.

\begin{conj}[(USC${}_{param}^{nvq}$)]
  Let $V \subs TS$ be an algebraic subvariety with $\dim V < \dim
  S$. There is a finite collection $\mathcal{K}_V$ of cosets of the
  form $c \cdot TH$ where $H$ is a proper algebraic subgroup of $S$
  such that $\graph_S \cap V \subs \bigcup \mathcal{K}_V$.
\end{conj}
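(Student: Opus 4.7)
The plan is to mirror the derivation of (USC) from (SC) and (CIT) given in section~7 (the proposition that (SC) combined with (CIT) implies (USC)), replacing the torus $\gm^n$ by the nvq-group $S$. Concretely I would combine a Schanuel-type input, provided by the conjectured (SC${}_{param}^{nvq}$), with an appropriate Zilber--Pink / CIT analog for the ambient bundle $TS$ applied to subgroups of the form $TH$ with $H \le S$ proper.

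First I would fix $V \subs TS$ with $\dim V < \dim S$ and pick an arbitrary point $(a,\exp_S(a)) \in \graph_S \cap V$. By (SC${}_{param}^{nvq}$) there is a finitely generated $k \subs \C$ depending only on $V$, a proper algebraic subgroup $H \le S$, and $c \in LS(k)$ with $a \in c \cdot LH$. Since $\exp_S$ is a homomorphism, this upgrades at once to $(a,\exp_S(a)) \in (c,\exp_S(c)) \cdot TH$, so every point of $\graph_S \cap V$ already sits on a coset of the required shape. The remaining task is to show that only finitely many pairs $(H, \text{coset})$ are needed to cover the whole intersection as $V$ is fixed.

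Next I would use the hypothesis $\dim V < \dim S$: the expected dimension of $\graph_S \cap V$ inside $TS$ is $\dim \graph_S + \dim V - \dim TS = \dim V - \dim S < 0$, so every component of this intersection is atypical in the sense of the CIT definition, and the cosets $c \cdot TH$ produced in the previous step exhibit this atypicality. The required finiteness of $\mathcal{K}_V$ is then precisely the content of a suitable weak Zilber--Pink / CIT statement for cosets of subgroups of the shape $TH \le TS$, applied to the family of slices of $V$. Granting such a statement, the finite list of admissible $H$, together with the (parametric) control on the coset representatives $c$ coming from (SC${}_{param}^{nvq}$), assembles into $\mathcal{K}_V$ exactly as in the derivation of (USC) from (SC) and (CIT).

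The hardest step, and the main obstacle to an unconditional proof, is supplying this Zilber--Pink statement for $TS$. In the pure torus case $S = \gm^n$ this is exactly (CIT), itself conjectural; for $S$ semi-abelian it is the semi-abelian Zilber--Pink conjecture, of which only a weak form analogous to the Weak CIT is currently known; and once $S$ has a non-split vector part (the genuinely nvq range beyond semi-abelian), even the correct formulation is unsettled, because the arithmetic of $S$ then involves periods and quasiperiods of $1$-motives (cf.\ \cite{Bertolin02}). A realistic intermediate target is therefore to prove (USC${}_{param}^{nvq}$) conditionally on a weak CIT analog for $S$, in direct parallel with the conditional derivation of (USC) from (SC) and (CIT) recorded earlier.
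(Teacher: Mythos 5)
The statement you have been asked to prove is labelled \textbf{Conjecture} in the paper, not Theorem. Under the paper's explicitly stated conventions, this means it is \emph{not} an unconditionally proved result and the paper offers no proof of (USC${}_{param}^{nvq}$), so there is no argument in the source for me to compare yours against. This is not a defect in your write-up: even in the torus case $S=\gm^n$, the corresponding (USC) statement is derived only conditionally from (SC) together with the CIT (itself open, now the multiplicative Zilber--Pink conjecture), and for genuinely nvq groups with a non-split vector part the paper itself remarks that even the correct arithmetic formulation of the Schanuel-type conjecture is unclear. You appear to recognise all this, and your proposal is sensibly pitched as a conditional reduction; I will review it on those terms.

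As a conditional derivation the outline is reasonable and in the spirit the paper seems to intend, but two points need care. First, the CIT as stated in the paper (and its notion of typical/atypical component) concerns intersections of \emph{algebraic} subvarieties with \emph{algebraic} subgroups of $\gm^n$ (or a semiabelian $S$); $\graph_S$ is only an analytic subvariety of $TS$, so writing ``every component of $\graph_S \cap V$ is atypical in the sense of the CIT definition'' conflates the algebraic and analytic settings. The correct reference point in the paper is the analytic corollary {\cite[Theorem 8.1]{DPhil}}, which is a theorem and does apply here, since $\dim W + \dim S - \dim V > 0$ automatically when $\dim V < \dim S$. Second, and more substantively, that theorem yields a \emph{finite} list $\HV$ of subgroups $H$ but lets the coset representative $g$ range over all of $TS(\C)$; it does not give a finite set of cosets. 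Your appeal to $c \in LS(k)$ with $k$ finitely generated does not repair this, since $LS(k)$ is still infinite. Pinning the representatives down to a finite set is precisely the part for which a genuinely new CIT-type input is required, and that is exactly the open part of the conjecture. You should state explicitly which auxiliary conjecture you are assuming, and note that the elementary upgrade from $a \in c\cdot LH$ to $(a,\exp_S(a)) \in (c,\exp_S(c))\cdot TH$ uses $\exp_S(LH) \subs H$, which is fine for connected $H$ and should be said.
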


The analogues of (SC${}_D$) and (USC${}_D$) hold here too. We give the
stronger statement.

As before, let $F$ be a field of characteristic zero, $\Delta$ a set
of derivations on $F$, and $C$ the intersection of their constant
subfields. For each $D \in \Delta$, let $\logD_S$ be the logarithmic
derivative of $S$ with respect to $D$, and similarly $\logD_{LS}$. Let 
\[\Gamma_{S,\Delta} = \class{(x,y) \in (LS \cross S)(F)}{\logD_S(y) =
  \logD_{LS}(x) \mbox{ for all }D \in \Delta}.\] 
Let $S$ be an nvq-group defined over $C$.

\begin{theorem}[(USC${}_D^{nvq}$)]
  For each parametric family $(V_c)_{c \in P(C)}$ of subvarieties of
  $TS$, with $V_c$ defined over $\Q(c)$, there is a finite set $\HV$
  of proper algebraic subgroups of $S$ such that for each $c \in P(C)$
  and each $(x,y) \in \Gamma^n \cap V_c$, if $\dim V_c - \rk\Jac(x,y)
  < \dim S$, then there is $\gamma \in TS(C)$ and $H \in \HV$ such
  that $(x,y)$ lies in the coset $\gamma \cdot TH$.
\end{theorem}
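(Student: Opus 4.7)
The plan is to follow the strategy used to establish $(USC_D)$ for $\gm^n$ in Kirby's thesis, replacing $\gm^n$ throughout by the nvq-group $S$ and supplying the appropriate replacement tools at each step. The two ingredients to combine are a pointwise differential Schanuel statement for $S$ (the nvq-analogue of Ax's theorem) and a Weak CIT statement for $S$ (the analogue, for commutative algebraic groups, of the Weak CIT for tori stated earlier in the paper). The compactness machinery that bridges them is formal.

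First, I would establish a pointwise (SC${}_D^{nvq}$): for a single subvariety $V \subs TS$ and any $(x,y) \in \Gamma_{S,\Delta} \cap V$ with $\td_C(x,y) - \rk\Jac(x,y) < \dim S$, the point lies in a coset $\gamma \cdot TH$ for some proper algebraic subgroup $H \le S$ and some $\gamma \in TS(C)$. Using the Chevalley decomposition $0 \to \ga^r \cross \gm^l \into S \onto A \to 0$ and the nvq assumption (so that the $\ga^r$ factor is non-split over the abelian part), this reduces to Ax-type statements for semiabelian varieties together with an analysis of the extension data; the nvq hypothesis is precisely what rules out the conclusion degenerating via a vector quotient to the trivial coset $\gamma \cdot TS$.

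Second, to pass from pointwise to uniform, I would run a compactness argument in a sufficiently saturated differential field extending $C$. Suppose, aiming for a contradiction, that no finite collection $\HV$ suffices. Then for each finite list $H_1, \ldots, H_k$ of proper algebraic subgroups of $S$ there exist $c \in P(C)$ and $(x,y) \in \Gamma \cap V_c$ witnessing the Schanuel inequality failure but not lying in any coset $\gamma \cdot T H_i$ with $\gamma \in TS(C)$. Compactness then yields a type producing a parametric family of atypical intersections $\graph_S \cap (V_c \cap \gamma \cdot TH)$ whose "source" subgroups $H$ must be infinite in number up to the action of $TS(C)$. The Weak CIT for $S$ applied to the parametric family obtained by augmenting $(V_c)$ with translates of the $TH$ appearing in the pointwise analysis absorbs all atypical components into a single finite list $\HV$ of proper algebraic subgroups, contradicting the supposition; this finite $\HV$ is then the required list.

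The main obstacle will be the correct formulation and proof of Weak CIT for an nvq-group $S$. For $\gm^n$ the argument uses the combinatorial structure of subtori and tropicalisation, but the subgroup structure of an nvq-group is considerably richer: one must control cosets of subgroups arising from both the semiabelian quotient and the non-split vector extension, and ensure that the "atypicality" condition $\dim X > \dim V + \dim T - \dim S$ is preserved under the reduction steps. A subsidiary difficulty is a descent issue: the pointwise theorem in a saturated model produces constants $\gamma$ in the constant field of that model, and one must arrange, via a choice of witnesses and an application of the fact that $TS$ and $\Gamma_{S,\Delta}$ are defined over $C$, that the constants may be taken in $TS(C)$ as required by the statement.
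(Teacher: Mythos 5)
The paper's own ``proof'' is a citation: the semiabelian case is proved in \cite{TEDESV}, and the paper asserts that the argument there adapts to nvq-groups (a result first proved by Bertrand). So there is no detailed proof in the paper to match against, but your outline can still be assessed on its own terms, and in spirit it does follow the \cite{DPhil}/\cite{TEDESV} route of combining a pointwise differential Schanuel theorem with a Weak-CIT-type finiteness statement via compactness in a saturated differentially closed field.

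That said, there is one genuine gap and one inefficiency. The genuine gap is exactly the one you name and then leave open: a Weak CIT for nvq-groups. The paper states Weak CIT only for tori and for semiabelian varieties (the latter proved in \cite{TEDESV}); extending it to nvq-groups is not a routine adaptation, because the subgroup lattice of $S$ now contains vector subgroups of the $\ga^r$-part, and one has to see that the non-splitting hypothesis forces the relevant subgroups into a countable, effectively bounded family (this is where the nvq hypothesis really earns its keep---it ensures $\End(S)$-rigidity of the subgroup lattice, which is what makes a finite $\HV$ possible at all). Your remark that nvq merely ``rules out the conclusion degenerating via a vector quotient to the trivial coset $\gamma \cdot TS$'' misidentifies the role of the hypothesis: it is needed for the uniformity/finiteness of $\HV$, not just for non-triviality of the coset. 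Without a proof of this Weak CIT (or a direct rigidity lemma playing the same role) the compactness argument does not close, so the proposal as written does not constitute a proof.

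The inefficiency is in your first step: the pointwise (SC${}_D^{nvq}$) need not be rederived by unwinding the Chevalley decomposition and reducing to the semiabelian Ax--Schanuel. Ax already proved the pointwise differential Schanuel statement for arbitrary complex commutative algebraic groups in \cite{Ax72a}, which the paper explicitly cites; you can simply invoke that theorem. The real work in moving from semiabelian to nvq is entirely in the uniformity step, not the pointwise one, so investing effort in re-proving pointwise Ax--Schanuel is misdirected. The descent of the constants $\gamma$ from the saturated model to $TS(C)$ that you flag as a ``subsidiary difficulty'' is indeed a point that needs care, but it is the standard issue in all versions of this argument and handled the same way as in the torus and semiabelian cases.
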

When $S$ is semiabelian, this is proved in \cite{TEDESV}. The proof
there also adapts to the nvq-group case, which was first proved by
Daniel Bertrand.

As in the torus case, there is the following analytic corollary. As
with CIT, it is a statement about atypical intersections.
\begin{theorem}[{\cite[Theorem 8.1]{DPhil}}]
  Let $P$ be an algebraic variety and $(V_c)_{c \in P(\C)}$ be a
  parametric family of algebraic subvarieties of $TS$. There is a
  finite collection $\HV$ of proper algebraic subgroups of $S$ with
  the following property:

  If $c \in P(\C)$ and $W$ is a connected component of the analytic
  variety $\graph_S \cap V_c$ with analytic dimension $\dim W$
  satisfying $\dim W + \dim S - \dim V_c = t > 0$, then there is $H
  \in \HV$ of codimension at least $t$ and $g \in TS(\C)$ such that
  $W$ is contained in the coset $g \cdot TH$.
\end{theorem}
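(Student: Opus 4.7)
The plan is to deduce this analytic statement from the differential-algebraic theorem (USC${}_D^{nvq}$) just above, by producing, from each analytic component $W$, a tuple in $\Gamma_{S,\Delta}$ to which that theorem applies. Take $\HV$ to be the finite collection of subgroups produced by (USC${}_D^{nvq}$) for the very same parametric family $(V_c)_{c\in P(\C)}$ (with $C=\C$ and the parameter variety $P$), suitably enlarged at the end to handle the codimension bound.

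Fix $c \in P(\C)$ and a component $W$ as in the hypothesis, with $d = \dim W$. Choose a smooth point of $W$ and, on a small polydisc $U \subs \C^d$, a holomorphic parametrization $\phi : U \to TS(\C)$ whose image is an open subset of $W$. Let $F$ be the field of germs of meromorphic functions on $U$ at the origin, equipped with the derivations $\Delta = \{\partial/\partial t_j\}_{j=1}^{d}$ whose common constant field is $\C$. The components of $\phi$ give a point $(x,y) \in (LS \cross S)(F)$ lying in $V_c$, and the inclusion $\phi(U) \subs \graph_S$ is precisely the system of logarithmic-derivative equations defining $\Gamma_{S,\Delta}$, so $(x,y) \in \Gamma_{S,\Delta} \cap V_c$. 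The matrix $\Jac(x,y)$ with respect to $\Delta$ has rank equal to the generic rank of $d\phi$, which is $d$. Hence $\dim V_c - \rk\Jac(x,y) = \dim V_c - d = \dim S - t < \dim S$, and (USC${}_D^{nvq}$) produces $H \in \HV$ and $\gamma \in TS(\C)$ with $(x,y) \in \gamma \cdot TH$. Since $\gamma \cdot TH$ is Zariski closed, $\phi(U) \subs \gamma \cdot TH$; as $W$ is connected and $\phi(U)$ is an open subset, the identity principle gives $W \subs g \cdot TH$ with $g = \gamma$.

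For the codimension estimate $\codim H \ge t$, argue by descent: if the $H$ produced has codimension $s < t$, translate by $\gamma$ and work inside $TH$. Then $W - \gamma$ is an analytic component of $\graph_H \cap V'_c$, where $V'_c = (V_c - \gamma) \cap TH$, and since $\dim V'_c \le \dim V_c$ we get $\dim W + \dim H - \dim V'_c \ge t - s > 0$. The same construction applied inside the nvq-group $H$ (which is again nvq, being an algebraic subgroup of $S$) yields a strictly smaller subgroup; the process terminates after finitely many steps because $\dim S$ is finite. Close $\HV$ under this finite descent operation to obtain the desired finite set. The main obstacle, and the only nontrivial point beyond bookkeeping, is the uniformity of $\HV$ in the parameter $c \in P(\C)$ and across all components $W$: this is precisely what the parametric differential-algebraic theorem (USC${}_D^{nvq}$) delivers, and it is the reason one must formulate and prove the parametric statement rather than a pointwise one.
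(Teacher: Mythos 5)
Your overall strategy is the right one and is the one the paper uses: parametrize $W$ holomorphically near a smooth point, interpret the components as elements of a differential field of germs with $\Delta=\{\partial/\partial t_j\}$ and constant field $\C$, observe that $\exp_S\circ x = y$ becomes the logarithmic-derivative condition defining $\Gamma_{S,\Delta}$, check that $\rk\Jac(x,y)=\dim W$, apply the uniform differential statement, and then pass back to $W$ via Zariski closure. That correctly reduces the analytic statement to (USC${}_D^{nvq}$), and the role of the parametric family in guaranteeing uniformity of $\HV$ is identified correctly.

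There are, however, two genuine gaps. First, the descent you use to get $\codim H \ge t$ rests on the claim that an algebraic subgroup of an nvq-group is again nvq, and this is false: if $S$ is a non-split extension $0 \to \ga \to S \to E \to 0$ with $E$ an elliptic curve, then $S$ is nvq but its subgroup $\ga$ is a vector group. So you cannot simply re-run (USC${}_D^{nvq}$) inside $H$. The codimension estimate should instead come directly from the transcendence-degree form of the differential Ax--Schanuel theorem for $S$: choosing the \emph{minimal} coset $\gamma\cdot TH$ (over $C$) containing $(x,y)$, Ax's inequality gives $\td_C(x,y) - \rk\Jac(x,y) \ge \dim H$; since $\td_C(x,y)\le\dim V_c$ and $\rk\Jac = d$, this yields $\dim H \le \dim V_c - d$, i.e.\ $\codim H \ge t$. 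The uniform theorem is then needed in a form which guarantees such a minimal $H$ (or at least one of that codimension) can be taken from the finite set $\HV$; the version quoted in this survey suppresses the codimension clause, so one must invoke the full statement from the source. Second, your final appeal to the identity principle is only valid if $W$ is \emph{irreducible} as an analytic set; a connected component of $\graph_S\cap V_c$ can be a union of several irreducible components meeting at points, and containment of one open piece in the Zariski-closed coset $\gamma\cdot TH$ does not propagate across an irreducible-component boundary (the union of the two coordinate axes in $\C^2$ is the standard counterexample). One needs a supplementary argument to glue the cosets obtained on the various irreducible components into a single coset of codimension at least $t$, or to work component by component.
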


James Ax proved a version of (SC${}_D$) in the context of arbitrary complex algebraic groups in \cite{Ax72a}. This theorem and the usual exponential (SC${}_D$) are currently known as Ax-Schanuel theorems.

There are many possible analogues of the ``raising to powers''
statements. Some differential fields versions have been proved. We do
not list them here.

The CIT naturally extends to semiabelian varieties, and was given in this context in \cite{Zilber02esesc}. The weak version, below, is a theorem proved in \cite{TEDESV}.

\begin{theorem}[Weak semiabelian CIT]
  Let $S$ be a complex semiabelian variety, $P$ be a complex algebraic
  variety and let $(V_p)_{p \in P}$ be a parametric family of
  subvarieties of $S$, the family defined over $\bar{\Q}$.

  Then there is a finite family $\HV$ of proper algebraic subgroups of
  $S$ such that, for any coset $\kappa = a \oplus T$ of any algebraic
  subgroup $T$ of $S$ and any $p \in P$, if $X$ is an atypical
  component of the intersection, then there is $H \in \HV$ and $c \in
  S$ such that $X \subs c \cdot H$.
\end{theorem}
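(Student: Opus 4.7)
The plan is to deduce the theorem from the already-stated analytic corollary of (USC$_D^{nvq}$), which for any algebraic family $(V_c)_{c \in P(\C)}$ of subvarieties of $TS$ produces a single finite set of proper algebraic subgroups of $S$ controlling all analytically atypical components of $\graph_S \cap V_c$. Since a semiabelian variety is in particular an nvq-group (the case $r = 0$), that corollary is available here. The task therefore becomes to package the cosets $\kappa = a \oplus T$, together with the varieties $V_p$, into a single algebraic family $Y$ inside $TS$, in such a way that an algebraic atypical component of $V_p \cap \kappa$ in $S$ lifts to an analytically atypical component of $\graph_S \cap Y$ in $TS$.

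Set $d = \dim S$. The key device is to parametrize linear subspaces of the Lie algebra $LS$ by Grassmannians, since $LT$ is a linear subspace of $LS$ for every algebraic subgroup $T \subs S$. Form the parameter space
$$P' = \bigsqcup_{k=0}^{d} \bigl( P \times LS \times \mathrm{Gr}(k, LS) \bigr),$$
and for $(p, \tilde a, L) \in P'$ define
$$Y_{(p, \tilde a, L)} := (\tilde a + L) \times V_p \;\subs\; LS \times S = TS.$$
This is a genuine algebraic family over $P'$ whose members do not mention any chosen subgroup.

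Given an atypical component $X$ of $V_p \cap \kappa$ with $\kappa = a \oplus T$, I would pick a smooth point $x \in X$, a lift $\tilde x \in \exp_S^{-1}(x)$, and set $\tilde a = \tilde x$ and $L = LT$. Local inversion of $\exp_S$ produces an analytic variety $\tilde X \subs \tilde a + L$ projecting biholomorphically onto a neighbourhood of $x$ in $X$, and its graph is an irreducible analytic component $W \subs \graph_S \cap Y_{(p, \tilde a, L)}$ with $\dim W = \dim X$. The expected intersection dimension is $\dim \graph_S + \dim Y_{(p, \tilde a, L)} - \dim TS = d + (\dim T + \dim V_p) - 2d = \dim T + \dim V_p - d$, so the hypothesis $\dim X > \dim T + \dim V_p - d$ transfers verbatim to the statement that $W$ is atypical in $TS$ with the same excess $t > 0$.

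Now apply the analytic corollary of (USC$_D^{nvq}$) to the family $(Y_{p'})_{p' \in P'}$: it outputs a finite collection $\HV$ of proper algebraic subgroups of $S$ and some $g = (g_1, g_2) \in TS$ with $W \subs g \cdot TH = (g_1 + LH) \times (g_2 \cdot H)$ for some $H \in \HV$. Projecting to the second factor gives $X \subs g_2 \cdot H$, which is the desired conclusion. The main subtlety is making the parametrization honestly algebraic in a single step, which is why all dimensions $k$ are bundled into one disjoint union of Grassmannian pieces; disconnected $T$ are handled by reducing to the identity component and absorbing the finitely many translates into the choice of $\tilde a$.
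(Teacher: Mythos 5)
Your reduction is sound and follows the route the paper clearly intends: the Weak semiabelian CIT sits in the text immediately after the analytic corollary of (USC${}_D^{nvq}$) precisely because it is deduced from it, and the reference \cite{TEDESV} proves it by this uniform Ax--Schanuel machinery. The parametrization via $P' = \bigsqcup_k (P \times LS \times \mathrm{Gr}(k,LS))$ correctly over-covers all cosets $a \oplus T$ (the full Grassmannian parametrizes more linear spaces than those of the form $LT$, but over-covering only enlarges $\HV$, which remains finite), and your dimension count showing that algebraic atypicality of $X$ in $S$ transfers exactly to analytic atypicality of the lift $W$ in $TS$ is the crux and is correct.

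One imprecision worth tightening: your local lift $W$, obtained by locally inverting $\exp_S$ over a neighbourhood of $x$ in $X$, is not literally a \emph{connected component} of $\graph_S \cap Y_{(p,\tilde a,L)}$, which is what the analytic corollary quantifies over. The fix is to take $W'$ to be the connected component containing $(\tilde x, x)$; since $W' \supseteq W$ the dimension hypothesis $\dim W' + \dim S - \dim Y > 0$ is still met, the corollary yields $W' \subseteq g \cdot TH$, this restricts to $W$, and then projecting and using that a nonempty open subset of the irreducible variety $X$ lying in the closed set $g_2 \cdot H$ forces $X \subseteq g_2 \cdot H$. Your handling of disconnected $T$ (passing to $T^\circ$ and absorbing the finitely many translates into $\tilde a$, noting $\dim T = \dim T^\circ$ so atypicality is preserved) is also correct.
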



\end{document}